\theoremstyle{plain} \textwidth=36pc \textheight=51pc
\newtheorem{theorem}{Theorem}[section]
\newtheorem{lemma}[theorem]{Lemma}
\newtheorem{example}[theorem]{Example}
\newtheorem{proposition}[theorem]{Proposition}
\newtheorem{corollary}[theorem]{Corollary}
\theoremstyle{definition}
\newtheorem{definition}[theorem]{Definition}
\newtheorem{remark}[theorem]{Remark}
\numberwithin{equation}{section}
 \DeclareMathOperator{\Ext}{Ext}
\DeclareMathOperator{\Hom}{Hom} \DeclareMathOperator{\Tor}{Tor}
\DeclareMathOperator{\e}{\epsilon}
\DeclareMathOperator{\ch }{char}
\DeclareMathOperator{\gr }{gr}
\DeclareMathOperator{\de }{d}
\DeclareMathOperator{\Der }{Der}
\DeclareMathOperator{\HP }{HP}
\DeclareMathOperator{\Sym }{Sym}
\DeclareMathOperator{\Asym }{Asym}
\begin{document}
\title{Universal enveloping algebras of Poisson Hopf algebras}
\author{Jiafeng L\"u, Xingting Wang and Guangbin Zhuang}
\address{L\"u: Department of Mathematics, Zhejiang Normal University, Jinhua 321004, China}
\email{jiafenglv@gmail.com}

\address{Wang: Department of Mathematics,
University of Washington, Seattle, Washington 98195, USA}

\email{xingting@uw.edu}

\address{Zhuang: Department of Mathematics,
University of Southern California, Los Angeles 90089-2532, USA}

\email{gzhuang@usc.edu}

\begin{abstract} 
For a Poisson algebra $A$, by exploring its relation with Lie-Rinehart algebras, we prove a Poincar\'e-Birkoff-Witt theorem for its universal enveloping algebra $A^e$. Some general properties of the universal enveloping algebras of Poisson Hopf algebras are studied. Given a Poisson Hopf algebra $B$, we give the necessary and sufficient conditions for a Poisson polynomial algebra $B[x; \alpha, \delta]_p$ to be a Poisson Hopf algebra. We also prove a structure theorem for $B^e$ when $B$ is a pointed Poisson Hopf algebra. Namely, $B^e$ is isomorphic to $B\#_\sigma \mathcal{H}(B)$, the crossed product of $B$ and $\mathcal{H}(B)$, where $\mathcal{H}(B)$ is the quotient Hopf algebra $B^e/B^eB^+$.
\end{abstract}

\subjclass[2010]{17B63, 16T05, 16S10}

\keywords{Poisson Hopf algbras, universal enveloping algebras, Lie-Rinehart algebras}

\maketitle

\medskip
\section{Introduction}

The notion of Poisson Hopf algebras arises naturally in the study of Poisson geometry and quantum groups. For example, the coordinate ring of a Poisson algebraic group is a Poisson Hopf algebra \cite[Definition 3.1.6]{KS}.
Recently, connected Hopf algebras are studied in a series of papers \cite{BOZZ, WZZ, WW, Zh1}. In \cite{Zh1}, the third named author showed that, for a connected Hopf algebra $H$, the associated graded algebra $\gr H$ with respect to the coradical filtration is always commutative. Therefore $\gr H$ carries an induced Poisson structure, which actually makes $\gr H$ into a Poisson Hopf algebra. This observation suggests, in a very rough sense, that one should think of connected Hopf algebras as deformations of Poisson Hopf algebras. While the study of Poisson Hopf algebras is interesting on its own, we certainly hope that it will help us to understand Hopf algebras in general.

For a Poisson algebra $A$, one can define its universal enveloping algebra $A^e$, which basically transfers the Poisson structure of $A$ to the algebra structure of $A^e$. Generally speaking, the algebra $A^e$ could be very complicated and highly non-commutative. For example, in \cite{U}, Umirbaev proved that the universal
enveloping algebras of the Poisson symplectic algebra $P_n$ is isomorphic to $A_n\otimes A_n^{op}$, where $A_n$ is the $n$-th Weyl algebra. If we start with a Poisson Hopf algebra $B$, then $B^e$ is actually a Hopf algebra \cite{Oh2}, whose module category is equivalent to the category of Poisson modules over $B$ as monoidal categories. Hence, from a categorical point of view, the Hopf algebra $B^e$ carries all information we need to understand the Poisson Hopf algebra $B$, which motivates us to study the structure of $B^e$. 

The paper is organized as follows. 

In Section \ref{pre} and Section \ref{example}, we briefly review some basic concepts related to Poisson Hopf algebras and give some examples. Section \ref{poly} is devoted to the study of Poisson polynomial algebras. To be specific, for a given Poisson Hopf algebra $B$, we give the necessary and sufficient conditions for a Poisson polynomial algebra $B[x; \alpha, \delta]_p$ to be a Poisson Hopf algebra. 

In Section \ref{Puniversal}, we review the definition and some basic properties of $A^e$, the enveloping algebra of a Poisson algebra $A$. A well-known result in \cite{H} states that the K\"alher differentials $\Omega_A$ of a Poisson algebra is a Lie-Rinehart algebra. By using the universal property of $A^e$ \cite{Oh2}, we show that $A^e$ is isomorphic to $V(A, \Omega_A)$, the enveloping algebra of the Lie-Rinehart algebra $\Omega_A$. 
Consequently, a Poincar\'e-Birkoff-Witt theorem for $A^e$ is derived.

In the rest of the sections we focus on the structure of $B^e$ where $B$ is a Poisson Hopf algebra. In \cite{Oh2}, it is shown that $B^e$ is Hopf algebra with $B$ as a Hopf subalgebra. By using standard techniques from Hopf algebra theory, we show in Section \ref{Hopf} that 
$B^e$ is always a free module over $B$ and an injective comodule over $\mathcal{H}(B)$, where $\mathcal{H}(B)=B^e/ B^eB^{+}$. Moreover, when $B$ is a pointed Poisson algebra, we are able to prove that 
$$B^e\cong B\#_\sigma \mathcal{H}(B)$$
as algebras, where $B\#_\sigma \mathcal{H}(B)$ is the crossed product of $B$ and $\mathcal{H}(B)$. This result involves showing that the extension $B\subset B^e$ has the normal basis property and is Galois.
In Section \ref{graded}, a more special case, where $B$ is a pointed Poisson Hopf algebra with $\{g, t\}=0$ for any $g, t\in G:=G(B)$, is considered. In this case, $C=\gr B$ inherits a Poisson structure from $B$. Also, the canonical biproduct decomposition \cite{Ra3}
$$C\cong R\#kG$$
is compatible with the Poisson structure in the sense that both $R$ and $kG$ are Poisson subalgebras of $C$. Now the structure of $C^e$ becomes rather transparent. In fact, $R^e$ is a $(kG)^e$-module algebra and one has
$$C^e\cong R^e\#(kG)^e.$$

Throughout this paper, let $k$ denote a base field. All algebras, coalgebras and tensor products are taken over $k$ unless otherwise stated.
\subsection*{Acknowledgments} The authors thank Ken Brown, Gigel Militaru, Susan Montgomery, Ualbai Umirbaev and James Zhang for reading earlier versions of this paper. The third author would also like to thank Pavel Etingof for explaining some arguments in the paper \cite{EK}. Jiafeng L\"u is supported by National Natural Science Foundation of China (No. 11001245, 11271335 and 11101288).

\medskip
\section{Preliminaries}\label{pre}
In this section we will recall some definitions and basic results.

\subsection{Coalgebras and Hopf algebras.} For the definition and basic properties of coalgebras and Hopf algebras, a comprehensive reference is \cite{Mo}. We recall some terminology used in this paper. 

For a coalgebra $C$, we denote the comultiplication and the counit by $\Delta$ and $\e$, respectively. For any $x\in C$, we write
$\Delta(x)=x_1\otimes x_2$, which is basically the Sweedler's notation with the summation sign $\Sigma$ omitted. Let $G(C)$ be the set of the group-like elements in $C$, and $C^+$ be the kernel of the counit. The {\it coradical}  $C_0$ of $C$ is defined to be the sum of all simple subcoalgebras of $C$. The coalgebra $C$ is called {\it pointed} if $C_0=kG(C)$, and {\it connected} if $C_0$ is one-dimensional. Also, we use $\{C_n\}_{n=0}^{\infty}$ to denote the coradical filtration of $C$ \cite[5.2.1]{Mo}. For a pointed Hopf algebra $H$, the coradical filtration
$\{H_n\}_{n=0}^{\infty}$ is a Hopf algebra filtration \cite[p. 62]{Mo} so that the associated graded algebra, which we denote by $\gr H$, is also a Hopf algebra.

In \cite{AC}, Andruskiewitsch-Cuadra introduced the {\it standard filtration} $\{H_{[n]}\}_{n\ge 0}$ for a Hopf algebra $H$. By definition, $H_{[0]}$ is the subalgebra generated by $H_0$ and 
$$H_{[n]}=\wedge^{n+1}H_{[0]},$$
where $\wedge$ denotes the wedge of $k$-spaces \cite[5.2]{Mo}.
The standard filtration possesses a lot of nice properties. For example, if the antipode $S$ of $H$ is injective (which is true when $H$ is commutative), then $\{H_{[n]}\}_{n\ge 0}$ is a Hopf filtration \cite[Lemma 1.1]{AC}.

\subsection{Poisson Hopf algebras}
\begin{definition} A Poisson algebra is a commutative algebra $A$ equipped with a Lie bracket $\{\cdot,\cdot\}$ such that 
$$\{a, bc\}=\{a, b\}c+b\{a, c\},$$
for any $a, b, c\in A$.
\end{definition}

\begin{proposition}\label{tensor}
Let $A$ be a Poisson algebra with a Poisson bracket $\{\cdot ,\cdot \}$. Then $A\otimes A$ becomes a Poisson algebra via the following multiplication and bracket:
\begin{align}
(a\otimes b)\cdot(c\otimes d)&:=ac\otimes bd,\\
\{a\otimes b, c\otimes d\}&:=ac\otimes \{b, d\}+\{a, c\}\otimes db.
\end{align}
\end{proposition}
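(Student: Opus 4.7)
The plan is to verify directly the four Poisson axioms for $A\otimes A$: commutativity of the product, bilinearity/antisymmetry of the bracket, the Jacobi identity, and the Leibniz rule. Commutativity of the componentwise product is immediate from the commutativity of $A$, and bilinearity of the bracket is visible from the formula, so the real content lies in antisymmetry, Jacobi, and Leibniz.

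For antisymmetry, I would expand
\[
\{a\otimes b, c\otimes d\}+\{c\otimes d, a\otimes b\}=ac\otimes\{b,d\}+\{a,c\}\otimes db+ca\otimes\{d,b\}+\{c,a\}\otimes bd,
\]
and then use commutativity of $A$ together with antisymmetry of $\{\cdot,\cdot\}$ on $A$: the first and third terms cancel because $ca=ac$ and $\{d,b\}=-\{b,d\}$, while the second and fourth cancel similarly after rewriting $db=bd$. Note that it is essential here that $A$ be commutative, since the bracket on $A\otimes A$ is not manifestly symmetric in the two tensor factors.

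For the Leibniz rule, I would expand both sides of
\[
\{a\otimes b,(c\otimes d)(e\otimes f)\}=(c\otimes d)\{a\otimes b,e\otimes f\}+\{a\otimes b,c\otimes d\}(e\otimes f)
\]
after substituting $(c\otimes d)(e\otimes f)=ce\otimes df$, and then apply the Leibniz rule of $A$ separately in the first and second tensor factors; commutativity of $A$ is used to match the resulting monomials. This computation is mechanical once one is careful about which slot each Leibniz expansion is applied in.

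The main obstacle, and the part requiring the most careful bookkeeping, is the Jacobi identity
\[
\{x,\{y,z\}\}+\{y,\{z,x\}\}+\{z,\{x,y\}\}=0
\]
for $x=a_1\otimes a_2$, $y=b_1\otimes b_2$, $z=c_1\otimes c_2$. Here the defining formula produces two summands for each inner bracket and then, upon applying the outer bracket together with the Leibniz rule in $A$ (to handle terms like $\{a_1,b_1c_1\}$), each of the three cyclic terms expands into several monomials. I expect a total of twelve to sixteen monomials per cyclic term, and the strategy is to group them by the pattern of where the bracket lands (e.g. a bracket in only the first factor, in only the second factor, or in both factors). Within each group, the Jacobi identity of $A$ applied in the appropriate slot, together with commutativity of $A$ to reorder the remaining commutative products, yields cancellation. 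This is the step on which the whole verification hinges, and the one where writing out the cyclic sum carefully is essential to avoid sign or ordering errors.
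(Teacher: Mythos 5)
Your direct verification is correct, and it is exactly what the paper intends: the proposition is stated without proof as a routine check, and your plan (antisymmetry and Leibniz by direct expansion using commutativity of $A$, Jacobi by expanding the cyclic sum and grouping monomials by where the brackets land) does go through, with each cyclic term contributing six monomials after the inner Leibniz expansions. One small correction to your sketch of the Jacobi step: the two groups carrying an iterated bracket in a single tensor factor cancel by the Jacobi identity of $A$, but the mixed group, with one bracket in each factor, cancels in pairs across the cyclic sum using antisymmetry of $\{\cdot,\cdot\}$ on $A$ together with commutativity, not Jacobi.
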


A typical way of constructing Poisson algebras is as follows. Let $R$ be a filtered algebra such that the associated graded algebra $A$ is commutative. Then the filtration on $R$ induces a Poisson structure on $A$. In fact, let $\overline{a}\in A_n$ and $\overline{b}\in A_m$ where $a\in R_n$ and $b\in R_m$. Since $A$ is commutative, $ab-ba\in R_{n+m-1}$, which gives  an element $\overline{ab-ba}\in A_{n+m-1}$. Now define $\{\overline{a}, \overline{b}\}=\overline{ab-ba}$. It is easy to check that $\{,\}$ is a Poisson bracket on $A$.

\begin{example}\label{envelop}
{\rm Let $R=U(\mathfrak{g})$, where $\mathfrak{g}$ is a finite-dimensional Lie algebra with Lie bracket $[\,,\,]$. Then $R$ has a canonical filtration such that the associated graded algebra is $S(\mathfrak{g})$. As a consequence, $S(\mathfrak{g})$ becomes a Poisson algebra. In fact, the Poisson structure on $S(\mathfrak{g})$ recovers the Lie bracket in the sense that 
$$\{a, b\}=[a, b],$$
for any $a, b\in \mathfrak{g}$. We call this Poisson algebra the {\it Poisson symmetric algebra of $\mathfrak{g}$} and denote it by $PS(\mathfrak{g})$.}
\end{example}

\begin{definition}
Let $B$ be a commutative bialgebra with a Poisson bracket $\{,\}$. Then $B$ is called a Poisson bialgebra if
$$\Delta(\{a, b\})=\{\Delta(a), \Delta(b)\},$$
for any $a, b\in B$.
If, in addition, $B$ is a Hopf algebra, then $B$ is called a Poisson Hopf algebra.
\end{definition}

Let $B$ a Poisson bialgebra, then $\e(\{a, b\})=0$ for any $a, b\in B$, where $\e$ is the counit. In fact, the axiom of counit implies that $\e(x)=\e(x_1)\e(x_2)$ for any $x\in B$. Therefore,
\begin{align*}
\e(\{a, b\})&=\e(\{a_1, b_1\})\e(a_2)\e(b_2)+\e(a_1)\e(b_1)\e(\{a_2, b_2\})\\
&= \e(\{a_1\e(a_2), b_1\e(b_2)\})+\e(\{\e(a_1)a_2, \e(a_2)b_2\})\\
&= \e(\{a, b\})+\e(\{a, b\}).
\end{align*}
Consequently, $\e(\{a, b\})=0$ for any $a, b\in B$. Also, it is easy to show that for a Poisson Hopf algebra $B$, the antipode $S$ is a Poisson algebra anti-morphism, that is, $S(\{a, b\})=\{S(b), S(a)\}$ for any $a, b\in B$.

For a Poisson Hopf algebra $B$, we say that it is a {\it pointed} (resp. {\it connected}) Poisson Hopf algebra if it is pointed (resp. connected) as a coalgebra.

\subsection{Poisson modules}
\begin{definition}\label{Pmodules}
Let $A$ be a Poisson algebra. Then a left Poisson module structure on a left $A$-module over a Poisson algebra $A$ is a linear map 
$$\{\cdot, \cdot\}_M: A\otimes M\rightarrow M$$
such that 
\begin{enumerate}
\item $\{ \{a, b\}_A, m\}_M=\{a, \{b, m\}_M\}_M-\{b, \{a, m\}_M\}_M,$
\item $\{ab, m\}_M=a\{b, m\}_M+b\{a, m\}_M$,
\item $\{a, bm\}_M=\{a, b\}\cdot m+b\{a, m\}_M$,
\end{enumerate}
for any $a, b\in A$ and $m\in M$.
\end{definition}
Similarly, one can define right Poisson modules over $A$. From now on, by saying a Poisson module, we mean a left Poisson module, unless otherwise stated.
Let $M$ and $N$ be two Poisson modules over $A$. Then a Poisson module morphism $f: M\rightarrow N$ is an $A$-module map $f$ such that $f(\{a, m\}_M)=\{a, f(m)\}_N$. Hence, for a Poisson algebra $A$, there is the category of Poisson modules over $A$, with Poisson modules and Poisson module morphisms as objects and morphisms, respectively.

\begin{lemma}
Let $B$ be a Poisson bialgebra and let $M, N$ be two Poisson modules over $B$. Then $M\otimes N$ becomes a Poisson module over $B$ via the following action and bracket,
\begin{align*}
a\cdot(m\otimes n)&:= a_1m\otimes a_2n\\
\{a, m\otimes n\} &:=a_1m\otimes \{a_2, n\}+ \{a_1, m\}\otimes a_2n,
\end{align*}
for any $a\in B$, $m\in M$ and $n\in N$.
\end{lemma}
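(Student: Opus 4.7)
The plan is to verify the three axioms of Definition \ref{Pmodules} by direct computation, using the Poisson bialgebra compatibility as the essential tool. First I would unpack what the axiom $\Delta(\{a,b\}) = \{\Delta(a), \Delta(b)\}$ says in Sweedler notation: combining it with the tensor product Poisson structure on $B\otimes B$ from Proposition \ref{tensor} and the commutativity of $B$, it becomes
\[
\{a,b\}_1\otimes \{a,b\}_2 \;=\; a_1b_1\otimes\{a_2,b_2\}+\{a_1,b_1\}\otimes a_2b_2.
\]
This identity, which I will refer to as the \emph{coproduct rule}, is what makes the two summands in the definition of $\{a,m\otimes n\}$ behave compatibly under multiplication of $a$.

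The first observation is that $(a,m\otimes n)\mapsto a_1m\otimes a_2n$ is the standard tensor product $B$-module structure, which is associative and unital because $B$ is a bialgebra; this handles the underlying module axiom and requires no Poisson input. The three Poisson module axioms are then checked one by one. Axiom (2), the Leibniz rule $\{ab,m\otimes n\}=a\cdot\{b,m\otimes n\}+b\cdot\{a,m\otimes n\}$, follows by expanding both sides using $\Delta(ab)=a_1b_1\otimes a_2b_2$, the Leibniz rule on $M$ and $N$ separately, and the commutativity of $B$; no coproduct rule is needed. Axiom (3), the compatibility $\{a,b\cdot(m\otimes n)\}=\{a,b\}\cdot(m\otimes n)+b\cdot\{a,m\otimes n\}$, is where the coproduct rule enters for the first time: after expanding the left side via the Leibniz rules for the $M$- and $N$-actions, the term $\{a,b\}\cdot(m\otimes n)=\{a,b\}_1m\otimes\{a,b\}_2n$ on the right is rewritten using the coproduct rule, and the two sides match after using commutativity $a_2b_2=b_2a_2$.

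The main obstacle is axiom (1), the Jacobi identity $\{\{a,b\},m\otimes n\}=\{a,\{b,m\otimes n\}\}-\{b,\{a,m\otimes n\}\}$, since this produces the longest expansion. My plan is to expand the left side by the definition and immediately apply the coproduct rule to $\Delta(\{a,b\})$, producing four groups of terms of the shapes
\[
a_1b_1m\otimes\{\{a_2,b_2\},n\},\quad \{a_1,b_1\}m\otimes\{a_2b_2,n\},
\]
\[
\{a_1b_1,m\}\otimes\{a_2,b_2\}n,\quad \{\{a_1,b_1\},m\}\otimes a_2b_2n.
\]
For the right side I expand each nested bracket twice; the outcome is twelve terms, of which four (the ``mixed'' terms of the form $a_1\{b_1,m\}\otimes b_2\{a_2,n\}$ and their swaps) cancel in pairs, leaving eight surviving terms. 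A term-by-term comparison then shows that the surviving eight terms on the right match the four groups on the left, after applying the Jacobi identities on $M$ and $N$ (to the first and fourth groups) and the Leibniz rules on $M$ and $N$ (to the second and third groups). The bookkeeping is the only delicate part; antisymmetry of the bracket on $B$ and the commutativity of $B$ are used throughout to align the surviving terms.
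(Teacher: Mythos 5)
Your proposal is correct: the paper states this lemma without proof, and the direct verification you outline --- rewriting the Poisson bialgebra axiom via Proposition \ref{tensor} and commutativity as $\{a,b\}_1\otimes \{a,b\}_2=a_1b_1\otimes\{a_2,b_2\}+\{a_1,b_1\}\otimes a_2b_2$, then checking the three axioms of Definition \ref{Pmodules} term by term --- is exactly the routine computation the authors leave to the reader. Your bookkeeping for the Jacobi identity (twelve terms, four mixed cancellations, the remaining eight regrouped via Jacobi on $M$ and $N$ and Leibniz on $M$ and $N$) is accurate, so nothing is missing.
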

The following proposition is clear.
\begin{proposition}
Let $B$ be a Poisson bialgebra. Then the category of Poisson modules over $B$ is a monoidal category.
\end{proposition}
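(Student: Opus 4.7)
The plan is to exhibit all the structural data of a monoidal category on the category $\mathcal{P}_B$ of Poisson modules over $B$ and verify the coherence axioms, reducing everything to the underlying monoidal structure on $\mathrm{Vect}_k$.

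First I would promote the tensor product from the preceding lemma to a bifunctor $\otimes:\mathcal{P}_B\times \mathcal{P}_B\to \mathcal{P}_B$. Given Poisson module morphisms $f:M\to M'$ and $g:N\to N'$, the linear map $f\otimes g$ is automatically a $B$-module morphism since $B$ is a bialgebra, so I only need to check that $f\otimes g$ commutes with the Poisson action; this follows directly from the defining formula $\{a,m\otimes n\}=a_1 m\otimes\{a_2,n\}+\{a_1,m\}\otimes a_2 n$ by applying $f\otimes g$ componentwise and using that $f,g$ intertwine the Poisson brackets.

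Next I would designate the unit object as the ground field $k$, equipped with the $B$-action $a\cdot 1=\e(a)$ and the trivial Poisson action $\{a,1\}=0$. The three axioms of Definition \ref{Pmodules} must be checked: axioms (2) and (3) reduce to $\e(ab)=\e(a)\e(b)$, while axiom (1) reduces to $\e(\{a,b\})=0$, which is precisely the identity established just after the definition of Poisson bialgebra. The standard unit isomorphisms $k\otimes M\to M$, $1\otimes m\mapsto m$ and $M\otimes k\to M$, $m\otimes 1\mapsto m$ are then easily seen to be Poisson module morphisms, because $\{a,1\otimes m\}=\e(a_1)\otimes \{a_2,m\}+0=1\otimes\{a,m\}$ (using $\e(a_1)a_2=a$), and similarly on the right.

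For the associativity constraint I take the usual vector space associator $\alpha_{M,N,P}:(M\otimes N)\otimes P\to M\otimes(N\otimes P)$. It is already a $B$-module map, and a direct expansion of $\{a,(m\otimes n)\otimes p\}$ and $\{a,m\otimes(n\otimes p)\}$ using the bracket formula on both sides, together with the coassociativity $\Delta_2=(\Delta\otimes\mathrm{id})\Delta=(\mathrm{id}\otimes\Delta)\Delta$, yields the same three-term sum on each side, so $\alpha$ is Poisson. The pentagon and triangle identities then follow at the level of underlying vector spaces. The only step that requires any genuine verification is the compatibility of $\alpha$ with the Poisson action, and I expect this Sweedler-notation check --- bookkeeping three instances of $\Delta$ applied to $a$ --- to be the main (but entirely routine) computation.
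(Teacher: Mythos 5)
Your verification is correct and is exactly the routine check the paper has in mind (the paper simply declares the proposition clear after the tensor-product lemma): bifunctoriality, the unit object $k$ via $\e$ with trivial bracket, and the Poisson-compatibility of the unitors and the associator via coassociativity, with coherence inherited from $\mathrm{Vect}_k$. One minor bookkeeping slip: for $k$ with the zero bracket, axioms (1) and (2) of Definition \ref{Pmodules} hold trivially (both sides vanish), and it is axiom (3), $\{a,b\cdot 1\}=\{a,b\}\cdot 1+b\{a,1\}$, that requires the identity $\e(\{a,b\})=0$ established after the definition of a Poisson bialgebra, while $\e(ab)=\e(a)\e(b)$ is what makes $k$ a $B$-module at all --- the needed facts are all in place, so the argument goes through unchanged.
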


Even though in our paper we only consider the canonical case where the Poisson algebras are assumed to be commutative, there has been a great effort to study non-commutative Poisson algebras and their modules. For example, Agore and Militaru introduced the notion of Poisson bimodules for non-commutative Poisson algebras in the paper \cite{AM}.

\medskip
\section{Examples of Poisson Hopf algebras}\label{example}
In this section, we give some examples of Poisson Hopf algebras. It is well known that the enveloping algebra $U(\mathfrak{g})$ of a Lie algebra is a Hopf algebra with $\Delta(a)=1\otimes a + a\otimes 1$ for any $a\in \mathfrak{g}$. A similar result holds for the Poisson symmetric algebra $PS(\mathfrak{g})$, as shown in the following example.
\begin{example}
{\rm The Poisson symmetric algebra $PS(\mathfrak{g})$ defined in Example \ref{envelop} is a Poisson Hopf algebra. The comultiplication and antipode are given by 
$$\Delta(a)=1\otimes a +a\otimes 1, \,\,\,\, S(a)=-a,$$
for any $a\in \mathfrak{g}$.  }
\end{example}

In \cite{WZZ, Zh1}, connected Hopf algebras of Gelfand-Kirillov dimension $3$ and $4$ over an algebraically closed field of characteristic zero are classified. It turns out that for any class of such connected Hopf algebras one could construct a class of Poisson Hopf algebras. Unfortunately, the constructions appear to be ad hoc and we have no theoretical explanation. For example, one should compare Example \ref{GK3} with \cite[Example 7.1]{Zh1}, and Example \ref{GK4} with \cite[Example 4.4]{WZZ}.
\begin{example}\label{GK3}
{\rm Let $B$ be the algebra $k[x_1, x_2, x_3]$ with the following Poisson structure
\begin{align*}
\{x_1, x_2\}=0,\\
\{x_3, x_1\}= \lambda_1x_1+ \alpha x_2,\\
\{x_3, x_2\}=\lambda_2x_2,
\end{align*}
where $\alpha=0$ if $\lambda_1\neq \lambda_2$ and $\alpha=0$ or $1$ if $\lambda_1=\lambda_2$. Then $B$ becomes a Poisson Hopf algebra via
\begin{align*}
\Delta(x_1)=1\otimes x_1+x_1\otimes 1,\\
\Delta(x_2)=1\otimes x_2+ x_2\otimes 1,\\
\Delta(x_3)=1\otimes x_3+x_1\otimes x_2-x_2\otimes x_1+x_3\otimes 1.
\end{align*}}
\end{example}

\begin{example}\label{GK4}
{\rm Let $B$ be the algebra $k[X, Y, Z, W]$ with the following Poisson structure
\begin{align*}
\{Y, X\}&=\{Z, X\}=\{Z, Y\}=0,\\
\{W, X\}&=a_{11}X+a_{12}Y,\\
\{W, Y\}&=a_{21}X+a_{22}Y,\\
\{W, Z\}&=(a_{11}+a_{22})Z+\xi_1X+\xi_2Y,
\end{align*}
where $a_{ij}, \xi_i\in k$. Then $B$ becomes a Poisson Hopf algebra via
\begin{align*}
\Delta(X)&=1\otimes X+ X\otimes 1,\\
\Delta(Y)&=1\otimes Y+ Y\otimes 1,\\
 \Delta(Z)&=1\otimes Z+X\otimes Y-Y\otimes X+ Z\otimes 1,\\
\Delta(W)&=1\otimes W+W\otimes 1\\
&\quad+\theta_1(Z\otimes X-X\otimes Z+X\otimes XY+XY\otimes
X)\\
&\quad+\theta_2(Y\otimes Z-Z\otimes Y+XY\otimes Y+Y\otimes XY),
\end{align*}
where $\theta_i\in k$ and at least one of them is non-zero. }
\end{example}

In the previous examples, the Poisson Hopf algebras are all connected. In the following four examples, the Poisson Hopf algebras are pointed but not connected.
\begin{example}\label{typeA}
{\rm Let $B$ be the algebra $k[g^{\pm 1}, x]$. Then there is a unique Hopf algebra structure on $B$ such that $\Delta(g)=g\otimes g$ and $\Delta(x)=x\otimes 1 +g\otimes x$. Moreover, $B$ is a Poisson Hopf algebra via
\begin{align*}
\{x, g\}= \lambda gx,
\end{align*}
where $\lambda\in k$.}
\end{example}
\begin{example}
{\rm Let $B=k[x, y, z, g^{\pm 1}]$. Then $B$ becomes a Hopf algebra via
\begin{align*}
\Delta(g)=g\otimes g,\\
\Delta(x)=x\otimes g+g^{-1}\otimes x,\\
\Delta(y)=y\otimes g^{-1}+g\otimes x,\\
\Delta(z)=z\otimes1+ 1\otimes z.
\end{align*}
The Poisson structure is given by
\begin{align*}
\{x, g\}=\{y, g\}=\{z, g\}=0,\\
\{z, x\}=\lambda x, \{z, y\}=-\lambda y, \\
\{x, y\}=z.
\end{align*}}
\end{example}

\begin{example}\label{group}
{\rm
Let $B=k[x, a^{\pm1}, b^{\pm 1}]$. Then $B$ becomes a Hopf algebra via
\begin{align*}
\Delta(a)=a\otimes a,\\
\Delta(b)=b\otimes b, \\
\Delta(x)=x\otimes ab+ab\otimes x.
\end{align*}
The Poisson structure is given by
\begin{align*}
\{a, b\}=x,\\
\{x, a\}=\lambda ax^2, \{x, b\}=-\lambda bx^2. \\
\end{align*}}
\end{example}
The next example can be viewed as a Poisson version of $U_q(sl_2)$, the quantized enveloping algebra of $sl_2$.
\begin{example}\label{PoissonU}
{\rm
Let $B$ be the algebra $k[E, F, K^{\pm1}]$ with the following Poisson bracket
\begin{align*}
\{E, K\}=\lambda KE,\\
\{F, K\}= -\lambda KF,\\
\{E, F\}=\alpha (K-K^{-1}).
\end{align*}
Then $B$ becomes a Poisson Hopf algebra via
\begin{align*}
\Delta(K)=K\otimes K,\\
\Delta(E)=E\otimes K+1\otimes E,\\
\Delta(F)=F\otimes 1+K^{-1}\otimes F.
\end{align*}}
\end{example}

The following example can be viewed as a Poisson version of $\mathcal{O}_q(SL_2)$.
\begin{example}
{\rm Let $B$ be the Hopf algebra $\mathcal{O}(SL_2)$. As an algebra, $B=k[x_{11}, x_{12}, x_{21}, x_{22}]/\langle x_{11}x_{22}-x_{12}x_{21}-1\rangle$. The coalgebra structure is given by 
$$\Delta(x_{ij})=\sum_sx_{is}\otimes x_{sj}.$$
The Poisson structure is given by 
\begin{align*}
\{x_{11}, x_{12}\}&=\lambda x_{11}x_{12}, & \{x_{11}, x_{21}\}&=\lambda x_{11}x_{21}, &\{x_{11}, x_{22}\}&=2\lambda x_{12}x_{21},\\
\{x_{12}, x_{21}\}&=0, & \{x_{12}, x_{22}\}&=\lambda x_{12}x_{22}, &\{x_{21}, x_{22}\}&=\lambda x_{21}x_{22}.
\end{align*}
}
\end{example}

In all the previous examples, we make no restriction on the characteristic of the base field $k$ (although in some examples one might need to avoid characteristic $2$). However, there are lots of interesting examples if the base field has positive characteristic. In \cite{WW}, Wang-Wang studied some classes of connected Hopf algebras of dimension $p^3$. It turns out that a lot of them have Poisson versions (again, unfortunately, we have no theoretical explanation). For instance, one should view the following example as a Poisson version of the Hopf algebra in \cite[Theorem 1.3(B3)]{WW}.
\begin{example}
{\rm Assume that the base field $k$ has characteristic $p>2$. Let $B=k[x,y,z]/(x^p,y^p,z^p)$ be the restricted symmetric algebra with three variables. Then $B$ becomes a commutative Hopf algebra via
\begin{align*}
\Delta(x)= &x\otimes 1+1\otimes x,\\
\Delta(y)=&y\otimes 1+1\otimes y,\\
\Delta(z)=&z\otimes 1+1\otimes z -2x\otimes y.
\end{align*}
The Poisson structure is given by
\begin{align*}
\{x,y\}=y,\ \{y,z\}=y^2,\ \{x,z\}=z.
\end{align*}
 }

\end{example}

\section{Poisson polynomial algebras}\label{poly}

The concept of a Poisson polynomial algebra was introduced in \cite{Oh3}. Roughly speaking, it is a Poisson version of the Ore extension.

Let $A$ be a Poisson algebra. A {\it Poisson derivation} on $A$ is a $k$-linear map $\alpha$ on $A$ which is a derivation with respect to both the multiplication and the Poisson bracket, that is, $\alpha(ab)=\alpha(a)b+a\alpha(b)$ and $\alpha\{a, b\}=\{\alpha(a), b\}+\{a, \alpha(b)\}$ for any $a, b\in A$. Suppose that $\delta$ is a derivation on $A$ such that 
\begin{equation}\label{alphaderivation}
\delta(\{a, b\})=\{\delta(a), b\} + \{a, \delta(b)\}+ \alpha(a)\delta(b)-\delta(a)\alpha(b)
\end{equation}
for any $a, b\in A$. Then the Poisson structure on $A$ extends uniquely to a Poisson algebra structure on the polynomial ring $R=A[x]$ such that 
\begin{equation}\label{Poissononx}
\{x, b\}=\alpha(b)x+\delta(b)
\end{equation}
for any $b\in A$. We write $R=A[x;\alpha, \delta]_p$ for this Poisson algebra and call it a {\it Poisson Ore extension} of $A$ for its similarity with Ore extensions.

Motivated by the examples in the previous section and the work in \cite{BOZZ}, we make the following definition.
\begin{definition}
Let $B$ be a Poisson Hopf algebra and let $B[x;\alpha, \delta]_p$ be a Poisson Ore extension of $B$. If $B[x;\alpha, \delta]_p$ is a Poisson Hopf algebra with $B$ as a Poisson Hopf subalgebra, then we say $B[x;\alpha, \delta]_p$ is a Poisson Hopf extension of $B$.
\end{definition}
Now one may ask the natural question: when $B[x;\alpha, \delta]_p$ is a Poisson Hopf Ore extension of $B$?

From now on, we assume that $R=B[x;\alpha, \delta]_p$ is a Poisson Ore extension of $B$ and that $\Delta(x)$ is of the following form.
$$\Delta(x)=g\otimes x+x\otimes 1+ w,$$
where $w\in B\otimes B$ and $g$ is a group-like element in $B$. Furthermore, we can assume that $\e(x)=0$ and $w\in B^+\otimes B^+$.

First, the coassociativity and the antipode axiom imply the following:
$$w\otimes 1+(\Delta\otimes Id)(w)= g\otimes w+(Id\otimes \Delta)(w),$$
and
$$S(w_1)w_2=g^{-1}w_1S(w_2).$$

The defining relation (\ref{Poissononx}) implies that $\{\Delta(x), \Delta(b)\}=\Delta\alpha(b)\Delta(x)+\Delta\delta(b)$ for any $b\in B$. 
Notice that 
\begin{align*}
&\,\,\{\Delta(x), \Delta(b)\}\\
=&\,\, \{g\otimes x+x\otimes 1+w, b_1\otimes b_2\}\\
=&\,\, gb_1\otimes \{x, b_2\}+\{g, b_1\}\otimes b_2x+\{x, b_1\}\otimes b_2+\{w, b_1\otimes b_2\}\\
=&\,\, gb_1\otimes (\alpha(b_2)x+\delta(b_2))+\{g, b_1\}\otimes b_2x+(\alpha(b_1)x+\delta(b_1))\otimes b_2+\{w, b_1\otimes b_2\}\\
=&\,\, (b_1\otimes \alpha(b_2)+\{g, b_1\}g^{-1}\otimes b_2)(g\otimes x)+ (\alpha(b_1)\otimes b_2)(x\otimes 1)+ gb_1\otimes \delta(b_2)+\delta(b_1)\otimes b_2+\{w, b_1\otimes b_2\}.
\end{align*}
As a consequence, we have
\begin{equation}\label{alphaone}
\Delta\alpha(b)=b_1\otimes \alpha(b_2)+\{g, b_1\}g^{-1}\otimes b_2,
\end{equation}
\begin{equation}\label{alphatwo}
\Delta\alpha(b)=\alpha(b_1)\otimes b_2,
\end{equation}
and 
\begin{equation}
\Delta\alpha(b)w+\Delta\delta(b)=gb_1\otimes \delta(b_2)+\delta(b_1)\otimes b_2+\{w, b_1\otimes b_2\}.
\end{equation}

Consider $\eta(b):=\alpha(b_1)S(b_2)$. By using (\ref{alphatwo}), 
$$\Delta\eta(b)=\Delta\alpha(b_1)\Delta S(b_2)=\alpha(b_1)S(b_4)\otimes b_2S(b_3)=\eta(b)\otimes 1.$$
Hence $\eta(b)$ is a scalar for any $b\in B$. Therefore, $\eta$ can be viewed as a $k$-linear map from $B$ to $k$.

Moreover, $\alpha$ can be recovered from $\eta$ by the formula 
\begin{equation}\label{alphaformula}
\alpha(b)=\eta(b_1)b_2
\end{equation}

Now (\ref{alphaone}) becomes 
$$\Delta\alpha(b)=b_1\otimes \eta(b_2)b_3+\{g, b_1\}g^{-1}\otimes b_2.$$
Applying $Id\otimes \e$ on both sides of the previous equation, one gets that
\begin{equation}
\alpha(b)= \eta(b_1)b_2= b_1\eta(b_2)+\{g, b\}g^{-1}.
\end{equation}

Since $\alpha$ is a Poisson derivation, it is easy to check that 
\begin{equation}\label{etamulti}
\eta(ab)=\e(a)\eta(b)+\e(b)\eta(a).
\end{equation}
By using (\ref{alphaformula}) and (\ref{etamulti}), it yields that
\begin{align}
\alpha(\{a, b\})&=\eta(a_1b_1)\{a_2, b_2\}+\eta(\{a_1, b_1\})a_2b_2\\
\nonumber &=(\e(a_1)\eta(b_1)+\e(b_1)\eta(a_1))\{a_2, b_2\}+\eta(\{a_1, b_1\})a_2b_2\\
\nonumber &= \{\e(a_1)a_2, \eta(b_1)b_2\}+\{\eta(a_1)a_2, \e(b_1)b_2\}+\eta(\{a_1, b_1\})a_2b_2\\
\nonumber &= \{\alpha(a), b\}+\{a, \alpha(b)\}+ \eta(\{a_1, b_1\})a_2b_2.
\end{align}
Since $\alpha$ is a derivation with respect to the Poisson bracket, we have
$$\eta(\{a_1, b_1\})a_2b_2=0$$
for any $a, b$. Applying $\e$ to the above equality, one gets
\begin{equation}
\eta(\{a_1, b_1\})\e(a_2)\e(b_2)=\eta(\{a_1\e(a_2), b_1\e(b_2)\})=\eta(\{a, b\})=0
\end{equation}
for any $a, b\in B$.

Gathering things together, we have the following theorem, which can be viewed as a Poisson version of \cite[Thereom 2.4]{BOZZ}.

\begin{theorem}\label{PoissonHopf}
Let $B$ be a Poisson Hopf algebra and let $B[x;\alpha, \delta]_p$ be a Poisson polynomial algebra which is a Poisson Hopf algebra with $B$ as a Poisson Hopf subalgebra.
Suppose $\e(x)=0$ and $\Delta(x)=g\otimes x+x\otimes 1+w$ for some $w\in B^+\otimes B^+$ and group-like element $g$.
Then
\begin{enumerate}
\item There exists a linear map $\eta: B\rightarrow k$ such that 
\begin{equation}\label{alpha}
\alpha(b)= \eta(b_1)b_2= b_1\eta(b_2)+\{g, b\}g^{-1}
\end{equation}
for any $b\in B$. The map $\eta$ satisfies the condition
\begin{equation}\label{deri}
\eta(ab)=\e(a)\eta(b)+\e(b)\eta(a)\quad\text{and}\quad\eta(\{a, b\})=0,
\end{equation}
for any $a, b\in B$. 

\item The map $\delta$ satisfies the condition
\begin{equation}\label{delta}
\Delta\delta(b)-\delta(b_1)\otimes b_2-gb_1\otimes\delta(b_2)=\{w, \Delta(b)\}-\Delta\alpha(b)w.
\end{equation}

\item The element $w=\sum w_1\otimes w_2$ satisfies the following identities
\begin{equation}\label{S}
S(w_1)w_2=g^{-1}w_1S(w_2),
\end{equation}
and
\begin{equation}\label{w}
w\otimes 1+(\Delta\otimes Id)(w)= g\otimes w+(Id\otimes \Delta)(w).
\end{equation}
\end{enumerate}
\end{theorem}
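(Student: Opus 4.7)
The plan is to exploit, one at a time, the three coalgebra axioms applied to the generator $x$, together with the compatibility of $\Delta$ with the Poisson bracket. Most of the computation is already carried out in the discussion preceding the statement; what remains is to organize it into the three parts.

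First I would establish part (3). Applying coassociativity to $\Delta(x)=g\otimes x+x\otimes 1+w$, and using that $g$ is group-like, all pure $x$-terms cancel on both sides, leaving exactly identity (\ref{w}). For (\ref{S}), I apply the two antipode relations, obtaining on the one hand $S(g)x+S(x)+S(w_1)w_2=\e(x)=0$ and on the other $gS(x)+x+w_1S(w_2)=0$; eliminating $S(x)$ and using $S(g)=g^{-1}$ yields $S(w_1)w_2=g^{-1}w_1S(w_2)$.

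For parts (1) and (2) I apply $\Delta$ to the defining relation $\{x,b\}=\alpha(b)x+\delta(b)$ and use Proposition \ref{tensor} to expand $\{\Delta(x),\Delta(b)\}$. Sorting the resulting terms by whether the indeterminate appears as $g\otimes x$, as $x\otimes 1$, or not at all, gives three separate identities (\ref{alphaone}), (\ref{alphatwo}), and the identity (\ref{delta}) of part (2). To finish part (1), I define $\eta(b):=\alpha(b_1)S(b_2)\in B$. Using (\ref{alphatwo}) together with coassociativity and the antipode axiom, a direct computation gives $\Delta\eta(b)=\eta(b)\otimes 1$, so $\eta(b)\in k$ and $\eta$ is a linear functional $B\to k$. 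Reconstructing $\alpha$ from $\eta$ via $\alpha(b)=\alpha(b_1)\e(b_2)=\alpha(b_1)S(b_2)b_3=\eta(b_1)b_2$, and combining with (\ref{alphaone}) after applying $\mathrm{Id}\otimes\e$, gives both expressions for $\alpha(b)$ in (\ref{alpha}).

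Finally, the properties (\ref{deri}) of $\eta$ follow from $\alpha$ being a Poisson derivation. The Leibniz rule $\alpha(ab)=\alpha(a)b+a\alpha(b)$, together with $\alpha(b)=\eta(b_1)b_2$ and the application of $\e$, forces $\eta(ab)=\e(a)\eta(b)+\e(b)\eta(a)$; similarly, the bracket-derivation condition $\alpha(\{a,b\})=\{\alpha(a),b\}+\{a,\alpha(b)\}$ implies $\eta(\{a_1,b_1\})a_2b_2=0$, from which $\eta(\{a,b\})=0$ upon applying $\e$ once more. The main obstacle is purely bookkeeping: the expansion of $\{\Delta(x),\Delta(b)\}$ produces many Sweedler-indexed terms, and one must separate carefully those proportional to $g\otimes x$ from those proportional to $x\otimes 1$, since it is precisely the equality of these two distinct expressions for $\Delta\alpha(b)$ that forces the existence of the scalar-valued map $\eta$.
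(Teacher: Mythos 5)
Your proposal is correct and follows essentially the same route as the paper: part (3) from coassociativity and the antipode axiom applied to $\Delta(x)$, parts (1)–(2) by expanding $\{\Delta(x),\Delta(b)\}$ and sorting the $g\otimes x$, $x\otimes 1$, and remaining terms, then introducing $\eta(b)=\alpha(b_1)S(b_2)$, showing it is scalar-valued, and deducing (\ref{deri}) from $\alpha$ being a Poisson derivation together with $\Delta(\{a,b\})=\{\Delta(a),\Delta(b)\}$. The only difference is cosmetic: you eliminate $S(x)$ explicitly to get (\ref{S}), a step the paper leaves implicit.
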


Notice that the condition $\eta(ab)=\e(a)\eta(b)+\e(b)\eta(a)$ in $(\ref{deri})$ just says that $\eta: B\rightarrow k$ is a derivation where $k$ is viewed as the trivial module over $B$.
It is not hard to show that the converse of the previous theorem is also true. Namely,
\begin{proposition}
Let $B$ be a Poisson Hopf algebra. Suppose that $\alpha$ is a Poisson derivation on $B$, $\delta$ is a derivation on $B$ satisfying $(\ref{alphaderivation})$, $g\in G(B)$ and $w\in B^+\otimes B^+$. If all these data satisfy conditions $(\ref{alpha})-(\ref{w})$, then the Poisson polynomial $B[x;\alpha, \delta]_p$ has a unique Poisson Hopf algebra structure with $B$ as a Poisson Hopf subalgebra such that $\epsilon(x)=0$ and $\Delta(x)=g\otimes x+x\otimes 1+w$.
\end{proposition}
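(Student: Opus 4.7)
The plan is to reverse the computations in the proof of Theorem \ref{PoissonHopf}. Since $R=B[x;\alpha,\delta]_p$ is, as a commutative algebra, the polynomial algebra $B[x]$, any algebra homomorphism out of $R$ is determined by its values on $B$ and $x$. I would therefore begin by \emph{defining} $\Delta\colon R\to R\otimes R$ and $\e\colon R\to k$ as the unique algebra homomorphisms extending those on $B$ and sending $x$ to $g\otimes x+x\otimes 1+w$ and $0$ respectively. Uniqueness of the Poisson Hopf structure is then immediate: any such structure must agree with $B$'s on $B$, and $\Delta,\e$ on $x$ are prescribed, while the antipode and the bracket with $x$ are forced by the axioms.

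Next I would verify the coalgebra axioms on the generator $x$; everything on $B$ is already known. The counit identity follows from $w\in B^+\otimes B^+$ together with $\e(g)=1$. Coassociativity on $x$ reduces, after expanding both sides of $(\Delta\otimes\mathrm{id})\Delta(x)=(\mathrm{id}\otimes\Delta)\Delta(x)$, precisely to condition $(\ref{w})$. For the antipode, I would extend $S$ as an algebra anti-homomorphism by setting $S(x):=-g^{-1}x-g^{-1}S(w_1)w_2$. Using $(\ref{S})$, this element equals $-g^{-1}x-w_1S(w_2)$, so the two antipode axioms $S(x_1)x_2=0$ and $x_1 S(x_2)=0$ both reduce to identities that hold automatically from the definition; condition $(\ref{S})$ is exactly what guarantees the two expressions for $S(x)$ coincide so that both axioms can be satisfied simultaneously.

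The real work is in verifying the Poisson bialgebra compatibility $\Delta\{r,s\}=\{\Delta(r),\Delta(s)\}$ on $R\otimes R$. Since both sides are biderivations in $r$ and $s$ (the left-hand side uses that $\Delta$ is an algebra map and the Leibniz rule for $\{,\}$; the right-hand side is a consequence of Proposition \ref{tensor}), it suffices to check the identity on a generating set, namely $\{x,b\}$ for $b\in B$, with $\{a,b\}$ for $a,b\in B$ known and $\{x,x\}=0$ forced by antisymmetry. Unwinding $\{\Delta(x),\Delta(b)\}$ using the formula for $\Delta(x)$ and the bracket on $R\otimes R$ produces exactly the expansion performed before Theorem \ref{PoissonHopf}, and the identity $\Delta\{x,b\}=\Delta(\alpha(b))\Delta(x)+\Delta(\delta(b))$ then follows from the two formulas for $\alpha(b)$ in $(\ref{alpha})$ (which simultaneously handle the $x\otimes 1$ and $g\otimes x$ components) together with $(\ref{delta})$ (which handles the $B\otimes B$ component).

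The main obstacle is this last compatibility step: one must split $\{\Delta(x),\Delta(b)\}$ along the three summands of $\Delta(x)$ and match each coefficient carefully, using both expressions in $(\ref{alpha})$ to identify the $g\otimes x$-term with what $\Delta\alpha(b)(g\otimes x)$ contributes, while $(\ref{delta})$ collects the remaining terms involving $w$ and $\delta$. Once this is done the Jacobi-type compatibility $\e(\{a,b\})=0$ and $S(\{a,b\})=\{S(b),S(a)\}$ follow, either as consequences already derived for Poisson Hopf algebras in Section \ref{pre} or by checking them on the generator $x$ using $(\ref{S})$ and $(\ref{deri})$.
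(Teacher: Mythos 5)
Your overall strategy is exactly the intended one: the paper offers no written proof of this converse (it is dismissed with ``it is not hard to show''), and the argument it has in mind is precisely yours --- define $\Delta$ and $\e$ as the unique algebra maps on the polynomial algebra $B[x]$ with the prescribed values on $x$, observe that coassociativity on $x$ is literally condition (\ref{w}), that the counit axiom follows from $w\in B^+\otimes B^+$, and that the Poisson compatibility $\Delta\{r,s\}=\{\Delta(r),\Delta(s)\}$, being a biderivation identity on the commutative Poisson algebra $R\otimes R$, only needs to be checked on the generating pairs $(b,b')$ and $(x,b)$, where it reduces to (\ref{alpha}) (both expressions, giving the $x\otimes 1$ and $g\otimes x$ coefficients, the latter using that $\Delta_B$ is a Poisson map to expand $\Delta(\{g,b\}g^{-1})$) and to (\ref{delta}) for the $B\otimes B$ part. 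Two small points should be fixed or made explicit. First, your antipode formula is off by a factor of $g$: from $m(S\otimes \mathrm{id})\Delta(x)=g^{-1}x+S(x)+S(w_1)w_2$ and $m(\mathrm{id}\otimes S)\Delta(x)=gS(x)+x+w_1S(w_2)$, the two axioms force $S(x)=-g^{-1}x-S(w_1)w_2$ and $S(x)=-g^{-1}x-g^{-1}w_1S(w_2)$ respectively, and (\ref{S}) is exactly the statement that these coincide; your expression $-g^{-1}x-g^{-1}S(w_1)w_2$ (and its claimed identification with $-g^{-1}x-w_1S(w_2)$) satisfies neither axiom as written. Second, you should say why checking the antipode identity on the generators $B\cup\{x\}$ suffices: since $R$ is commutative, $S$ extended as an algebra map makes both convolutions $S*\mathrm{id}$ and $\mathrm{id}*S$ algebra maps $R\to R$, so they agree with $u\e$ as soon as they do on algebra generators. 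With these repairs the verification is complete and matches the computation preceding Theorem \ref{PoissonHopf} run in reverse.
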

\begin{example} 
{\rm Let $C$ be the Poisson Hopf algebra defined in Example \ref{typeA}. Clearly, $B=k[g^{\pm1}]$ is a Poisson Hopf subalgebra of $C$ with the trivial Poisson structure. Now $C$ is isomorphic to $B[x;\alpha, \delta]_p$, where $\alpha(g)=\lambda g$ and $\delta(g)=0$. In fact, if we retain the notation in Theorem \ref{PoissonHopf}, then $\eta: B\rightarrow k$ is the unique derivation such that $\delta(g)=\lambda$ and $w=0$ and all conditions in the theorem are trivially satisfied.}
\end{example}
Now we define an \emph{iterated Poisson Hopf Ore extension} 
to be a Poisson Hopf algebra $B$ containing a chain of Poisson Hopf subalgebras
\begin{equation} 
\label{introdef1} 
k = B_{(0)} \subset \cdots \subset B_{(i)} \subset B_{(i+1)} 
\subset \cdots \subset B_{(n)} = B, 
\end{equation}
with each of the extensions $B_{(i)} \subset B_{(i+1)} := 
B_{(i)}[x_{i+1}; \sigma_{i+1},\delta_{i+1}]$ being a Poisson Hopf Ore extension. 

It is not hard to see that both Poisson Hopf algebras in Example \ref{GK3} and Example \ref{GK4} can be realized as iterated Poisson Hopf Ore extensions.

\medskip
\section{Universal enveloping algebras of Poisson algebras}\label{Puniversal}
Let $A$ be a Poisson algebra. Then one can define the universal enveloping algebra of $A$ (See \cite{U}). We briefly recall the construction here. Let $m_A= \{m_a : a\in A\}$ and $h_A=\{h_a : a\in A\}$ be two copies of the vector space $A$ endowed with two $k$-linear isomorphisms $m : A\rightarrow m_A$ sending $a$ to $m_a$ and $h : A\rightarrow h_A$ sending $a$ to $h_a$.  Then $A^e$ is an associative algebra over $k$, with an identity $1$, generated by $m_A$ and $h_A$ with relations
\begin{align}
\label{5.1}m_{xy}=m_xm_y,\\
h_{\{x, y\}}=h_xh_y-h_yh_x,\\
\label{hxy}h_{xy}=m_yh_x+m_xh_y,\\
m_{\{x, y\}}=h_xm_y-m_yh_x=[h_x, m_y],\\
\label{5.5}m_1=1.
\end{align}

The following result is known. See \cite[Corollary 1]{U}.
\begin{proposition}
The category of left (resp. right) Poisson modules over a Poisson algebra $A$ is equivalent to the left (resp. right) module category over $A^e$.
\end{proposition}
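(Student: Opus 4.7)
The plan is to construct mutually inverse functors between the category of left Poisson modules over $A$ and the category of left modules over $A^e$, by matching the defining relations (\ref{5.1})--(\ref{5.5}) of $A^e$ term-by-term with the three axioms of Definition \ref{Pmodules}. The guiding principle is that the generator $m_a$ encodes the $A$-module action $a\cdot(-)$ while the generator $h_a$ encodes the Hamiltonian action $\{a,-\}_M$; the five relations of $A^e$ are precisely the compatibility conditions these two actions must satisfy on a Poisson module.

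First I would construct a functor $F$ from left Poisson modules to left $A^e$-modules. Given $(M,\cdot,\{-,-\}_M)$, define $k$-linear maps $\widetilde m,\widetilde h\colon A\to \End_k(M)$ by $\widetilde m_a(x)=a\cdot x$ and $\widetilde h_a(x)=\{a,x\}_M$. A direct check shows that (\ref{5.1}) and (\ref{5.5}) encode the associativity and unit of the $A$-module structure, that $h_{\{a,b\}}=[h_a,h_b]$ encodes axiom (1), that (\ref{hxy}) encodes axiom (2), and that $m_{\{a,b\}}=[h_a,m_b]$ encodes axiom (3). By the universal property of $A^e$ presented by generators and relations, these maps extend uniquely to an algebra homomorphism $A^e\to \End_k(M)$, making $M$ into a left $A^e$-module.

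Second I would construct the inverse functor $G$. Given a left $A^e$-module $M$, relations (\ref{5.1}) and (\ref{5.5}) show that $a\cdot x:=m_a\,x$ makes $M$ into a left $A$-module. Setting $\{a,x\}_M:=h_a\,x$ and reading the remaining relations backwards gives the three Poisson module axioms. It is then immediate that $F\circ G$ and $G\circ F$ are identity functors, since they simply encode and decode the same underlying data.

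Finally I would check that morphisms correspond. A $k$-linear map $f\colon M\to N$ is a Poisson module morphism precisely when it intertwines both the $m$-actions and the $h$-actions; since the elements $m_a,h_a$ generate $A^e$ as an algebra, this is equivalent to $f$ being $A^e$-linear. The argument for right modules is entirely symmetric. Because the relations defining $A^e$ were chosen exactly so that this translation is valid, the proof presents no genuine obstacle and reduces to a careful dictionary between the axioms of Definition \ref{Pmodules} and the relations (\ref{5.1})--(\ref{5.5}).
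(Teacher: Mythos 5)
Your proposal is correct: the dictionary between the relations (\ref{5.1})--(\ref{5.5}) and the three axioms of Definition \ref{Pmodules}, together with the universal property of the presentation of $A^e$, is exactly the standard argument, which the paper itself does not spell out but simply cites from \cite[Corollary 1]{U}. Your construction of the two functors and the check on morphisms reconstructs that cited proof, so there is nothing to add.
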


It turns out that the map $m$ and $h$ induce an algebra map and a Lie map, respectively. Moreover, the map $m$ is injective \cite[Section 2]{U}. Therefore we can identify $x\in A$ with $m_x\in A^e$. We will sometimes refer to $m$ and $h$ as {\it structure maps} of $A^e$.

Obviously, $A$ is an $A^e$-module via the action defined by $h_x\cdot a=\{x, a\}$ and $m_x\cdot a=xa$ for any $x, a\in A$. There is a unique $A^e$-module map $\xi: A^e\rightarrow A$ sending $1$ to $1$.

An equivalent definition for $A^e$ is to use the universal property of the structure maps $m$ and $h$ \cite[Definition 1]{Oh2}.

\subsection{Poisson derivation}
Let $A$ be a Poisson algebra and $M$ a Poisson $A$-module. A $k$-linear map $\delta: A\rightarrow M$ is called a {\it Poisson derivation} with coefficients in $M$ if 
$$\delta(ab)=a\delta(b)+b\delta(a),$$
and $$\delta(\{a, b\})=\{a, \delta(b)\}-\{b, \delta(a)\}.$$
A {\it universal Poisson derivation} of the Poisson algebra $A$ is a Poisson module $\Theta_A$ together with a Poisson derivation $D: A\rightarrow \Theta_A$ such that for any Poisson derivation $\delta: A\rightarrow M$ there is a unique Poisson module map $f: \Theta_A\rightarrow M$ with $\delta=f\circ D$. As shown in \cite[Lemma 1]{U}, $\Theta_A$ can be constructed as the left ideal $I$ of $A^e$ generated by all $h_x$ where $x\in A$ with the derivation $D$ given by 
$$D: A\rightarrow I\quad (x\mapsto h_x).$$
Based on this construction, we have the following proposition.

\begin{proposition}\label{decomp}
Let $A$ be a Poisson algebra. Then $A^e\cong A\oplus I$ as left $A$-modules, where $I$ is the left ideal of $A^e$ generated by all $h_x$ with $x\in A$. Consequently,
$$A^e\cong A\oplus\Theta_A$$
as left $A$-modules, where $\Theta_A$ is the universal Poisson derivation of $A$.
\end{proposition}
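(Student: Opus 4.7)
The plan is to produce the splitting $A^e \cong A \oplus I$ of left $A$-modules by constructing an $A$-module retraction of the inclusion $\iota : A \hookrightarrow A^e$, $a \mapsto m_a$, and then identifying the kernel of this retraction with $I$. The retraction will come from the canonical $A^e$-module structure on $A$ recalled just before the proposition.

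First, I would introduce the augmentation $\xi : A^e \to A$ defined by $\xi(r) = r \cdot 1$, where $A^e$ acts on $A$ via $m_a \cdot b = ab$ and $h_x \cdot b = \{x, b\}$. Since $\xi$ is a left $A^e$-module map, it is also a left $A$-module map when $A^e$ is viewed as a left $A$-module via $\iota$. One has $\xi(m_a) = a$ and $\xi(h_x) = \{x, 1\} = 0$, so $\xi \circ \iota = \mathrm{id}_A$. Consequently the short exact sequence of left $A$-modules
\[
0 \to \ker \xi \to A^e \to A \to 0
\]
splits, yielding $A^e = \iota(A) \oplus \ker \xi$.

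Next I would show $\ker \xi = I$. The inclusion $I \subset \ker \xi$ is immediate, since $\ker \xi$ is a left ideal containing every $h_x$. For the reverse inclusion, I would prove the spanning statement $A^e = \iota(A) + I$: the relation $m_x m_y = m_{xy}$ collapses adjacent $m$'s, and the commutator relation $m_{\{x,y\}} = [h_x, m_y]$, rewritten as $h_x m_y = m_y h_x + m_{\{x,y\}}$, allows any $m$ to be moved leftward past an $h$ at the cost of producing an additional term with one fewer $h$. Iterating, every element of $A^e$ becomes a $k$-linear combination of monomials $m_a h_{x_1} \cdots h_{x_n}$ with $n \geq 0$; the $n = 0$ summand lies in $\iota(A)$ and the $n \geq 1$ summands lie in $I$. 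Since $\iota(A) \cap I \subset \iota(A) \cap \ker \xi = 0$, the sum is forced to be direct, so $A^e = \iota(A) \oplus I$ and in particular $\ker \xi = I$. The second assertion $A^e \cong A \oplus \Theta_A$ is then immediate from the identification $\Theta_A = I$ recalled just above the proposition.

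The main obstacle is carrying out the normal-form reduction cleanly. Each application of the commutation relation produces two terms: one with the $hm$-inversion replaced by an $mh$-pair, and one strictly shorter word with one fewer $h$; a double induction on (total number of $h$'s, number of $hm$-inversions) guarantees termination. The remaining generating relations $h_{xy} = m_y h_x + m_x h_y$ and $h_{\{x,y\}} = h_x h_y - h_y h_x$ pose no obstruction to the reduction — they only impose further identifications among the normal-form monomials, which is irrelevant for establishing the spanning claim $A^e = \iota(A) + I$.
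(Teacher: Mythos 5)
Your proposal is correct and follows essentially the same route as the paper: the paper also uses the $A^e$-module map $\xi\colon A^e\to A$ with $\xi\circ m=\mathrm{id}_A$ to split off $A$, identifies the complement $\ker\xi$ with the left ideal $I$ generated by the $h_x$, and then invokes the identification of $I$ with $\Theta_A$ (Umirbaev's construction recalled just before the proposition). The only difference is that you spell out, via the normal-form spanning argument $A^e=\iota(A)+I$, the step the paper dismisses as ``easy to show,'' which is a welcome but not essentially different addition.
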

\begin{proof}
We have seen that there is a unique $A^e$-module map $\xi: A^e\rightarrow A$ sending $1$ to $1$. Clearly, $\xi \circ m=id_A$. Since both $\xi$ and $m$ are $A$-module maps, $A^e$ is isomorphic to $A\oplus I$ as $A$-modules, where $I$ is the kernel of the map $\xi$. It is easy to show that $I$ agrees with the left ideal of $A^e$ generated by all $h_x$ where $x\in A$. Now the result follows from \cite[Lemma 1]{U}. 
\end{proof}

\subsection{Relation with Lie-Rinehart algebra}
The notion of Lie-Rinehart algebras appeared in several literatures under different names before the first thorough study by Rinehart in \cite{Ri}. It can be viewed as the algebraic analogue of the notion of Lie algebroids. In this section, we are going to study the relation between $A^e$ and the Lie-Rinehart algebra structure on $\Omega_A$, the K\"alher differentials of $A$.

\begin{definition}\label{LR}
Let $R$ be a commutative ring with identity, $A$ a commutative $R$-algebra and $L$ a Lie algebra over $R$. The pair $(A, L)$ is called a {\it Lie-Rinehart algebra} over $A$ if $L$ is a left $A$-module and there is an {\it anchor map} $\rho: L\rightarrow \Der_R(A)$, which is an $A$-module and a Lie algebra morphism, such that the following relation is satisfied,
\begin{equation}
[\xi, a\cdot\zeta]= a\cdot [\xi, \zeta]+ \rho(\xi)(a)\cdot \zeta,
\end{equation}
for any $a\in A$ and $\xi, \zeta\in L$.
\end{definition}

For simplicity, we will use $\xi(a)$ for $\rho(\xi)(a)$ where $\xi\in L$ and $a\in A$. From now on, if not mentioned otherwise, we would assume that $R$ is the ground field $k$.  The following example is the one that we are mainly interested \cite[Theorem 3.8]{H}.
\begin{example}\label{differential}
{\rm Let $A$ be a Poisson algebra over $k$ and $\Omega_A$ its K\"ahler differentials. Then the pair $(A, \Omega_A)$ becomes a Lie-Rinehart algebra over $k$ where the Lie bracket on $\Omega_A$ is given by 
\begin{equation}
[a df, bdg]= abd\{f, g\}+ a\{f, b\}dg-b\{g, a\}df,
\end{equation}
and the anchor map sends $df$ to $\{f, \cdot \}$.}
\end{example}

Next we recall the construction of $V(A, L)$, the enveloping algebra of a Lie-Rinehart algebra $(A, L)$ over $R=k$. Via the map $\rho: L\rightarrow \Der_kA$, $A$ becomes a Lie module over the Lie algebra $L$. Let $\mathfrak{h}=A\rtimes L$, the semidirect product of $A$ with $L$ \cite[7.4.9]{We} and take $U=U(\mathfrak{h})$. Recall that as a $k$-space, $A\rtimes L$ is just the direct sum of $A$ and $L$ and the Lie bracket is given by
$$[a+X, b+Y]=(X(b)-Y(a))+[X, Y],$$
where $a, b\in A$ and $X, Y\in L$.
Also, as the direct sum of $A$ and $L$, $A\rtimes L$ carries a natural $A$-module structure. For any $z\in A\rtimes L$, denote by $z'$ the canonical image of $z$ in $U$. Let $P$ be the two-sided ideal generated by all elements of the form $(a.z)'-a'z'$, where $a\in A$ and $z\in A\rtimes L$. Then $V(A, L)$ is defined to be $U/P$ and there is a canonical map $A\rtimes L\rightarrow V(A, L)$.

Now let $(A, \Omega_A)$ be the Lie-Rinehart algebra defined in Example \ref{differential}, where $A$ is a Poisson algebra. Then we have two maps
\begin{equation}
\alpha: A\rightarrow A\rtimes\Omega_A\rightarrow V(A, \Omega_A),
\end{equation}
and 
\begin{equation}
\beta: A\xrightarrow{d} \Omega_A\rightarrow A\rtimes\Omega_A\rightarrow V(A, \Omega_A).
\end{equation}
\begin{lemma}\label{structuremaps}
Retain the above notation. Then $\alpha$ is an algebra map and $\beta$ is a Lie algebra map. Moreover, we have 
\begin{equation}
\alpha(\{a, b\})=[\beta(a), \alpha(b)], \,\, \beta(ab)=\alpha(a)\beta(b)+\alpha(b)\beta(b)
\end{equation}
for any $a, b\in A$.
\end{lemma}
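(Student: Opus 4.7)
The plan is to verify each identity by unpacking the construction of $V(A,\Omega_A)$ step by step: first compute in the semidirect product Lie algebra $\mathfrak{h} = A \rtimes \Omega_A$, pass to $U(\mathfrak{h})$ (where the Lie bracket becomes the commutator), and then use the defining quotient relations $(a \cdot z)' = a' z'$ of $V(A,\Omega_A)$ to conclude. All four assertions will fall out of elementary bracket computations together with the derivation property of $d \colon A \to \Omega_A$.

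First I would check that $\alpha$ is an algebra map. Since $A$ is abelian and $A \hookrightarrow \mathfrak{h}$ as a Lie subalgebra with trivial bracket, elements of $A$ commute in $U(\mathfrak{h})$, but this alone does not give multiplicativity. The key point is that under the $A$-module action on $\mathfrak{h} = A \oplus \Omega_A$, the scalar $a$ acts on $b \in A$ just by multiplication in $A$, so $a \cdot b = ab$. The defining relation $(a \cdot z)' = a' z'$ of $V(A,\Omega_A)$ applied with $z = b \in A$ therefore gives $(ab)' = a' b'$, i.e. $\alpha(ab) = \alpha(a)\alpha(b)$. Multiplicativity of $\alpha$ is immediate.

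Next I would treat $\beta$. By Example \ref{differential}, the Lie bracket on $\Omega_A$ satisfies $[da, db] = d\{a,b\}$, so in $U(\mathfrak{h})$ we have $(da)'(db)' - (db)'(da)' = [da,db]' = (d\{a,b\})'$. Passing to $V(A,\Omega_A)$, this reads $[\beta(a), \beta(b)] = \beta(\{a,b\})$, so $\beta$ is a Lie algebra map. For the mixed identity $\alpha(\{a,b\}) = [\beta(a),\alpha(b)]$, I would use the formula for the semidirect product bracket $[a+X, b+Y] = (X(b) - Y(a)) + [X,Y]$: with $X = da$ and $Y = 0$ this gives $[da, b] = (da)(b) = \{a,b\}$ via the anchor map. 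Hence in $V(A,\Omega_A)$ we get $[\beta(a),\alpha(b)] = [(da)', b'] = \{a,b\}' = \alpha(\{a,b\})$.

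Finally, for $\beta(ab) = \alpha(a)\beta(b) + \alpha(b)\beta(a)$, I would invoke the Leibniz rule $d(ab) = a\,db + b\,da$ in $\Omega_A$, apply the canonical map to $V(A,\Omega_A)$, and then use the quotient relations $(a \cdot db)' = a'(db)'$ and $(b \cdot da)' = b'(da)'$ to rewrite each term as $\alpha(a)\beta(b)$ and $\alpha(b)\beta(a)$ respectively. I expect no real obstacle here; the only thing to be careful about is bookkeeping between $\mathfrak{h}$, $U(\mathfrak{h})$, and the quotient $V(A,\Omega_A)$, making sure each identity is invoked in the right algebra and that the $A$-action on $\Omega_A \subset \mathfrak{h}$ is the one induced from the $A$-module structure on $\Omega_A$ rather than multiplication in $U(\mathfrak{h})$.
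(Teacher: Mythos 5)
Your proposal is correct and follows essentially the same route as the paper: the mixed identity $\alpha(\{a,b\})=[\beta(a),\alpha(b)]$ via the semidirect product bracket and the anchor $da(b)=\{a,b\}$, and the second identity via the Leibniz rule $d(ab)=a\,db+b\,da$ together with the defining relations $(a\cdot z)'=a'z'$ of $V(A,\Omega_A)$. You merely spell out the routine verifications (that $\alpha$ is an algebra map and $\beta$ a Lie map) which the paper leaves implicit, and you correctly prove the intended identity $\beta(ab)=\alpha(a)\beta(b)+\alpha(b)\beta(a)$ despite the typo in the statement.
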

\begin{proof} Recall that the anchor map $\rho$ sends $da$ to $\{a, \cdot\}$. Hence by all previous convention $da(b)=\rho(da)(b)=\{a, b\}$ for any $a, b\in A$. Therefore,
\begin{equation*}
[\beta(a), \alpha(b)]= [da, b]= da(b)=\{a, b\}= \alpha(\{a, b\}).
\end{equation*}
The second equation simply amounts to 
\begin{equation*}
d(ab)=adb+bda,
\end{equation*}
which is obviously true.
\end{proof}

\begin{proposition}\label{universal}
Retain the notation in Lemma \ref{structuremaps}. Let $B$ be a $k$-algebra, $\gamma: A\rightarrow B$ an algebra map and $\delta: A\rightarrow B$ a Lie algebra map such that 
\begin{equation}
\gamma(\{a, b\})=[\delta(a), \gamma(b)], \,\, \delta(ab)=\gamma(a)\delta(b)+\gamma(b)\delta(b)
\end{equation}
for any $a, b\in A$. Then there exists a unique algebra map $\lambda: V(A, \Omega_A)\rightarrow B$ such that the diagram

\[
\begin{CD}
A@>\alpha>> V(A, \Omega_A)@<\beta<< A\\
@V=VV   @V\lambda VV   @VV=V\\
A@>\gamma>> B@<\delta<< A\\
\end{CD}
\]
commutes.

\end{proposition}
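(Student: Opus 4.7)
The plan is to build $\lambda$ via the universal property of $V(A, \Omega_A)$, which by construction amounts to producing a Lie algebra homomorphism from $A \rtimes \Omega_A$ to $B$ (equipped with the commutator bracket) and then verifying the relations defining the ideal $P$.

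First, since $\delta: A \to B$ satisfies $\delta(ab) = \gamma(a)\delta(b) + \gamma(b)\delta(a)$, it is a derivation when $B$ is viewed as an $A$-module via the algebra map $\gamma$. The universal property of K\"ahler differentials then produces a unique $A$-linear map $\tilde{\delta}: \Omega_A \to B$ with $\tilde{\delta}(df) = \delta(f)$. I define
\[
\phi: A \rtimes \Omega_A \to B, \qquad \phi(a + \omega) := \gamma(a) + \tilde{\delta}(\omega),
\]
and check that $\phi$ is a Lie algebra map. The three mixed brackets reduce as follows: $[\gamma(a), \gamma(b)] = 0 = \phi([a,b])$ since $\gamma(A)$ is commutative; $[\tilde\delta(df), \gamma(b)] = [\delta(f), \gamma(b)] = \gamma(\{f,b\}) = \phi([df, b])$ by the first hypothesis; and for $X = adf$, $Y = bdg$, the identity $[\tilde\delta(X), \tilde\delta(Y)] = \phi([X, Y])$ is obtained by expanding $[X, Y]$ via the explicit formula $[adf, bdg] = ab\,d\{f,g\} + a\{f,b\}dg - b\{g,a\}df$ of Example \ref{differential} and unwinding the commutator $[\gamma(a)\delta(f), \gamma(b)\delta(g)]$ in $B$ using the Leibniz rule for $\delta$ together with both compatibility hypotheses.

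Next, the universal property of the ordinary enveloping algebra produces a unique algebra map $\bar\phi: U(A \rtimes \Omega_A) \to B$ extending $\phi$. To descend $\bar\phi$ to $V(A, \Omega_A) = U(A \rtimes \Omega_A)/P$, I check that $\bar\phi$ annihilates every generator $(a \cdot z)' - a'z'$ of $P$. For $z = b + \omega$ one has $a \cdot z = ab + a\omega$ in $A \rtimes \Omega_A$, so
\[
\phi(a \cdot z) = \gamma(ab) + \tilde\delta(a\omega) = \gamma(a)\gamma(b) + \gamma(a)\tilde\delta(\omega) = \phi(a)\phi(z),
\]
using that $\gamma$ is multiplicative and $\tilde\delta$ is $A$-linear. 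Hence $\bar\phi$ factors through a unique algebra map $\lambda: V(A, \Omega_A) \to B$. Commutativity of the diagram, $\lambda \circ \alpha = \gamma$ and $\lambda \circ \beta = \delta$, is immediate from the definitions, and uniqueness of $\lambda$ follows because $V(A, \Omega_A)$ is generated as an algebra by $\alpha(A) \cup \beta(A)$ (since $\Omega_A$ is generated as an $A$-module by exact differentials).

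The main obstacle is the third subcase of the Lie-algebra-map check for $\phi$: the equality $[\tilde\delta(X), \tilde\delta(Y)] = \phi([X, Y])$ for $X = adf$, $Y = bdg$ requires a careful expansion in which the four terms produced by the commutator $[\gamma(a)\delta(f),\, \gamma(b)\delta(g)]$ must recombine---via $\delta(\{f,g\}) = [\delta(f), \delta(g)]$, $\gamma(\{f,b\}) = [\delta(f), \gamma(b)]$, and the Leibniz rule for $\delta$---to match $\gamma(ab)\delta(\{f,g\}) + \gamma(a\{f,b\})\delta(g) - \gamma(b\{g,a\})\delta(f)$ on the nose.
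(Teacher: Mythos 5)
Your proposal is correct and follows essentially the same route as the paper's proof: view $B$ as an $A$-module via $\gamma$, use the derivation property of $\delta$ to get an $A$-linear map $\Omega_A\rightarrow B$, assemble the Lie algebra map on $A\rtimes\Omega_A$, and descend the induced map on $U(A\rtimes\Omega_A)$ modulo the ideal $P$. The only difference is that you spell out details the paper leaves implicit (the Lie-map verification, the vanishing on the generators of $P$, and uniqueness via generation by $\alpha(A)\cup\beta(A)$), and your sketched third subcase does indeed close up using $[\delta(f),\delta(g)]=\delta(\{f,g\})$ and $\gamma(\{f,b\})=[\delta(f),\gamma(b)]$.
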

\begin{proof}
As in the construction of the enveloping algebra of a Lie-Rinehart algebra, we identify $V(A, \Omega_A)$ with $U/P$, where $U=U(A\rtimes \Omega_A)$ and $P$ is the two-sided ideal generated by all elements of the form $(a.z)'-a'z'$ with $a\in A$ and $z\in A\rtimes \Omega_A$.

First, the algebra $B$ becomes an $A$-module via the algebra map $\gamma$, i.e. $a\cdot m:=\gamma(a)m$ for any $a\in A$ and $m\in B$. Now the condition $\delta(ab)=\gamma(a)\delta(b)+\gamma(b)\delta(b)$ just says that $\delta$ is a derivation. Therefore there is an $A$-module map $\theta: \Omega_A\rightarrow B$ such that $\delta=\theta\circ d$. Consequently, we have a well-defined $A$-linear map from $A\rtimes \Omega_A$ to $B$ sending $a+bdf$ to $\gamma(a)+b\cdot\delta(f)=\gamma(a)+\gamma(b)\delta(f)$. This map is a Lie algebra map and thus induces a map $U\rightarrow B$ which factors through $U/P=V(A, \Omega_A)$. Denote this map by $\lambda: V(A, \Omega_A)\rightarrow B$. The commutativity of the diagram is clear from the construction.
\end{proof}

\begin{proposition}\label{iso}
Let $A$ be a Poisson algebra. Then $A^e\cong V(A, \Omega_A)$. To be precise, there is a unique isomorphism $\Lambda$ such that the diagram
\[
\begin{CD}
A@>\alpha>> V(A, \Omega_A)@<\beta<< A\\
@V=VV   @V\Lambda VV   @VV=V\\
A@>m>> A^e@<h<< A\\
\end{CD}
\]
 commutes.
\end{proposition}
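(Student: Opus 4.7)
The strategy is to play the two universal properties against each other: the defining relations (5.1)--(5.5) make $A^e$ the universal object receiving a pair of structure maps $(m,h)$ from $A$, while Proposition \ref{universal} makes $V(A,\Omega_A)$ the universal object receiving a pair of maps $(\alpha,\beta)$ satisfying the same kind of compatibilities (cf.\ Lemma \ref{structuremaps}).

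First, I apply Proposition \ref{universal} with $B=A^e$, $\gamma=m$, and $\delta=h$. Relation (\ref{5.1}) says $m$ is an algebra map, relation $h_{\{x,y\}}=h_xh_y-h_yh_x$ says $h$ is a Lie algebra map, and the mixed relations (\ref{hxy}) and $m_{\{x,y\}}=[h_x,m_y]$ are exactly the hypotheses
\[
\gamma(\{a,b\})=[\delta(a),\gamma(b)],\qquad \delta(ab)=\gamma(a)\delta(b)+\gamma(b)\delta(a)
\]
required by Proposition \ref{universal}. Hence there is a unique algebra map
\[
\Lambda:V(A,\Omega_A)\longrightarrow A^e
\]
with $\Lambda\circ\alpha=m$ and $\Lambda\circ\beta=h$, which is the commutativity claimed in the statement.

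For the inverse, I invoke the universal property of $A^e$ as formulated in \cite{Oh2} (Definition 1): an algebra $B$ equipped with an algebra map $\mu:A\to B$ and a Lie algebra map $\eta:A\to B$ satisfying $\mu(\{a,b\})=[\eta(a),\mu(b)]$ and $\eta(ab)=\mu(a)\eta(b)+\mu(b)\eta(a)$ receives a unique algebra map $A^e\to B$ commuting with the structure maps. Applying this to $B=V(A,\Omega_A)$ with $\mu=\alpha$ and $\eta=\beta$, and using Lemma \ref{structuremaps} to verify the required identities, produces a unique algebra map
\[
\Lambda':A^e\longrightarrow V(A,\Omega_A)
\]
with $\Lambda'\circ m=\alpha$ and $\Lambda'\circ h=\beta$.

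To finish, I observe that $\Lambda'\circ\Lambda$ is an algebra endomorphism of $V(A,\Omega_A)$ whose composition with $\alpha$ and $\beta$ is again $\alpha$ and $\beta$; by the uniqueness clause of Proposition \ref{universal} (taking $B=V(A,\Omega_A)$, $\gamma=\alpha$, $\delta=\beta$), it must equal the identity. The same argument, using the uniqueness in the universal property of $A^e$, shows $\Lambda\circ\Lambda'=\mathrm{id}_{A^e}$. Thus $\Lambda$ is the asserted isomorphism, and its uniqueness follows from the uniqueness clause of Proposition \ref{universal}. The only genuinely non-formal step is the verification that the hypotheses of the two universal properties are met, which is handled by the defining relations (\ref{5.1})--(\ref{5.5}) on one side and by Lemma \ref{structuremaps} on the other; the main conceptual point is simply recognizing that $A^e$ and $V(A,\Omega_A)$ solve the same universal problem.
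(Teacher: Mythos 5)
Your proposal is correct and is essentially the paper's proof: the paper also obtains $\Lambda$ from Proposition \ref{universal} applied to $(m,h)$ and inverts it via the universal property of $A^e$ from \cite[Definition 1]{Oh2}, with the relations (\ref{5.1})--(\ref{5.5}) and Lemma \ref{structuremaps} supplying the needed compatibilities. You have merely written out the standard uniqueness argument that the paper leaves implicit.
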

\begin{proof}
This is a consequence of Proposition \ref{universal} and the universal property of $A^e$ \cite[Definition 1]{Oh2}.
\end{proof}

In \cite{Ri}, a filtration is defined on $V(A, L)$ for any Lie-Rinehart algebra $(A, L)$ over $k$. In particular, in our case where $A$ is a Poisson algebra, the algebra $V(A, \Omega_A)$ carries a filtration which naturally passes to $A^e$ via the isomorphism $\Lambda$ as in Proposition \ref{iso}. The filtration $\{F_n\}_{n\ge 0}$ is such that $A\subset F_0 A^e$ and $h_x\in F_1A^e$ for any $x\in A$. Now the theorem \cite[Theorem 3.1]{Ri} says the following.

\begin{corollary}\label{PBW}
Let $A$ be a Poisson algebra. If the K\"ahler differential $\Omega_A$ is a projective $A$-module, then there is an $A$-algebra isomorphism 
\begin{equation}
S_A(\Omega_A)\cong \gr_F A^e,
\end{equation}
where $S_A(\Omega_A)$ is the symmetric $A$-algebra on $\Omega_A$.
\end{corollary}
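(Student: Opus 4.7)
The plan is to reduce the statement to Rinehart's original PBW theorem for Lie-Rinehart algebras \cite[Theorem 3.1]{Ri} by transporting everything across the isomorphism $\Lambda\colon V(A,\Omega_A)\xrightarrow{\sim} A^e$ of Proposition \ref{iso}. Since $\Lambda$ is an algebra isomorphism that intertwines the canonical structure maps ($\alpha$ with $m$, and $\beta$ with $h$), it is immediate that $\Lambda$ carries the Rinehart filtration on $V(A,\Omega_A)$ to the filtration $\{F_n A^e\}_{n\geq 0}$: the latter was defined precisely by decreeing that $m(A)\subset F_0 A^e$ and $h(A)\subset F_1 A^e$, matching the generating data of the Rinehart filtration on $V(A,\Omega_A)$, which puts $\alpha(A)$ in degree $0$ and $\beta(A)$ in degree $1$. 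Hence $\Lambda$ induces a graded algebra isomorphism $\gr V(A,\Omega_A)\cong\gr_F A^e$.

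Next, I would invoke Rinehart's theorem directly: for any Lie-Rinehart algebra $(A,L)$ over $k$ with $L$ projective as an $A$-module, there is a natural isomorphism of graded $A$-algebras $S_A(L)\cong\gr V(A,L)$. Applying this to $(A,\Omega_A)$, which is indeed a Lie-Rinehart algebra by Example \ref{differential}, and using the hypothesis that $\Omega_A$ is projective over $A$, yields $S_A(\Omega_A)\cong\gr V(A,\Omega_A)$. Composing with the identification from the previous paragraph produces the desired isomorphism $S_A(\Omega_A)\cong\gr_F A^e$.

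The only nontrivial verification is that the filtration on $A^e$ described in the statement genuinely agrees, as a filtered algebra, with the pull-back via $\Lambda$ of Rinehart's filtration on $V(A,\Omega_A)$; this amounts to checking compatibility on generators and closure under products, which is essentially forced by relations \eqref{5.1}--\eqref{5.5}, since $m_{xy}=m_xm_y$ keeps products of $m$'s in $F_0$, while $h_{xy}=m_yh_x+m_xh_y$ and $m_{\{x,y\}}=[h_x,m_y]$ together guarantee that no relation forces higher-order $h$-terms into lower filtration pieces. The core content of the corollary thus lies entirely in \cite[Theorem 3.1]{Ri}; the contribution here is the identification $A^e\cong V(A,\Omega_A)$ established in Proposition \ref{iso}, which makes Rinehart's theorem applicable. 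The main (minor) obstacle is therefore bookkeeping: making sure the two filtrations really coincide under $\Lambda$, which I expect to be routine given the explicit form of the structure maps.
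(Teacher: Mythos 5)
Your proposal is correct and follows essentially the same route as the paper: identify $A^e$ with $V(A,\Omega_A)$ via the isomorphism $\Lambda$ of Proposition \ref{iso} and then apply Rinehart's PBW theorem \cite[Theorem 3.1]{Ri} to the Lie--Rinehart algebra $(A,\Omega_A)$, using projectivity of $\Omega_A$. The ``filtration matching'' you flag as the only nontrivial step is in fact vacuous here, since the paper defines $\{F_nA^e\}$ precisely as the transport of Rinehart's filtration along $\Lambda$.
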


Corollary \ref{PBW} can be viewed as a Poincar\'e-Birkhoff-Witt theorem for the enveloping algebra of a Poisson algebra. In particular, it recovers the result in \cite{OPS}.

\begin{remark}\label{generalcase}
In general, the canonical $A$-module map $S_A(\Omega_A)\rightarrow \gr_F A^e$ is surjective. So at least we know that as an $A$-module, $A^e$ is generated by elements of the form 
$$h_{a_1}\cdots h_{a_s},$$
where $a_i\in A$ and $s$ is any integer.
\end{remark}

\subsection{Poisson cohomology and homology.} 
The relation of Poisson (co)homology with $V(A, \Omega_A)$ is studied in \cite{Ri} and \cite{H}. In view of Proposition \ref{iso}, we can reformulate that by using the Poisson enveloping algebra $A^e$. 

Let $A$ be a Poisson algebra. Consider the complex $C_*$ where $C_n=0$ for $n<0$ and $C_n=A^e\otimes_A\Omega_{A/k}^n$ for $n\ge 0$. Conventionally, we take $\Omega_{A/k}^0=A$. The differential $b$ is given by
\begin{align*}
b(a_0\otimes \de a_1\de a_2\cdots \de a_{n})&=\sum_{i=1}^{n}(-1)^{i+1}a_0h_{a_i}\otimes \de a_1\de a_2\cdots\hat{\de a_i} \cdots\de a_{n}\\
&+ \sum_{1\le i<j\le n}(-1)^{i+j}a_0\otimes \de \{a_i, a_j\} \de a_1\cdots \hat{\de a_i} \cdots\hat{\de a_j} \cdots\de a_n.
\end{align*}

\begin{proposition} Let $A$ be a Poisson algebra and suppose that $\Omega_A$ is projective over $A$. Then the complex $C_*$ defined above is a projective resolution of $A$ as a left $A^e$-module.
\end{proposition}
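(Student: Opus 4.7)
The plan is to identify $C_*$ with the Chevalley--Eilenberg--Rinehart resolution of the Lie-Rinehart algebra $(A,\Omega_A)$ under the isomorphism $A^e\cong V(A,\Omega_A)$ of Proposition~\ref{iso}, and then carry out the classical filtered-complex argument of \cite{Ri}. The statement has two ingredients: projectivity of each $C_n$ as a left $A^e$-module, and exactness of the augmented complex $C_*\xrightarrow{\xi}A$, with $\xi$ the canonical $A^e$-module map from Proposition~\ref{decomp}.

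Projectivity is immediate from the hypothesis. Since $\Omega_A$ is $A$-projective, every exterior power $\Omega_{A/k}^n=\bigwedge^n_A\Omega_A$ is again $A$-projective: choosing $Q$ with $\Omega_A\oplus Q\cong F$ free, the free module $\bigwedge^n F$ contains $\bigwedge^n \Omega_A$ as a direct summand. Extending scalars to $A^e$ preserves this, so $C_n=A^e\otimes_A\Omega_{A/k}^n$ is a direct summand of a free $A^e$-module.

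For exactness, I would filter $C_*$ by
\begin{equation*}
F_pC_n:=F_{p-n}A^e\otimes_A\Omega_{A/k}^n,
\end{equation*}
using the Rinehart filtration $\{F_nA^e\}$ introduced just before Corollary~\ref{PBW}. In the differential $b$, the first sum multiplies the $A^e$-component by some $h_{a_i}\in F_1A^e$ and thus lands in $F_{p-n+1}A^e\otimes\Omega_{A/k}^{n-1}=F_pC_{n-1}$, while the second sum keeps the $A^e$-component inside $F_{p-n}A^e\subset F_{p-1}C_{n-1}$. Hence on the associated graded only the first sum survives, and combining with the PBW identification $\gr^F A^e\cong S_A(\Omega_A)$ of Corollary~\ref{PBW}, $\gr^F C_*$ becomes the classical Koszul complex $S_A(\Omega_A)\otimes_A\bigwedge^{\bullet}_A\Omega_A$, which is a resolution of $A$ because $\Omega_A$ is a projective $A$-module.

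Since the filtration is exhaustive and bounded below (indeed $F_pC_n=0$ whenever $p<n$), the spectral sequence of the filtration converges to $H_*(C_*)$ and collapses at $E^1$ thanks to the acyclicity of $\gr^F C_*$, giving $H_n(C_*)=0$ for $n>0$ and $H_0(C_*)=A$. The main point requiring care is the filtration bookkeeping: one must verify that the bracket contributions in the second sum of $b$ strictly lower the Rinehart filtration, which rests on the defining relation $h_{\{a,b\}}=h_ah_b-h_bh_a$ placing the commutator at Rinehart degree at most $1$ while the product $h_ah_b$ sits in degree $2$---precisely the compatibility underlying Corollary~\ref{PBW}.
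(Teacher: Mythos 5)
Your proposal is correct and follows essentially the same route as the paper: the paper proves this proposition simply by combining Proposition \ref{iso} with the citation of \cite[Lemma 4.1]{Ri}, and what you have written is precisely Rinehart's proof of that lemma (the filtration by Rinehart degree, the PBW identification of the associated graded with $S_A(\Omega_A)$, and the acyclicity of the resulting Koszul complex for a projective module) transported to $A^e$ via the isomorphism $A^e\cong V(A,\Omega_A)$. The only minor quibble is attributional: the strict drop in filtration for the second sum of $b$ is just because the $A^e$-factor is unchanged while the homological degree falls, rather than resting directly on $h_{\{a,b\}}=h_ah_b-h_bh_a$; that relation enters instead through the commutativity of $\gr_F A^e$ underlying Corollary \ref{PBW}, as you note.
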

\begin{proof}
This is a consequence of Proposition \ref{iso} and \cite[Lemma 4.1]{Ri}.
\end{proof}

For a Poisson module $M$ over a Poisson algebra $A$, let $\mathfrak{X}^s(A, M)=\Hom_A(\Omega^s_{A}, M)$. The Poisson coboundary operator $\delta^i: \mathfrak{X}^s(A, M)\rightarrow \mathfrak{X}^{s+1}(A, M)$ is defined, for any $Q\in \mathfrak{X}^s(A, M)$, by
$$\delta^i(Q)(f_0, ..., f_s)=\sum_{i=0}^{s}(-1)^i\{f_i, Q(f_0, ..., \hat{f_i}, ...f_s)\}_M+\sum_{0\le i<j\le s}(-1)^{i+j}Q(\{f_i, f_j\}, f_0, ..., \hat{f_i}, ..., \hat{f_j},...).$$
The cohomology of the complex $\mathfrak{X}^*(A, M)$ is called the Poisson cohomology with coefficients in $M$ and denoted by $\HP^*(A, M)$. If $M=A$, then $\HP^*(A)$ is used in stead of $\HP^*(A, A)$. Similarly, one can define $\HP_*(A, N)$, the Poisson homology with coefficients in $N$, where $N$ is a right Poisson module over $A$ (See, for example, \cite{ZVZ}).

In \cite{H}, Huebschmann proved that, when $\Omega_A$ is projective over $A$, 
\begin{equation*}
\HP^*(A, M)\cong \Ext^*_{V(A, \Omega_A)}(A, M),
\end{equation*}
and 
\begin{equation*}
\HP_*(A, N)\cong \Tor_*^{V(A, \Omega_A)}(N, A),
\end{equation*}
where $M$ is a left Poisson module and $N$ a right Poisson module. Consequently, 
\begin{proposition}
Let $A$ be a Poisson algebra and suppose that $\Omega_A$ is projective over $A$. Then
\begin{equation}
\HP^*(A, M)\cong \Ext^*_{A^e}(A, M),
\end{equation}
and 
\begin{equation}
\HP_*(A, N)\cong \Tor_*^{A^e}(N, A),
\end{equation}
for any left Poisson module $M$ and any right Poisson module $N$.
\end{proposition}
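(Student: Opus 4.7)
The plan is to deduce this directly from Huebschmann's theorem cited just above the statement, combined with the algebra isomorphism $\Lambda\colon V(A,\Omega_A)\xrightarrow{\sim}A^e$ established in Proposition \ref{iso}. The only genuine content is checking that $\Lambda$ is compatible with the module-theoretic interpretations on both sides so that the $\Ext$ and $\Tor$ groups can be transported along it.

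First I would recall that by Proposition \ref{iso} the map $\Lambda$ fits into a commutative diagram with the structure maps $\alpha,\beta$ of $V(A,\Omega_A)$ matching $m,h$ of $A^e$. In particular, pullback along $\Lambda$ sets up an isomorphism of module categories
\begin{equation*}
\Lambda^{\ast}\colon A^e\text{-}\Mod\;\xrightarrow{\sim}\;V(A,\Omega_A)\text{-}\Mod,
\end{equation*}
and under the standard dictionary (Poisson modules = $A^e$-modules via $m,h$; Lie-Rinehart modules = $V(A,\Omega_A)$-modules via $\alpha,\beta$) this is exactly the identification that realizes a left Poisson $A$-module $M$ as a $V(A,\Omega_A)$-module with $a\cdot m=\alpha(a)m$ and $da\cdot m=\beta(a)m=\{a,m\}_M$. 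A parallel statement holds on the right, for a right Poisson module $N$. In both cases the $A$-bimodule structure inherited from either side agrees, so in particular the trivial module $A$ is identified on both sides.

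Next, since $\Lambda$ is an isomorphism of associative $k$-algebras, it induces canonical isomorphisms of derived functors
\begin{equation*}
\Ext^{\ast}_{V(A,\Omega_A)}(A,M)\;\cong\;\Ext^{\ast}_{A^e}(A,M),\qquad \Tor^{V(A,\Omega_A)}_{\ast}(N,A)\;\cong\;\Tor^{A^e}_{\ast}(N,A),
\end{equation*}
for any left, respectively right, Poisson module. Indeed, a projective resolution of $A$ over $V(A,\Omega_A)$ (for instance the complex $C_\ast$ from the preceding proposition, transported through $\Lambda$) is carried by $\Lambda$ to a projective resolution of $A$ over $A^e$, and the $\Hom$ and tensor computations coincide term by term.

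Finally, invoking Huebschmann's theorem \cite{H}, which gives
\begin{equation*}
\HP^{\ast}(A,M)\cong\Ext^{\ast}_{V(A,\Omega_A)}(A,M),\qquad \HP_{\ast}(A,N)\cong\Tor^{V(A,\Omega_A)}_{\ast}(N,A)
\end{equation*}
under the hypothesis that $\Omega_A$ is projective over $A$, and concatenating with the isomorphisms above yields the stated conclusion. The only subtlety to verify along the way is the compatibility of module structures under $\Lambda$; this is the main (but essentially formal) point, and it follows immediately from the commutativity of the diagram in Proposition \ref{iso} together with the defining relations \eqref{5.1}--\eqref{5.5} of $A^e$, which translate term-by-term into the Lie-Rinehart axioms for $(A,\Omega_A)$.
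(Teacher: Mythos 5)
Your proposal is correct and follows exactly the route the paper takes: it cites Huebschmann's isomorphisms $\HP^*(A,M)\cong \Ext^*_{V(A,\Omega_A)}(A,M)$ and $\HP_*(A,N)\cong \Tor_*^{V(A,\Omega_A)}(N,A)$ and transports them along the isomorphism $\Lambda$ of Proposition \ref{iso}. The only difference is that you spell out the (routine) compatibility of module structures under $\Lambda$, which the paper leaves implicit.
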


\medskip
\section{Hopf structures on Poisson enveloping algebras}\label{Hopf}
In \cite{Oh2}, it is shown that if $B$ is a Poisson bialgebra, then $B^e$ carries a bialgebra structure as well.
\begin{theorem}\label{Hopfstr}\cite[Theorem 10]{Oh2}
Let $B$ be a Poisson bialgebra with comultiplication $\Delta$ and counit $\e$. Then $B^e$ has a bialgebra structure with comultiplication $\Delta^e$ and counit $\e^e$ such that
$$\Delta^em=(m\otimes m)\Delta, \,\,\,\, \e^em=\e,$$
and 
$$\Delta^eh=(m\otimes h+h\otimes m)\Delta, \,\,\,\, \e^eh=0.$$
Morever, if $B$ is a Hopf algebra with antipode $S$, then $B^e$ is a Hopf algebra with antipode $S^e$ such that 
$$S^em=mS, \,\,\,\, S^eh=hS.$$
\end{theorem}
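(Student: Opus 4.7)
The plan is to construct each structure map on $B^e$ via the universal property of the Poisson enveloping algebra (Proposition~\ref{iso} together with \cite[Definition~1]{Oh2}): an algebra homomorphism $A^e\to C$ into an associative $k$-algebra $C$ is determined, uniquely and without further obstruction, by a pair consisting of an algebra map $\gamma:A\to C$ and a Lie map $\delta:A\to C$ (for the commutator bracket on $C$) subject to the two mixed compatibilities
\[
\gamma(\{a,b\})=[\delta(a),\gamma(b)]\qquad\text{and}\qquad \delta(ab)=\gamma(a)\delta(b)+\gamma(b)\delta(a).
\]
Every structure map on $B^e$ is produced this way, and the uniqueness clause will force the bialgebra axioms.

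For the comultiplication, take $C=B^e\otimes B^e$ with its tensor-product algebra structure and set $\gamma:=(m\otimes m)\Delta$ and $\delta:=(m\otimes h+h\otimes m)\Delta$. The map $\gamma$ is an algebra morphism since $\Delta$ is a bialgebra map and $m\otimes m$ is multiplicative. Verifying that $\delta$ is a Lie map into $B^e\otimes B^e$ and that the two mixed compatibilities hold is a direct---if bookkeeping-intensive---computation combining two ingredients: first, $\Delta$ is a morphism of Poisson algebras, so $\Delta\{a,b\}=\{\Delta a,\Delta b\}$ for the bracket on $B\otimes B$ given by Proposition~\ref{tensor}; and second, the defining relations of $B^e$ applied slot-by-slot, in particular $h_{xy}=m_xh_y+m_yh_x$, $[h_x,h_y]=h_{\{x,y\}}$, and $[h_x,m_y]=m_{\{x,y\}}$. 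The universal property then produces a unique algebra map $\Delta^e:B^e\to B^e\otimes B^e$ with the stated values on $m(B)$ and on $h(B)$. The counit $\e^e$ is produced identically, with $C=k$, $\gamma=\e$, and $\delta=0$; the required compatibilities collapse to $\e(\{a,b\})=0$ (derived in the preliminaries) and a triviality.

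Coassociativity and the counit axiom follow at once from uniqueness: $(\Delta^e\otimes\mathrm{id})\Delta^e$ and $(\mathrm{id}\otimes\Delta^e)\Delta^e$ are both algebra maps $B^e\to B^e\otimes B^e\otimes B^e$ whose restrictions to $m(B)$ and $h(B)$ agree by coassociativity of $\Delta$ in $B$, hence they agree on all of $B^e$; the counit axiom is handled the same way, giving the bialgebra structure. For the antipode, assume $B$ is Hopf and recall from the preliminaries that $S$ is both an algebra anti-morphism and a Poisson anti-morphism $S(\{a,b\})=\{Sb,Sa\}$. Apply the universal property with $C=(B^e)^{\mathrm{op}}$, $\gamma=m\circ S$, and $\delta=h\circ S$: the anti-morphism properties of $S$ on both the multiplicative and the Poisson structures are exactly what is needed to turn these into compatible data into the opposite algebra. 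The resulting algebra map $B^e\to(B^e)^{\mathrm{op}}$ is the desired algebra anti-homomorphism $S^e$ with $S^em=mS$ and $S^eh=hS$. The antipode identity $\mu\circ(S^e\otimes\mathrm{id})\circ\Delta^e=u\circ\e^e=\mu\circ(\mathrm{id}\otimes S^e)\circ\Delta^e$ is not itself a relation between algebra maps, so the uniqueness trick does not apply directly; instead one checks it on the algebra generators $m(B)\cup h(B)$, where it reduces to the antipode axiom in $B$, and then extends to products by noting that the locus in $B^e$ on which the convolution identity holds is closed under multiplication, using multiplicativity of $\Delta^e$ together with anti-multiplicativity of $S^e$.

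The main obstacle throughout is the verification that $\delta=(m\otimes h+h\otimes m)\Delta$ is a Lie map and satisfies the derivation-type compatibility with $\gamma$: both $\delta(\{a,b\})$ and $[\delta(a),\delta(b)]$ expand into four-term sums in $B^e\otimes B^e$ whose coincidence rests on a delicate cancellation driven by the interplay of $h_{xy}=m_xh_y+m_yh_x$ with the commutator relations $[h_x,h_y]=h_{\{x,y\}}$ and $[h_x,m_y]=m_{\{x,y\}}$. It is precisely at this step that the Poisson-bialgebra compatibility $\Delta\{a,b\}=\{\Delta a,\Delta b\}$ is genuinely used; everything else is formal manipulation through the universal property.
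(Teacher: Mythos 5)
Your construction of the bialgebra structure is sound and is essentially the standard argument (the paper itself offers no proof, citing \cite[Theorem 10]{Oh2}): the data $\gamma=(m\otimes m)\Delta$, $\delta=(m\otimes h+h\otimes m)\Delta$ do satisfy the hypotheses of the universal property --- the four-term cancellation you anticipate really does close up, using $h_{xy}=m_xh_y+m_yh_x$, the two commutator relations, and $\Delta\{a,b\}=\{\Delta a,\Delta b\}$ --- and coassociativity, the counit axiom, and the existence of the algebra anti-morphism $S^e$ with $S^em=mS$, $S^eh=hS$ (via the universal property into $(B^e)^{\mathrm{op}}$, using $S\{a,b\}=\{Sb,Sa\}$) all go through exactly as you say.

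The gap is in the last step, the verification of the convolution identity on the generators $h(B)$: it does \emph{not} reduce to the antipode axiom of $B$. Computing with the relations of $B^e$,
\begin{align*}
\mu(S^e\otimes \mathrm{id})\Delta^e(h_x)&=m_{S(x_1)}h_{x_2}+h_{S(x_1)}m_{x_2}
= h_{S(x_1)x_2}+m_{\{S(x_1),x_2\}}
=\e(x)h_1+m_{\{S(x_1),x_2\}}=m_{\{S(x_1),x_2\}},
\end{align*}
so besides $S(x_1)x_2=\e(x)1$ (and $h_1=0$) you need the additional identity $\{S(x_1),x_2\}=0$ in $B$ (and $\{x_1,S(x_2)\}=0$ for the other side). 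This identity is not a consequence of the Poisson Hopf axioms and you never address it; in fact it fails for the paper's own Example \ref{typeA}: with $\Delta(x)=x\otimes1+g\otimes x$, $S(x)=-g^{-1}x$ and $\{x,g\}=\lambda gx$ one gets $\{S(x_1),x_2\}=\{g^{-1},x\}=\lambda g^{-1}x\neq0$, and since $m$ is injective the displayed expression is nonzero. Concretely, in that example the antipode axiom forces $S^e(h_x)=-m_{g^{-1}}h_x-h_{g^{-1}}m_x=h_{S(x)}-m_{\{g^{-1},x\}}\neq h_{S(x)}$, so under the conventions of this paper (relations \eqref{5.1}--\eqref{5.5} and the tensor bracket of Proposition \ref{tensor}) the formula $S^eh=hS$ cannot satisfy the antipode identity in general. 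Your ``check on generators, then use that the locus is closed under multiplication'' scheme is fine as a scheme (the closure argument is standard), but the generator check itself breaks down on $h(B)$ unless one either imposes the extra hypothesis $\{S(b_1),b_2\}=0=\{b_1,S(b_2)\}$ or corrects the antipode formula; as written, the antipode part of your proof does not go through.
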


From now on, we will drop the superscript $e$ for the comultiplication, counit and antipode on $B^e$ if there is no confusion. The following proposition is expected.
\begin{proposition}
The category of Poisson modules over a Poisson bialgebra $B$ is equivalent to the module category over $B^e$ as monoidal categories.
\end{proposition}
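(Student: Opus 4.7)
The plan is to upgrade the known equivalence of categories (stated in the proposition after the definition of $A^e$) into an equivalence of monoidal categories by verifying that the two natural tensor products match under the equivalence.

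First, recall the functors implementing the equivalence. Given a Poisson module $(M,\{\cdot,\cdot\}_M)$ over $B$, the corresponding $B^e$-module structure is defined on the underlying space $M$ by $m_a\cdot x:=ax$ and $h_a\cdot x:=\{a,x\}_M$; conversely, any $B^e$-module $M$ is turned into a Poisson module by the same two formulas read in reverse. These functors are mutually inverse by the cited result and are the identity on morphisms, so I only need to match the monoidal data.

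Next, I will compare the two tensor products. On the Poisson side, $M\otimes N$ carries the diagonal Poisson module structure recorded in the lemma following Definition \ref{Pmodules}, namely
\begin{align*}
a\cdot(m\otimes n) &= a_1 m\otimes a_2 n,\\
\{a,m\otimes n\} &= a_1 m\otimes\{a_2,n\}+\{a_1,m\}\otimes a_2 n.
\end{align*}
On the $B^e$-module side, the tensor product of two $B^e$-modules is obtained through the bialgebra comultiplication $\Delta^e$ of Theorem \ref{Hopfstr}. Using the formulas $\Delta^e m_a=(m\otimes m)\Delta(a)=\sum m_{a_1}\otimes m_{a_2}$ and $\Delta^e h_a=(m\otimes h+h\otimes m)\Delta(a)=\sum m_{a_1}\otimes h_{a_2}+\sum h_{a_1}\otimes m_{a_2}$, I compute
\begin{align*}
m_a\cdot(m\otimes n) &= \sum a_1 m\otimes a_2 n,\\
h_a\cdot(m\otimes n) &= \sum a_1 m\otimes\{a_2,n\}+\sum\{a_1,m\}\otimes a_2 n.
\end{align*}
These agree on the nose with the diagonal Poisson module structure, so the equivalence of categories carries the Poisson tensor product to the $B^e$-tensor product.

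Finally, I check that the unit objects and coherence constraints are identified. The unit in $B^e\text{-mod}$ is $k$ with the action through the counit $\epsilon^e$; since $\epsilon^e m_a=\epsilon(a)$ and $\epsilon^e h_a=0$, this is exactly the trivial Poisson $B$-module structure on $k$ (multiplication via $\epsilon$, zero Poisson action), which is the unit on the Poisson side. The associators and unitors on both sides are the standard ones on the underlying vector space tensor products, so the natural isomorphism $F(M\otimes N)\cong F(M)\otimes F(N)$ is literally the identity, and the pentagon and triangle axioms are automatic.

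The only real content above is the bracket calculation for $h_a\cdot(m\otimes n)$, and the sole place where something could fail is the compatibility $\Delta^e h=(m\otimes h+h\otimes m)\Delta$, which is precisely how the bialgebra structure on $B^e$ was designed in Theorem \ref{Hopfstr}; thus no genuine obstacle remains, and the proof reduces to the direct verification sketched above.
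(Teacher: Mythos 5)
Your proof is correct and is exactly the verification the paper has in mind: the paper states this proposition without proof (as ``expected''), and the intended argument is precisely to combine Umirbaev's equivalence of categories with Oh's formulas $\Delta^e m=(m\otimes m)\Delta$, $\Delta^e h=(m\otimes h+h\otimes m)\Delta$, $\e^e m=\e$, $\e^e h=0$ and check on the generators $m_a,h_a$ that the induced tensor action coincides with the Poisson module structure on $M\otimes N$ and that the units agree. Your computation does this correctly, and since the comparison isomorphisms are identities on underlying vector spaces, the coherence conditions are indeed automatic.
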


\subsection{The Poisson enveloping algebra of $PS(\mathfrak{g})$}
The Poisson enveloping algebra of $PS(\mathfrak{g})$ is calculated in \cite[Example 11]{Oh2}. Here we are going to give a more Hopf-theoretical description.

Let $\mathfrak{g}$ be a Lie algebra and let $V$ be a copy of $\mathfrak{g}$ as a $k$-space. Then $V$ becomes a $\mathfrak{g}$-module via the adjoint action, which naturally extends to $S(V)$. In fact, $S(V)$ becomes a $U(\mathfrak{g})$-module algebra in the sense of \cite[Definition 4.1.1]{Mo}. Therefore one can form the smash product $S(V)\#U(\mathfrak{g})$.

It is easy to check that $S(V)\#U(\mathfrak{g})\cong U(V\rtimes \mathfrak{g})$, where $V\rtimes \mathfrak{g}$ is the semidirect product of $V$ with $\mathfrak{g}$ \cite[7.4.9]{We}. First, there is a Lie algebra map from $V\rtimes \mathfrak{g}$ to $S(V)\#U(\mathfrak{g})$ sending $(v, g)$ to $v\#1+1\#g$. By the universal property of enveloping algebra, this map induces an algebra map $U(V\rtimes \mathfrak{g})\rightarrow S(V)\#U(\mathfrak{g})$, which is actually an isomorphism.

\begin{proposition}
Suppose that the base field $k$ has characteristic $0$. Let $B$ be the Poisson Hopf algebra $PS(\mathfrak{g})$. Then $B^e\cong U(V\rtimes \mathfrak{g})$ as Hopf algebras.
\end{proposition}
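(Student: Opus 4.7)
The plan is to construct a Hopf algebra homomorphism $\Psi : U(V \rtimes \mathfrak{g}) \to B^e$ and then show it is an isomorphism via a PBW dimension count. The starting observation is that the defining relations (\ref{5.1})--(\ref{5.5}) of $B^e$, restricted to elements $x, y \in \mathfrak{g} \subset S(\mathfrak{g}) = B$, specialize to $[m_x, m_y] = 0$ (since $m_{xy}=m_{yx}$ by commutativity of $S(\mathfrak{g})$ in (\ref{5.1})), $[h_x, h_y] = h_{[x,y]}$, and $[h_x, m_y] = m_{[x,y]}$. Under the identification $v \leftrightarrow m_v$, $g \leftrightarrow h_g$, these are precisely the bracket relations of the semidirect product $V \rtimes \mathfrak{g}$, where $V$ is a copy of $\mathfrak{g}$ acted on adjointly by $\mathfrak{g}$. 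Hence $v \mapsto m_v$, $g \mapsto h_g$ defines a Lie algebra homomorphism $V \rtimes \mathfrak{g} \to B^e$, which extends uniquely by the universal property of $U(-)$ to an algebra map $\Psi : U(V \rtimes \mathfrak{g}) \to B^e$.

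To see that $\Psi$ is bijective, I would first note surjectivity: applying (\ref{hxy}) inductively on the degree of $a \in S(\mathfrak{g})$, every $h_a$ is a polynomial in the generators $m_x, h_x$ with $x \in \mathfrak{g}$, which together with Remark~\ref{generalcase} shows $B^e$ is generated by the image of $\Psi$. For injectivity I compare associated graded algebras with respect to compatible filtrations. The classical PBW theorem gives $\gr U(V \rtimes \mathfrak{g}) \cong S(V \oplus \mathfrak{g})$. On the other side, $\Omega_{S(\mathfrak{g})}$ is free of rank $\dim \mathfrak{g}$ over $S(\mathfrak{g})$, so Corollary~\ref{PBW} yields
\[
\gr_F B^e \cong S_{S(\mathfrak{g})}(\Omega_{S(\mathfrak{g})}) \cong S(\mathfrak{g}) \otimes S(V) \cong S(\mathfrak{g} \oplus V).
\]
Since $\Psi$ respects the natural filtrations by sending the degree-$1$ generators $v \in V$ and $g \in \mathfrak{g}$ into $F_1 B^e$, the induced map $\gr \Psi$ is a surjection between isomorphic commutative polynomial algebras of equal graded dimension in each piece, hence an isomorphism, and so is $\Psi$.

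For the Hopf structure, each $x \in \mathfrak{g}$ is primitive in $PS(\mathfrak{g})$, so Theorem~\ref{Hopfstr} together with $h_1 = 0$ (which follows from (\ref{hxy}) with $x = 1$, giving $m_y h_1 = 0$ for all $y$ and then $h_1 = m_1 h_1 = 0$) yields $\Delta(m_x) = m_x \otimes 1 + 1 \otimes m_x$ and $\Delta(h_x) = h_x \otimes 1 + 1 \otimes h_x$ in $B^e$. These correspond under $\Psi$ to the primitive generators $v \in V$ and $g \in \mathfrak{g}$ of $U(V \rtimes \mathfrak{g})$. Since $U(V \rtimes \mathfrak{g})$ is generated as an algebra by its primitives, $\Psi$ is automatically a bialgebra map, and antipode compatibility then follows. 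The main obstacle is the dimension comparison on associated graded pieces, since one must check that the filtration $F$ on $B^e$ from Proposition~\ref{iso} and the canonical PBW filtration on $U(V \rtimes \mathfrak{g})$ match up under $\Psi$; an alternative route avoiding this is to construct the inverse $\Phi : B^e \to U(V \rtimes \mathfrak{g})$ directly via Proposition~\ref{universal}, taking $\gamma(x) = x \in V$ and $\delta(x) = x \in \mathfrak{g}$ on $x \in \mathfrak{g}$ and extending by the required algebra and derivation rules, and then verifying that $\Psi \circ \Phi$ and $\Phi \circ \Psi$ act as the identity on generators.
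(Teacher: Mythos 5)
Your overall route is the same as the paper's: the paper defines the Lie algebra map $V\rtimes\mathfrak{g}\to B^e$, $(v,y)\mapsto m_v+h_y$, extends it to an algebra map on $U(V\rtimes\mathfrak{g})$, observes via Theorem \ref{Hopfstr} that the generators are primitive so the map is a Hopf algebra map, and then invokes Corollary \ref{PBW} (or the PBW theorem of Oh--Park--Shin) for bijectivity. Your verification of the semidirect-product relations, of $h_1=0$, of primitivity, and your surjectivity argument via (\ref{hxy}) correctly fill in the steps the paper leaves implicit.

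The step that does not work as written is the associated-graded comparison. With the canonical PBW filtration on $U(V\rtimes\mathfrak{g})$ (both $V$ and $\mathfrak{g}$ in degree $1$) and the Rinehart filtration $F$ on $B^e$ (all of $B=S(\mathfrak{g})$ in $F_0$, $h_x\in F_1$), the filtrations do not match: $\Psi(v)=m_v$ lies in $F_0$, so $\gr\Psi$ kills the degree-one class of every $v\in V$, and in degree $0$ its image is only $k\subset B$; moreover the graded pieces $S^n(V\oplus\mathfrak{g})$ and $S^n_B(\Omega_B)\cong B\otimes S^n(V)$ are not of equal dimension. So the claim "surjection between polynomial algebras of equal graded dimension in each piece, hence an isomorphism" fails for this choice of filtrations --- you flagged this yourself as the main obstacle, and it is a real one. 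The repair is to regrade the source: filter $U(V\rtimes\mathfrak{g})\cong S(V)\#U(\mathfrak{g})$ by the $\mathfrak{g}$-degree only, putting $V$ in degree $0$ and $\mathfrak{g}$ in degree $1$ (this is an algebra filtration since $[\mathfrak{g},V]\subset V$ and $[V,V]=0$). Then $\Psi$ is filtered, both associated graded algebras become $S(\mathfrak{g})\otimes S(V)$ with $\gr\Psi$ matching generators, hence an isomorphism; equivalently, compare the $S(V)$-basis of PBW monomials in $\mathfrak{g}$ with the $B$-basis $h_{x_1}^{i_1}\cdots h_{x_\ell}^{i_\ell}$ of $B^e$ that Corollary \ref{PBW} provides --- which is exactly what the paper's one-line citation amounts to. Your alternative via Proposition \ref{universal} (constructing the inverse from $\gamma$ and $\delta$) is also viable, but note that there you must still check that $\delta$, defined on generators and extended by the Leibniz rule, is well defined, is a Lie map for the Poisson bracket, and satisfies $\gamma(\{a,b\})=[\delta(a),\gamma(b)]$ on all of $S(\mathfrak{g})$; this is routine but not automatic.
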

\begin{proof}
By the defining relation of $B^e$, it is easy to check that there is a Lie algebra map $V\rtimes \mathfrak{g}\rightarrow B^e$ sending $(x, y)$ to $x+h_y$. This map induces an algebra map $\Phi: U(V\rtimes \mathfrak{g})\rightarrow B^e$. To see that $\Phi$ is a Hopf algebra map, one just have to notice that $x+h_y$ is a primitive element by Theorem \ref{Hopfstr}. Now it follows from Proposition \ref{PBW} or \cite[Theorem 7.3]{OPS} that $\Phi$ is an isomorphism.
\end{proof}

\subsection{Some general properties}

As one can see from Theorem \ref{Hopfstr}, the injective map $m$ preserves the Hopf algebra structure. Hence, if $B$ is a Poisson Hopf algebra, we can identify $B$ as a Hopf subalgebra of $B^e$.

\begin{proposition}\label{normalsub}
Let $B$ be a Poisson Hopf algebra. Then $B$ is a normal Hopf subalgebra of $B^e$.
\end{proposition}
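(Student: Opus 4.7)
Recall that a Hopf subalgebra $K\subseteq H$ is normal precisely when $K$ is stable under both adjoint actions $\mathrm{ad}_\ell(h)(k):=h_1 k S(h_2)$ and $\mathrm{ad}_r(h)(k):=S(h_1) k h_2$. A direct check using $\Delta(hh')=h_1h'_1\otimes h_2 h'_2$ and $S(h_2h'_2)=S(h'_2)S(h_2)$ shows $\mathrm{ad}_\ell(hh')=\mathrm{ad}_\ell(h)\circ\mathrm{ad}_\ell(h')$ (and similarly on the right); since both actions are also $k$-linear in $h$, it suffices to verify stability of $B$ on a set of algebra generators of $B^e$. By the presentation of $B^e$ given in Section \ref{Puniversal}, these generators are $\{m_b:b\in B\}\cup\{h_x:x\in B\}$. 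For $m_{b'}=b'\in B$ the required stability is immediate since $B$ is a Hopf subalgebra of $B^e$ closed under multiplication. Thus the whole argument reduces to showing $(h_x)_1\, b\, S((h_x)_2)\in B$ and $S((h_x)_1)\, b\, (h_x)_2\in B$ for all $x,b\in B$.

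By Theorem \ref{Hopfstr}, $\Delta(h_x)=x_1\otimes h_{x_2}+h_{x_1}\otimes x_2$ and $S(h_x)=h_{S(x)}$, so
\[
(h_x)_1\, b\, S((h_x)_2)=x_1 b\, h_{S(x_2)}+h_{x_1} b\, S(x_2).
\]
Using the commutation rule $h_y b=b h_y+\{y,b\}$ (the defining relation $m_{\{y,b\}}=[h_y,m_b]$) together with commutativity of $B$, this rearranges to
\[
b\bigl(x_1 h_{S(x_2)}+h_{x_1} S(x_2)\bigr)+\{x_1,b\}\, S(x_2),
\]
and it remains to argue that the bracketed term already lies in $B$. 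For this I would apply the Leibniz relation \eqref{hxy}, $h_{ab}=a h_b+b h_a$, with $a=x_1$ and $b=S(x_2)$: since $x_1 S(x_2)=\epsilon(x)\cdot 1$ and $h_1=0$ (from Leibniz with $a=b=1$), we get $x_1 h_{S(x_2)}=-S(x_2) h_{x_1}$. Combining this with $h_{x_1}S(x_2)=S(x_2)h_{x_1}+\{x_1,S(x_2)\}$ collapses the bracket to $\{x_1,S(x_2)\}\in B$. Hence $\mathrm{ad}_\ell(h_x)(b)=b\{x_1,S(x_2)\}+\{x_1,b\}S(x_2)\in B$. The right-adjoint computation is entirely symmetric and gives $b\{S(x_1),x_2\}+\{S(x_1),b\}x_2\in B$.

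I do not anticipate a substantive obstacle: the argument is a careful unwinding of the defining relations of $B^e$ from Section \ref{Puniversal}, the Hopf structure formulas of Theorem \ref{Hopfstr}, and the antipode axiom in $B$. The only point that requires attention is the reduction to generators at the outset, which is justified precisely because the adjoint action is $k$-linear and multiplicative in its first argument; once this is in hand, the identity $h_{x_1 S(x_2)}=0$ is the one computational engine driving everything.
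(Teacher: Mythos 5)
Your proof is correct and follows essentially the same route as the paper: reduce to the adjoint actions of the generators $h_x$ on $B$ (the paper does this via its spanning set $xh_{y_1}\cdots h_{y_n}$, you via multiplicativity of $\mathrm{ad}$), then push $b$ past $h$ using $[h_y,m_b]=m_{\{y,b\}}$ and commutativity. The only cosmetic difference is at the last step, where the paper invokes the antipode axiom in $B^e$ to get $x_1h_{S(x_2)}+h_{x_1}S(x_2)=\e^e(h_x)=0$, while you rederive the needed membership in $B$ from the relation $h_{ab}=ah_b+bh_a$ together with $h_1=0$, obtaining $\{x_1,S(x_2)\}$ (which is in fact $0$); both are valid.
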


\begin{proof}
For the definition of normal Hopf subalgebra, see \cite[Definition 3.4.1]{Mo}. 
By Theorem \ref{Hopfstr}, one gets
$$\Delta(h_y)= y_1\otimes h_{y_2}+ h_{y_1}\otimes y_2
$$
for any $y\in B$.
By Remark \ref{generalcase}, every element in $B^e$ is a linear combination of elements of the form 
$xh_{y_1}\cdots h_{y_n}$
for some $x, y_j\in B$.
Therefore, it suffices to show that 
$$(ad_l h_y)(x)\in B \quad\text{and} \quad(ad_r h_y)(x)\in B,$$
for any $x, y\in B$, where $ad_l$ and $ad_r$ are defined in \cite[definition 3.4.1]{Mo}.
Now,
\begin{align*}
(ad_l h_y)(x)&= y_1xh_{Sy_2}+ h_{y_1}xSy_2\\
&=xy_1h_{Sy_2}+xh_{y_1}Sy_2+\{y_1, x\}Sy_2\\
&=x\e(h_y)+\{y_1, x\}Sy_2\\
&=\{y_1, x\}Sy_2\in B
\end{align*}
Similarly, one can show that
\begin{equation*}
(ad_r h_y)(x)=\{Sy_1, x\}y_2\in B.
\end{equation*}
This completes the proof.
\end{proof}

\begin{corollary}
Let $B$ be a Poisson Hopf algebra. Then $B^+B^e=B^eB^+$ and $I=B^eB^+$ is a Hopf ideal of $B^e$.
\end{corollary}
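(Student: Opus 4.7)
The plan is to deduce this corollary directly from Proposition \ref{normalsub} via the standard Hopf-algebraic fact that, for a normal Hopf subalgebra $K$ of a Hopf algebra $H$, one has $K^+ H = H K^+$ and this common two-sided ideal is a Hopf ideal (see \cite[Lemma 3.4.2]{Mo}). Since Proposition \ref{normalsub} establishes that $B$ is a normal Hopf subalgebra of $B^e$, both conclusions will follow once we spell out the three verifications.

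First, I would verify the equality $B^+ B^e = B^e B^+$. Given $b \in B^+$ and $x \in B^e$, apply the identity $bx = (\mathrm{ad}_\ell\, b_1)(x)\, b_2$, which unwinds to $bx = b_1 x S(b_2) b_3$; since $B$ is normal in $B^e$, $(\mathrm{ad}_\ell\, b_1)(x) \in B^e$ and $b_2 \in B$. Applying $\epsilon$ to one factor of $\Delta(b)$ to separate the contribution of $B^+$ then forces $bx \in B^e B^+$. The reverse inclusion is symmetric using $\mathrm{ad}_r$. This shows $I := B^e B^+ = B^+ B^e$ is a two-sided ideal.

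Next I would check that $I$ is a coideal, i.e.\ $\Delta(I) \subseteq I \otimes B^e + B^e \otimes I$ and $\epsilon(I) = 0$. Because $B$ is a Hopf subalgebra of $B^e$, $\Delta(B^+) \subseteq B^+ \otimes B + B \otimes B^+$, and clearly $\epsilon(B^+) = 0$. Combined with the fact that $\Delta$ is an algebra map, any element of the form $xb$ with $x \in B^e$, $b \in B^+$ has comultiplication lying in $B^e \otimes B^e B^+ + B^e B^+ \otimes B^e = B^e \otimes I + I \otimes B^e$, and $\epsilon(xb) = \epsilon(x)\epsilon(b) = 0$.

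Finally, $S(I) \subseteq I$: for $b \in B^+$ and $x \in B^e$, $S(xb) = S(b)S(x)$, and since $B$ is a Hopf subalgebra, $S(b) \in B^+$; therefore $S(xb) \in B^+ B^e = B^e B^+ = I$, using the equality established in the first step. Combining the ideal, coideal, and antipode-stability properties yields that $I$ is a Hopf ideal. No step presents a genuine obstacle here — the only mildly delicate point is that the conversion between $B^+ B^e$ and $B^e B^+$ requires the normality of Proposition \ref{normalsub}, which has already been done.
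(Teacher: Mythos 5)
Your overall route is exactly the paper's: the corollary is deduced from Proposition \ref{normalsub} together with the standard fact \cite[Lemma 3.4.2(1)]{Mo}, so at that level the citation alone already finishes the proof. However, the way you spell out the first verification does not work as written. For $b\in B^+$ and $x\in B^e$ you use $bx=(\mathrm{ad}_\ell\, b_1)(x)\,b_2$; here the adjoint action is that of an element of $B$ on an element of $B^e$, which is not what normality controls (the containment $(\mathrm{ad}_\ell\, b_1)(x)\in B^e$ is vacuous), and when you split $b_2=\epsilon(b_2)1+(b_2-\epsilon(b_2)1)$ the term coming from $\epsilon(b_2)$ is $(\mathrm{ad}_\ell\, b)(x)=b_1xS(b_2)$, which is not visibly in $B^eB^+$; so this separation does not force $bx\in B^eB^+$. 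The correct use of Proposition \ref{normalsub} has the opposite orientation: $bx=x_1\,(\mathrm{ad}_r\, x_2)(b)$, where normality gives $(\mathrm{ad}_r\, x_2)(b)\in B$ and $\epsilon\bigl((\mathrm{ad}_r\, x_2)(b)\bigr)=\epsilon(x_2)\epsilon(b)=0$, so this factor lies in $B^+$ and $bx\in B^eB^+$; symmetrically $xb=(\mathrm{ad}_\ell\, x_1)(b)\,x_2\in B^+B^e$, which yields $B^+B^e=B^eB^+$. Your coideal and antipode checks are fine (the antipode step correctly uses the equality just established), so with this one correction your argument coincides with the cited lemma and hence with the paper's proof.
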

\begin{proof}
This follows from Proposition \ref{normalsub} and \cite[Lemma 3.4.2(1)]{Mo}.
\end{proof}
For the sake of simplicity,  we put 
$$\mathcal{H}(B):=B^e/B^eB^+$$ in the rest of the paper.

\begin{proposition}\label{coradicalBE}
Let $B$ be a Poisson Hopf algebra. Then the coradical of $B^e$ is equal to the coradical of $B$.
\end{proposition}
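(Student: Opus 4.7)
The plan is to use the Rinehart filtration $\{F_n\}_{n \ge 0}$ on $B^e \cong V(B, \Omega_B)$ introduced just before Corollary \ref{PBW}, where $F_0 = B$ and, by Remark \ref{generalcase}, $F_n$ is the $B$-submodule spanned by all products $h_{y_1} h_{y_2} \cdots h_{y_s}$ with $y_i \in B$ and $s \le n$. The key observation will be that this algebra filtration is simultaneously a coalgebra filtration, after which a standard coradical result finishes the argument.

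First I would verify that $\{F_n\}$ is a coalgebra filtration of $B^e$, i.e.\ $\Delta(F_n) \subseteq \sum_{i+j=n} F_i \otimes F_j$. By Theorem \ref{Hopfstr} one has $\Delta(b) = b_1 \otimes b_2 \in F_0 \otimes F_0$ for $b \in B$, and $\Delta(h_y) = y_1 \otimes h_{y_2} + h_{y_1} \otimes y_2 \in F_0 \otimes F_1 + F_1 \otimes F_0$. Since $\Delta$ is an algebra homomorphism and $F_i \cdot F_j \subseteq F_{i+j}$, an easy induction on $s$ shows that the $\Delta$-image of any monomial $b\, h_{y_1} \cdots h_{y_s} \in F_s$ lies in $\sum_{i+j \le s} F_i \otimes F_j$.

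Next I would invoke the standard fact that for any exhaustive coalgebra filtration $\{F_n\}$ of a coalgebra $C$, the coradical satisfies $C_0 \subseteq F_0$ (see, e.g., \cite[Lemma 5.3.4]{Mo}). Applied to $B^e$ this gives $(B^e)_0 \subseteq F_0 = B$. Since $B \hookrightarrow B^e$ is a subcoalgebra inclusion (by Theorem \ref{Hopfstr}), every simple subcoalgebra of $B$ is a simple subcoalgebra of $B^e$, giving $B_0 \subseteq (B^e)_0$; conversely, the simple subcoalgebras whose sum forms $(B^e)_0$ all lie in $B$ by the previous step and are automatically simple as subcoalgebras of $B$, so $(B^e)_0 \subseteq B_0$. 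Combining both inclusions yields $(B^e)_0 = B_0$.

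The only genuine computation is the coalgebra-filtration check in the first step, and there every $h$-factor in a monomial contributes at most one unit of filtration degree to each tensor factor, so the bookkeeping is immediate; the rest of the proof is a direct application of well-known Hopf-algebraic facts.
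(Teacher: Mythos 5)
Your proof is correct, but it follows a genuinely different route from the paper's. You filter $B^e$ by $h$-length with $B$ in degree zero (the Rinehart/PBW-type filtration $\{F_n\}$), so exhaustiveness is immediate from Remark \ref{generalcase}, and the coalgebra-filtration property follows from $\Delta(b)=b_1\otimes b_2$, $\Delta(h_y)=y_1\otimes h_{y_2}+h_{y_1}\otimes y_2$ and multiplicativity of $\Delta$ — the one point you should make explicit is that $F_iF_j\subseteq F_{i+j}$ rests on the relation $h_xb=bh_x+\{x,b\}$, which lets you push $B$-coefficients to the left without increasing $h$-length. The paper instead takes $\mathcal{F}_0$ to be the subalgebra of $B^e$ generated by the coradical $B_0$ and sets $\mathcal{F}_n=\wedge^{n+1}\mathcal{F}_0$; there the coalgebra (indeed Hopf) filtration property is essentially formal, but exhaustiveness requires an induction along the standard filtration $\{B_{[\ell]}\}$ showing $h_b\in\mathcal{F}_{\ell+1}$ for $b\in B_{[\ell]}$. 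Both arguments end by citing \cite[Lemma 5.3.4]{Mo} and then identifying $(B^e)_0$ with $B_0$ via $B_0=B\cap(B^e)_0$, as you do with the simple-subcoalgebra argument. What each buys: your filtration is coarser at level zero ($F_0=B$ rather than $B_{[0]}$) but much cheaper to verify, and it suffices because one only needs $(B^e)_0\subseteq B$; the paper's finer filtration yields the stronger conclusion that the coradical of $B^e$ already lies in the subalgebra generated by $B_0$, and produces a Hopf algebra filtration of $B^e$ of possible independent use, at the cost of the extra induction.
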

\begin{proof}
First we define a filtration $\{\mathcal{F}_n\}_{n\ge 0}$ on $B^e$. Let $\mathcal{F}_0$ be the subalgebra of $B^e$ generated by $B_0$, the coradical of $B$. Since the antipode of $B$ is bijective, $\mathcal{F}_0$ is a Hopf subalgebra of $B^e$. Define $\mathcal{F}_n$ to be 
$$\mathcal{F}_n=\wedge^{n+1}\mathcal{F}_0,$$
where the $\wedge$ is taken in $B^e$. Apparently, $\{\mathcal{F}_n\}_{n\ge 0}$ is an ascending chain of $k$-spaces and $B_{[n]}\subset \mathcal{F}_n$ for any $n$. We are going to prove that $\{\mathcal{F}_n\}_{n\ge 0}$ is a Hopf filtration on $B^e$. Similar to the proof of \cite[Theorem 5.2.2 (2)]{Mo}, one has
$$\Delta{\mathcal{F}_n}\subset \sum_{i=0}^n\mathcal{F}_i\otimes \mathcal{F}_{n-i}.$$
Moreover, it is easy to show by induction that
\begin{equation}\label{algebrafil}
\mathcal{F}_n\mathcal{F}_m\subset \mathcal{F}_{n+m}.
\end{equation}
Now it only remains to show that $\{\mathcal{F}_n\}_{n\ge 0}$ is exhaustive, i.e. $\bigcup_{i=0}^{\infty}\mathcal{F}_i=B^e$.
By the construction, $B^e$ is generated by elements of the form $a$ and $h_b$, where $a, b\in B$, as an algebra. Hence by (\ref{algebrafil}), it suffices to show that for any $b\in B$, $h_b\in \mathcal{F}_N$ for some $N\ge 0$. Suppose that $b\in B_{[\ell]}\subset \mathcal{F}_{\ell}$. We proceed by induction on $\ell$ and show that $N=\ell+1$.

If $\ell =0$, then $\Delta(b)=b_1\otimes b_2\in B_{[0]}\otimes B_{[0]}$. Now
$$\Delta(h_b)=h_{b_1}\otimes b_2+b_1\otimes h_{b_2}\in B^e\otimes \mathcal{F}_0+\mathcal{F}_0\otimes B^e.$$
Hence $h_b\in \Delta^{-1}(B^e\otimes\mathcal{F}_0+\mathcal{F}_0\otimes B^e)=\mathcal{F}_0\wedge\mathcal{F}_0=\mathcal{F}_1$.

Suppose $\ell \ge 1$. Then $\Delta(b)=b_1\otimes b_2\in \sum_{i=0}^\ell B_{[i]}\otimes B_{[\ell -i]}$. 
Clearly, $b_1\otimes b_2\in B\otimes B_{[\ell]}$, so $h_{b_1}\otimes b_2\in B^e\otimes B_{[\ell]}\subset B^e\otimes \mathcal{F}_{\ell}$. Also, $b_1\otimes b_2\in B_{[0]}\otimes B+\sum_{i=1}^\ell B_{[i]}\otimes B_{[\ell -i]}$. Hence by induction hypothesis, 
$b_1\otimes h_{b_2}\in B_{[0]}\otimes B^e+B^e\otimes \mathcal{F}_{\ell}$.
Consequently,
$$\Delta(h_b)=h_{b_1}\otimes b_2+b_1\otimes h_{b_2}\in B^e\otimes \mathcal{F}_{\ell}+\mathcal{F}_0\otimes B^e.$$
Now it is clear that $h_b\in \Delta^{-1}(B^e\otimes\mathcal{F}_{\ell}+\mathcal{F}_0\otimes B^e)=\mathcal{F}_{\ell+1}$.

Since $\{\mathcal{F}_n\}_{n\ge 0}$ is a Hopf algebra filtration (in particular, a coalgebra filtration) on $B^e$, the coradical of $B^e$ is contained in $\mathcal{F}_0=B_{[0]}\subset B$ by \cite[Lemma 5.3.4]{Mo}. The result then follows.
\end{proof}

This proposition has some nice consequences. For a right comodule $M$ over a coalgebra $C$, the {\it coinvariants} of $C$ in $M$ are the set $M^{coC}:= \{m\in M: \rho(m)=m\otimes 1\}$, where $\rho$ is the comodule structure map. Similarly, for a left $C$-comodule $M$, one can define $\,\,^{coC}M$.

\begin{corollary}\label{freeness}
Let $B$ be a Poisson Hopf algebra. Then the following statements are true. 
\begin{enumerate}

\item[\textup{(1)}] $B^e$ is a free module over $B$,
\item[\textup{(2)}] $B=(B^e)^{co\mathcal{H}(B)} =\,\,^{co\mathcal{H}(B)}(B^e)$,
\item[\textup{(3)}] $B^e$ is an injective comodule over $\mathcal{H}(B)$.
\end{enumerate}
\end{corollary}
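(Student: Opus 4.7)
The plan is to derive all three statements from Proposition~\ref{coradicalBE} (the coradical of $B^e$ lies in $B$) together with Proposition~\ref{normalsub} ($B$ is a normal Hopf subalgebra of $B^e$), via standard results on Hopf algebra extensions. First I would establish part (1), and then deduce (2) and (3) from the resulting faithful flatness combined with normality.

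For part (1), the key input is a general freeness theorem (due to Radford in the pointed case and Masuoka in general): if $K$ is a Hopf subalgebra of a Hopf algebra $H$ containing the coradical $H_0$, then $H$ is free as a left and right $K$-module. Proposition~\ref{coradicalBE} gives $(B^e)_0 = B_0 \subseteq B$, so this applies directly to the inclusion $B \subseteq B^e$. A more self-contained alternative would be to exploit the Hopf algebra filtration $\{\mathcal{F}_n\}_{n \ge 0}$ on $B^e$ built in the proof of Proposition~\ref{coradicalBE}: since $\mathcal{F}_0$ is generated by $B_0 \subseteq B$, one could attempt to lift a homogeneous basis of $\gr B^e$ over $\gr B$ to a $B$-basis of $B^e$, compatibly with the filtration.

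Once (1) is in hand, $B^e$ is in particular faithfully flat over $B$. Combined with normality (Proposition~\ref{normalsub}), the standard machinery of faithfully flat normal Hopf algebra extensions (see \cite[Chapter 3]{Mo} and the references therein) yields (2) and (3) formally: the normal Hopf subalgebra $B$ coincides with both the left and right coinvariants of $B^e$ under the $\mathcal{H}(B)$-coactions induced by the projection $B^e \twoheadrightarrow \mathcal{H}(B)$, and $B^e$ is faithfully coflat, hence injective, as a $\mathcal{H}(B)$-comodule on both sides.

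The main obstacle is securing the precise form of the freeness theorem needed in (1), since the general (non-pointed) version requires some care about hypotheses on the antipode; fortunately, in our setting $B^e$ has bijective antipode and $B$ is a normal commutative Hopf subalgebra, which fits the standing assumptions of Masuoka's theorem. Once freeness is in place, parts (2) and (3) are essentially formal consequences of the Hopf-theoretic duality between faithful flatness of the extension and cofreeness over the quotient.
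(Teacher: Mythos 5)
Your proposal is correct and follows essentially the same route as the paper: part (1) is obtained from Proposition \ref{coradicalBE} together with Radford's freeness theorem for Hopf subalgebras containing the coradical (the paper cites \cite[Corollary 2.3]{Ra1}), and parts (2) and (3) are exactly the standard faithfully-flat normal extension machinery, which the paper invokes via \cite[Theorem 1.4]{Schn}.
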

\begin{proof}
Since the coradical of $B^e$ is equal to that of $B$, part (1) follows from \cite[Corollary 2.3]{Ra1}. Then part (2) and (3) follow from \cite[Theorem 1.4]{Schn}.
\end{proof}

The algebra $B^e$ is clearly an $\mathcal{H}(B)$-comodule algebra in the sense of \cite[Definition 4.1.2]{Mo}. In terms of \cite[Definition 7.2.1]{Mo}, Corollary \ref{freeness} (b) just means that $B\subset B^e$ is an $\mathcal{H}(B)$-extension.

\begin{corollary}\label{BEpointed}
Let $B$ be a pointed Poisson Hopf algebra. Then $B^e$ is a pointed Hopf algebra and $G(B)=G(B^e)$.
\end{corollary}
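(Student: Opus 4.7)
The plan is to derive the corollary almost immediately from Proposition \ref{coradicalBE} together with the standard fact that group-like elements always lie in the coradical. Since $B$ is pointed by hypothesis, we have $B_0 = kG(B)$, and Proposition \ref{coradicalBE} gives $(B^e)_0 = B_0 = kG(B)$.

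Next I would set up the two inclusions that pin down $G(B^e)$. On one side, because $m \colon B \hookrightarrow B^e$ is an injective Hopf algebra map (Theorem \ref{Hopfstr}), every group-like element of $B$ remains group-like in $B^e$, so $G(B) \subseteq G(B^e)$, hence $kG(B) \subseteq kG(B^e)$. On the other side, $kG(B^e)$ is the sum of the one-dimensional simple subcoalgebras of $B^e$, so by definition $kG(B^e) \subseteq (B^e)_0$.

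Combining these inclusions with the identification $(B^e)_0 = kG(B)$, we obtain
\[
kG(B^e) \subseteq (B^e)_0 = kG(B) \subseteq kG(B^e),
\]
which forces $kG(B^e) = kG(B)$. This simultaneously gives $G(B) = G(B^e)$ and shows $(B^e)_0 = kG(B^e)$, i.e.\ $B^e$ is pointed. There is essentially no obstacle here beyond having Proposition \ref{coradicalBE} in hand; the argument is a direct bookkeeping of coradicals versus group-like elements, so the real work has already been done in establishing that the coradical does not grow when passing from $B$ to $B^e$.
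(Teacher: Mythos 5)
Your proof is correct and follows exactly the route the paper intends: the corollary is stated as an immediate consequence of Proposition \ref{coradicalBE}, and your bookkeeping of $(B^e)_0 = B_0 = kG(B)$ together with the inclusions $kG(B^e) \subseteq (B^e)_0$ and $G(B) \subseteq G(B^e)$ is precisely the implicit argument (with the last step $kG(B^e)=kG(B) \Rightarrow G(B^e)=G(B)$ using linear independence of distinct group-like elements).
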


\subsection{The structure of $\mathcal{H}(B)$}

The purpose of this section is to explore the structure of $\mathcal{H}(B)$ where $B$ is a Poisson Hopf algebra. To start with, we have the following proposition.
\begin{proposition}\label{primitivegen}
Let $B$ be a Poisson Hopf algebra. Then $\mathcal{H}(B)$ is a connected Hopf algebra and is generated by primitive elements as an algebra.
\end{proposition}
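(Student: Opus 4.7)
The plan is to prove the two assertions in order, leveraging the Hopf filtration $\{\mathcal{F}_n\}_{n\ge 0}$ constructed in the proof of Proposition~\ref{coradicalBE} together with Remark~\ref{generalcase}. Throughout, let $\pi\colon B^e\to \mathcal{H}(B)$ denote the canonical projection.

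First, for the connectedness, I would observe that $\mathcal{F}_0$, being the subalgebra of $B^e$ generated by $B_0\subseteq B$, is actually contained in $B$. Since the counit of $B^e$ restricted to $B$ agrees with the counit of $B$ (by Theorem~\ref{Hopfstr}), every element $a\in B$ satisfies $a-\e(a)\cdot 1\in B^+$, so $\pi(B)=k\cdot 1$ and in particular $\pi(\mathcal{F}_0)=k$. The filtration $\{\pi(\mathcal{F}_n)\}_{n\ge 0}$ is then an exhaustive coalgebra filtration on $\mathcal{H}(B)$ with $0$-th term $k$, and \cite[Lemma~5.3.4]{Mo} forces the coradical of $\mathcal{H}(B)$ to be contained in $k$. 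Hence $\mathcal{H}(B)$ is connected.

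Next, for any $a\in B$, write $\bar h_a:=\pi(h_a)$. Using the comultiplication formula of Theorem~\ref{Hopfstr},
\begin{equation*}
\Delta(\bar h_a)=\pi(h_{a_1})\otimes \pi(m_{a_2})+\pi(m_{a_1})\otimes \pi(h_{a_2})=\bar h_{a_1}\otimes \e(a_2)+\e(a_1)\otimes \bar h_{a_2}=\bar h_a\otimes 1+1\otimes \bar h_a,
\end{equation*}
so every $\bar h_a$ is primitive in $\mathcal{H}(B)$. To see that these primitive elements generate $\mathcal{H}(B)$ as an algebra, I would invoke Remark~\ref{generalcase}: every element of $B^e$ is a $B$-linear combination of monomials $h_{a_1}\cdots h_{a_s}$ with $a_i\in B$. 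Applying $\pi$ and using $\pi(B)=k$ shows that $\mathcal{H}(B)$ is spanned, as a $k$-algebra, by products $\bar h_{a_1}\cdots \bar h_{a_s}$, which completes the argument.

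I do not anticipate a serious obstacle: the only subtle point is verifying that $\pi(\mathcal{F}_0)=k$, which reduces to checking $\mathcal{F}_0\subseteq B$; this is immediate from the definition of $\mathcal{F}_0$ as the subalgebra generated by $B_0\subseteq B$. The rest is a direct combination of the primitive comultiplication identity for $\bar h_a$ with the generating-set statement of Remark~\ref{generalcase}.
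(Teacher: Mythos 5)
Your proof is correct, and the second half (primitivity of the $\bar h_a$ and generation) coincides with the paper's argument: the paper likewise notes $\pi(a)=\e(a)$ for $a\in B$, that $B^e$ is generated by the $a$'s and $h_b$'s, and performs the same computation $\Delta(\theta(b))=1\otimes\theta(b)+\theta(b)\otimes 1$ with $\theta=\pi\circ h$. Where you diverge is the connectedness step: the paper sets $V=(B^e_0)^+$, uses the \emph{statement} of Proposition~\ref{coradicalBE} to get $V=B_0^+\subset B^eB^+$, factors $\pi$ through the quotient coalgebra $B^e/V$, and then cites \cite[Lemma~5.3.8]{Mo} (such a quotient is connected) together with \cite[Corollary~5.3.5]{Mo}. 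You instead reuse the Hopf filtration $\{\mathcal{F}_n\}$ from the \emph{proof} of Proposition~\ref{coradicalBE}, push it forward along the coalgebra map $\pi$ (noting $\mathcal{F}_0\subseteq B$ and $\pi(B)=k1$ since $B^+\subseteq B^eB^+$), and conclude via \cite[Lemma~5.3.4]{Mo} that the coradical of $\mathcal{H}(B)$ sits inside $k1$. Both routes are standard coradical-filtration arguments and are equally short; yours is slightly more self-contained in that it avoids the auxiliary quotient $B^e/V$ and the two lemmas 5.3.8/5.3.5, at the cost of invoking the internal filtration of an earlier proof rather than only the stated result. Your requirement that the generation step pass through Remark~\ref{generalcase} is a harmless strengthening: the weaker fact that $B^e$ is generated as an algebra by the $a$'s and $h_b$'s (which the paper uses) already suffices, since $\pi$ is an algebra map killing $B^+$.
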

\begin{proof}
Let $V=(B^e_0)^+$. By Propostion \ref{coradicalBE}, $V=B_0^+$ and therefore $V\subset B^+B^e$. Consequently there is a surjective coalgebra map $B^e/V\rightarrow \mathcal{H}(B)$. By \cite[Lemma 5.3.8]{Mo}, $B^e/V$ is a connected coalgebra. Hence $\mathcal{H}(B)$ is a connected coalgebra by \cite[Corollary 5.3.5]{Mo}.

Let $\pi$ be the projection from $B^e$ to $\mathcal{H}(B)$ and let $\theta$ be the composition $\pi\circ h$. By the construction of $\mathcal{H}(B)$, $\pi(a)=\e(a)$ for any $a\in B$. Since $B^e$ is generated by elements of the form $a$ and $h_b$, where $a, b\in B$, as an algebra, $\mathcal{H}(B)$ is generated by elements of the form $\theta(b)$ where $b\in B$. Now
\begin{align*}
\Delta(\theta(b))&=(\pi\otimes \pi)\Delta(h_b)\\
&=\pi(b_1)\otimes \theta(b_2)+\theta(b_1)\otimes \pi(b_2)\\
&=\e(b_1)\otimes \theta(b_2)+\theta(b_1)\otimes \e(b_2)\\
&=1\otimes \theta(b)+\theta(b)\otimes 1.
\end{align*}
This completes the proof.
\end{proof}

For any Poisson Hopf algebra $B$, the space $B^+/(B^+)^2$ is indeed a Lie algebra with Lie bracket induced by the Poisson structure on $B$.

\begin{proposition}\label{Lie}
 Suppose that the base field $k$ has characteristic $0$. Let $B$ be a finitely generated Poisson Hopf algebra and let $\mathfrak{m}=B^+$. Then $\mathcal{H}(B)\cong U(\mathfrak{a})$ as Hopf algebras, where $\mathfrak{a}$ is the Lie algebra $\mathfrak{m}/\mathfrak{m}^2$.
\end{proposition}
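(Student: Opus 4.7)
The plan is to construct a Hopf algebra morphism $\Phi\colon U(\mathfrak{a}) \to \mathcal{H}(B)$ and prove that it is bijective. Define $\theta\colon B \to \mathcal{H}(B)$ by $\theta(b) := \pi(h_b)$, where $\pi\colon B^e \to \mathcal{H}(B)$ is the canonical projection. The relation $h_{ab} = ah_b + bh_a$ in $B^e$ gives $\pi(h_{ab}) = \e(a)\theta(b) + \e(b)\theta(a)$, so $\theta$ vanishes on $\mathfrak{m}^2$; combined with $h_1 = 0$, this shows that $\theta$ descends to a linear map $\bar\theta\colon \mathfrak{a} \to \mathcal{H}(B)$. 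The proof of Proposition \ref{primitivegen} already shows that $\bar\theta$ lands in the space of primitive elements, and the relation $h_{\{a,b\}} = h_a h_b - h_b h_a$ (together with the check $\{\mathfrak{m}, \mathfrak{m}^2\} \subset \mathfrak{m}^2$, which makes $\mathfrak{a}$ into a Lie algebra under the induced Poisson bracket) shows that $\bar\theta$ is a Lie-algebra map. The universal property of $U(\mathfrak{a})$ then yields an algebra map $\Phi\colon U(\mathfrak{a}) \to \mathcal{H}(B)$, which is automatically a Hopf algebra map because $\Phi(\mathfrak{a})$ consists of primitives. Surjectivity is immediate from Proposition \ref{primitivegen}: $\mathcal{H}(B)$ is generated as an algebra by $\theta(B) = \bar\theta(\mathfrak{a})$, which lies in the image of $\Phi$.

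The crux is injectivity. The idea is to realise $U(\mathfrak{a})$ itself as an $\mathcal{H}(B)$-module on which $\Phi$ acts by left multiplication. Put $M := U(\mathfrak{a})$ as a $k$-vector space, with $B$-module structure $b \cdot u = \e(b) u$ and Poisson action $\{b, u\}_M := \bar b \cdot u$, where $\bar b$ denotes the image of $b - \e(b)$ in $\mathfrak{a}$ and the right-hand side is left multiplication inside $U(\mathfrak{a})$. A direct check of the three axioms in Definition \ref{Pmodules} shows that $M$ is a Poisson $B$-module; axiom (1) uses precisely that the Poisson bracket on $B$ descends to the Lie bracket on $\mathfrak{a}$. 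The associated $B^e$-action sends $h_b$ to $L_{\bar b}$, left multiplication by $\bar b$. Because $B^+$ acts as zero on $M$, this action descends to an algebra map $\rho\colon \mathcal{H}(B) \to \operatorname{End}_k(U(\mathfrak{a}))$. Tracing generators, $\rho \circ \Phi$ sends $\bar a \in \mathfrak{a} \subset U(\mathfrak{a})$ to $L_{\bar a}$, so by algebra-map extension it sends every $x \in U(\mathfrak{a})$ to $L_x$. Since $L_x(1) = x$, the composite $\rho \circ \Phi$ is injective; therefore so is $\Phi$.

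The main obstacle is only the bookkeeping: carefully verifying the three Poisson-module axioms for $M$ and checking that $\mathfrak{m}^2$ is a Lie ideal of $\mathfrak{m}$ under the Poisson bracket (a quick consequence of $\e\{a,b\} = 0$ combined with the Leibniz rule). A pleasant feature of this approach is that the self-action of $U(\mathfrak{a})$ simultaneously provides the witness for injectivity and the candidate inverse, so the argument bypasses any direct appeal to the Cartier--Milnor--Moore theorem; the characteristic-zero hypothesis enters only through the well-known injectivity of $\mathfrak{a} \hookrightarrow U(\mathfrak{a})$ used in the very last line.
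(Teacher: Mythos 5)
Your proof is correct, but it is a genuinely different argument from the one in the paper. The paper's proof is basis-theoretic: it invokes Cartier's theorem (char $0$ plus finite generation) to get smoothness, hence freeness of $\Omega_B$ of rank $\dim_k\mathfrak{m}/\mathfrak{m}^2$; then the Poincar\'e--Birkhoff--Witt theorem of Corollary \ref{PBW} gives a monomial $B$-basis $h_{x_1}^{i_1}\cdots h_{x_\ell}^{i_\ell}$ of $B^e$, hence a monomial $k$-basis $\theta(x_1)^{i_1}\cdots\theta(x_\ell)^{i_\ell}$ of $\mathcal{H}(B)$; finally it uses the characteristic-zero fact that the connected, primitively generated Hopf algebra $\mathcal{H}(B)$ is $U(L)$ for $L$ its primitive space, and identifies $L$ with $\mathfrak{m}/\mathfrak{m}^2$ by a dimension count. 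You instead build the map $\Phi\colon U(\mathfrak{a})\to\mathcal{H}(B)$ directly from the universal property (the descent of $\theta$ through $k1+\mathfrak{m}^2$ and the Lie-map property are exactly right), get surjectivity from Proposition \ref{primitivegen}, and prove injectivity by equipping $U(\mathfrak{a})$ with a Poisson $B$-module structure (trivial action via $\e$, Hamiltonian part acting by left multiplication through $\mathfrak{a}$); the induced $\mathcal{H}(B)$-action composed with $\Phi$ is then the faithful left regular representation of $U(\mathfrak{a})$. The module axioms of Definition \ref{Pmodules}, equivalently the defining relations (\ref{5.1})--(\ref{5.5}), do check out, and the action of $B^eB^+$ is visibly zero, so the descent to $\mathcal{H}(B)$ is legitimate. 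What your route buys: it bypasses smoothness, the Rinehart PBW theorem, and any Milnor--Moore-type structure theorem, and in fact it nowhere uses characteristic $0$ or finite generation --- your closing remark that char $0$ enters via the injectivity of $\mathfrak{a}\hookrightarrow U(\mathfrak{a})$ is not even needed, since your last step only uses faithfulness of the left regular representation ($L_x(1)=x$), valid in all characteristics; so you have actually proved a stronger statement. What the paper's route buys: the explicit monomial basis of $\mathcal{H}(B)$ and the identifications $\ker\theta=k1+\mathfrak{m}^2$ and $L\cong\mathfrak{m}/\mathfrak{m}^2$, which are quoted later (Lemma \ref{cobracket} and the Lie bialgebra discussion); these also follow from your isomorphism, but in char $0$ one needs the extra observation that the primitives of $U(\mathfrak{a})$ are exactly $\mathfrak{a}$.
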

\begin{proof}
Let $\theta$ be the composition $\pi\circ h$, where $\pi: B^e\rightarrow \mathcal{H}(B)$ is the canonical projection. Let $G$ be the algebraic group with $\mathcal{O}(G)=B$. By a well-known result of Cartier, $G$ is smooth. Hence $\Omega_B$ is a free module over $B$ of rank equal to $\ell=\dim_k \mathfrak{m}/\mathfrak{m}^2$. Let $\{x_1, x_2, ..., x_\ell\}$ be a set of elements in $\mathfrak{m}$ whose image in $\mathfrak{m}/\mathfrak{m}^2$ form a $k$-basis. By Corollary \ref{PBW}, $B^e$ is a free $B$-module with a $B$-basis consisting of elements of the form
\begin{equation}\label{Bbasis}
h_{x_1}^{i_1}\cdots h_{x_\ell}^{i_\ell}.
\end{equation}
Let $W$ be the $k$-space of $B^e$ spanned by elements in $(\ref{Bbasis})$. Then $B^+B^e=B^+W$. Consequently, $\pi$ restricts to a $k$-space isomorphism from $W$ to $\mathcal{H}(B)$. In particular, $\mathcal{H}(B)$ has a $k$-basis consisting of elements of the form 
\begin{equation}\label{kbasis}
\theta(x_1)^{i_1}\cdots \theta(x_\ell)^{i_\ell}.
\end{equation}
Since $\ch k=0$, Proposition \ref{primitivegen} tells that $\mathcal{H}(B)$ is isomorphic to $U(L)$, where $L$ is the primitive space of the connected Hopf algebra $\mathcal{H}(B)$. By the proof of Proposition \ref{primitivegen}, $\{\theta(x_1),...., \theta(x_\ell)\}\subset L$. Moreover, from the $k$-basis $(\ref{kbasis})$ of $\mathcal{H}(B)$, we see that $\{\theta(x_1),...., \theta(x_\ell)\}$ is a $k$-basis for $L$. Therefore, the Lie algebra morphism $\theta$ maps $\mathfrak{m}$ onto $L$. It is easy to check that $k1+\mathfrak{m}^2\subset \ker \theta$. Since $\dim_kL=\dim_k \mathfrak{m}/\mathfrak{m}^2$, $\ker \theta =k 1+ \mathfrak{m}^2$ and $\theta$ induces a Lie algebra isomorphism from $\mathfrak{m}/\mathfrak{m}^2$ to $L$. This completes the proof.
\end{proof}

In fact, the Lie algebra $\mathfrak{m}/\mathfrak{m}^2$ carries more structure - it is actually a so-called Lie bialgebra, whose definition we briefly recall here.

Let $V$ be a vector space. Let $\Sym(V\otimes V)$ be the subspace of $V\otimes V$ spanned by elements of the form $x\otimes y+y\otimes x$ and let $\Asym(V\otimes V)$ be the subspace of $V\otimes V$ spanned by elements of the form $x\otimes y-y\otimes x$. Clearly, we have
$$V\otimes V=\Sym(V\otimes V)\oplus \Asym(V\otimes V).$$

\begin{definition}\label{Liecoalgebra}
A Lie coalgebra $L$ is a vector space with a $k$-linear map $\delta: L\rightarrow \Asym(L\otimes L)$ such that 
\begin{equation}
(1+\xi+\xi^2)\circ(1\otimes \delta)\circ \delta=0,
\end{equation}
where $\xi: L^{\otimes 3}\rightarrow L^{\otimes 3}$ is the $k$-linear map induced by the cyclic permutation $x\otimes y\otimes z\mapsto y\otimes z\otimes x$.
\end{definition}

Obviously, any coalgebra $C$ with comultiplication $\Delta$ becomes a Lie coalgebra via $\delta:= \Delta- \Delta^{op}$.

\begin{definition}\label{Liebialgebra}
A Lie bialgebra $L$ is a Lie algebra with a Lie coalgebra structure given by a cobracket $\delta$ such that 
\begin{equation}\label{Liebialgebra}
\delta([a, b])=a\cdot\delta(b)-b\cdot\delta(a),
\end{equation}
where 
$$a\cdot(b\otimes c)=[a, b]\otimes c+b\otimes [a, c].$$
\end{definition}

\begin{remark}\label{coPoisson}\cite[Proposition 2.2.10]{KS} 
For a Lie bialgebra $L$, the cobracket $\delta$ naturally extends to a map $U(L)\rightarrow U(L)\otimes U(L)$, which makes $U(L)$ a coPoisson Hopf algebra in the sense of \cite[Definition 2.2.9]{KS}.
\end{remark}

The following argument is taken from \cite{EK}, with a slightly different notation.
Now, assume that the base field has characteristic $0$. Let $\mathfrak{a}= \mathfrak{m}/\mathfrak{m}^2$, where $\mathfrak{m}=B^+$. It is clear that the Poisson bracket extends to the completion of $B$ with respect to the $\mathfrak{m}$-adic topology
$$\lim_{\leftarrow}B/\mathfrak{m}^n,$$
which is exactly $k[[\mathfrak{a}]]$. Therefore, $k[[\mathfrak{a}]]$ becomes a topological Poisson Hopf algebra. Then the space of continuous functions on $k[[\mathfrak{a}]]$ is the universal enveloping algebra $U(\mathfrak{a}^*)$ of $\mathfrak{a}^*$. Moreover, $U(\mathfrak{a}^*)$ becomes a coPoisson Hopf algeba with the cobracket $\delta$ given by the dual of the Poisson bracket on $k[[\mathfrak{a}]]$. It is easy to show that $\delta(\mathfrak{a}^*)\subset \mathfrak{a}^*\otimes \mathfrak{a}^*$, and $\delta$ defines a Lie bialgebra structure on $\mathfrak{a}^*$. Consequently, $\mathfrak{a}$ carries a natural Lie bialgebra structure as the dual of $\mathfrak{a}^*$. To summarize, we have
\begin{proposition}
Suppose that the base field $k$ has characteristic $0$. Let $B$ be a finitely generated Poisson Hopf algebra and let $\mathfrak{m}=B^+$. Then $\mathfrak{a}=\mathfrak{m}/\mathfrak{m}^2$ is a Lie bialgebra as described above. Consequently, $\mathcal{H}(B)$ is a coPoisson Hopf algebra.
\end{proposition}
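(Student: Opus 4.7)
The plan has three parts: I would first equip $\mathfrak{a}$ with a Lie bracket, then construct a Lie cobracket, verify the Lie bialgebra compatibility, and finally invoke Remark \ref{coPoisson}.

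First, I would check that the Poisson bracket descends to $\mathfrak{a}=\mathfrak{m}/\mathfrak{m}^2$. Since $\e\{a,b\}=0$ for any $a,b\in B$, we have $\{\mathfrak{m},\mathfrak{m}\}\subseteq \mathfrak{m}$, and the Leibniz rule yields $\{\mathfrak{m}^2,\mathfrak{m}\}\subseteq \mathfrak{m}^2$ via $\{xy,z\}=x\{y,z\}+y\{x,z\}$ for $x,y\in\mathfrak{m}$. Antisymmetry and Jacobi transfer directly, giving a Lie bracket $[-,-]$ on $\mathfrak{a}$ induced by $\{-,-\}$.

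Next, for the cobracket I would carry out the completion-and-duality construction sketched above the statement. Cartier's smoothness theorem (applicable because $\ch k=0$ and $B$ is finitely generated, as already used in Proposition \ref{Lie}) identifies the $\mathfrak{m}$-adic completion $\widehat B=\varprojlim B/\mathfrak{m}^n$ with the formal power series ring $k[[\mathfrak{a}]]$. Continuity of the Poisson bracket in the $\mathfrak{m}$-adic topology (which follows by induction on $p+q$ from Leibniz, giving $\{\mathfrak{m}^p,\mathfrak{m}^q\}\subseteq \mathfrak{m}^{p+q-1}$) lets it extend to $\widehat B$, and $\Delta$ extends to the completed tensor product, making $\widehat B$ a topological Poisson Hopf algebra. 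The continuous dual $\widehat B^\circ$ is the cocommutative Hopf algebra $U(\mathfrak{a}^*)$, and dualizing the Poisson bracket produces a $k$-linear map $\delta_*:U(\mathfrak{a}^*)\to U(\mathfrak{a}^*)\otimes U(\mathfrak{a}^*)$. Because $\{-,-\}$ preserves $\mathfrak{m}$, the dual $\delta_*$ sends the space of primitives $\mathfrak{a}^*\subseteq U(\mathfrak{a}^*)$ into $\mathfrak{a}^*\otimes \mathfrak{a}^*$, and antisymmetry of the bracket forces the image to lie in $\Asym(\mathfrak{a}^*\otimes \mathfrak{a}^*)$. Dualizing once more, which is legitimate because $\mathfrak{a}$ is finite-dimensional (cotangent space at the identity of a finite-type smooth variety), yields the required $\delta:\mathfrak{a}\to \Asym(\mathfrak{a}\otimes\mathfrak{a})$.

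The Lie bialgebra axioms for $(\mathfrak{a},[-,-],\delta)$ are then exactly dual to the Poisson axioms for $\widehat B$: co-Jacobi for $\delta$ corresponds under double duality to Jacobi for $\{-,-\}$, and the cocycle condition \eqref{Liebialgebra} corresponds to the derivation identity $\{a,bc\}=\{a,b\}c+b\{a,c\}$. With $\mathfrak{a}$ established as a Lie bialgebra, the isomorphism $\mathcal{H}(B)\cong U(\mathfrak{a})$ from Proposition \ref{Lie} together with Remark \ref{coPoisson} endows $\mathcal{H}(B)$ with a coPoisson Hopf algebra structure. The main obstacle I anticipate is the careful bookkeeping in the double-dualization: one must check that the dualized Poisson bracket actually restricts to a map $\mathfrak{a}^*\to \mathfrak{a}^*\otimes\mathfrak{a}^*$ rather than landing in a larger summand of $U(\mathfrak{a}^*)^{\otimes 2}$, and that each of the Jacobi, co-Jacobi, and cocycle identities transforms correctly under two applications of duality. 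This is linear-algebraic rather than deep, but it is the only step that requires attention to the filtered/graded pieces of the $\mathfrak{m}$-adic filtration.
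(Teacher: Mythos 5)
Your overall route---complete $B$ at $\mathfrak{m}$, use smoothness (Cartier) to identify the completion with $k[[\mathfrak{a}]]$, pass to the continuous dual $U(\mathfrak{a}^*)$, and transport structure back to the finite-dimensional space $\mathfrak{a}$---is exactly the paper's argument (imported from Etingof--Kazhdan), and your first step, that the Poisson bracket descends to a Lie bracket on $\mathfrak{a}=\mathfrak{m}/\mathfrak{m}^2$, is fine. The genuine gap is in your construction of the cobracket. Dualizing the Poisson bracket does give $\delta_*:U(\mathfrak{a}^*)\to U(\mathfrak{a}^*)\otimes U(\mathfrak{a}^*)$ restricting to a map $\mathfrak{a}^*\to \Asym(\mathfrak{a}^*\otimes\mathfrak{a}^*)$, i.e.\ a Lie \emph{cobracket} on $\mathfrak{a}^*$. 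But ``dualizing once more'' does not yield a cobracket on $\mathfrak{a}$: the transpose of a map $\mathfrak{a}^*\to\mathfrak{a}^*\otimes\mathfrak{a}^*$ is a map $\mathfrak{a}\otimes\mathfrak{a}\to\mathfrak{a}$, so this double dual merely recovers the Lie bracket on $\mathfrak{a}$ you already defined. As written, you never construct $\delta:\mathfrak{a}\to\Asym(\mathfrak{a}\otimes\mathfrak{a})$ at all, and indeed the coalgebra structure of $B$ never enters your construction, although it must: the cobracket on $\mathfrak{a}$ is dual to the Lie \emph{bracket} on $\mathfrak{a}^*$, i.e.\ to the commutator of $U(\mathfrak{a}^*)$, which is governed by the comultiplication of $B$ (concretely it is induced by $\Delta-\Delta^{op}$, as Lemma \ref{cobracket} makes explicit in the connected case).

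Relatedly, your dictionary of axioms is off. The Leibniz identity $\{a,bc\}=\{a,b\}c+b\{a,c\}$ dualizes to the co-Leibniz compatibility of $\delta_*$ with the comultiplication of $U(\mathfrak{a}^*)$; the cocycle condition $\delta([a,b])=a\cdot\delta(b)-b\cdot\delta(a)$ is instead dual to the Poisson--Hopf compatibility $\Delta(\{a,b\})=\{\Delta(a),\Delta(b)\}$, which mixes bracket and coproduct and is precisely the ingredient your construction omits. The correct order of operations, as in the paper: $\mathfrak{a}^*$ is first made a Lie bialgebra (bracket from the algebra structure of $U(\mathfrak{a}^*)$, i.e.\ from $\Delta$ of $B$; cobracket from the dual of the Poisson bracket, with the coPoisson compatibility of $U(\mathfrak{a}^*)$ giving the cocycle condition on primitives), and then $\mathfrak{a}$, being finite-dimensional, carries the dual Lie bialgebra structure, whose bracket agrees with the one induced by $\{\cdot,\cdot\}$. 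Once that is repaired, your final step (Proposition \ref{Lie} together with Remark \ref{coPoisson}) is exactly the paper's conclusion.
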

\begin{proof}
We have already shown that $\mathfrak{a}=\mathfrak{m}/\mathfrak{m}^2$ is a Lie bialgebra. Now it follows from Proposition \ref{Lie} and Remark \ref{coPoisson} that $\mathcal{H}(B)$ is a coPoisson Hopf algebra.
\end{proof}

In fact, if the Poisson Hopf algebra $B$ is connected in the sense that its coradical is one-dimensional, the coPoisson Hopf algebra structure on $\mathcal{H}(B)$ is easier to describe. As shown in Proposition \ref{Lie}, $\mathcal{H}(B)=U(L)$ where $L$ is the primitive space of $\mathcal{H}(B)$. Therefore, all we have to do is to describe the Lie bialgebra structure on $L$.

Recall that $B$ becomes a Lie coalgebra with $\delta=\Delta-\Delta^{op}$.

\begin{lemma}\label{cobracket}
Suppose that the base field $k$ has characteristic $0$. Let $B$ be a finitely generated connected Poisson Hopf algebra and let $J:=\ker\theta$. Then
$$\delta(J)\subset B\otimes J+ J\otimes B,$$
where $\delta=\Delta-\Delta^{op}$. Consequently, $B/J= \mathfrak{m}/\mathfrak{m}^2$ is a quotient Lie coalgebra of $B$, where $\mathfrak{m}=B^+$.
\end{lemma}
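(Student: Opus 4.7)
The plan is to reduce the inclusion to pure products $bc$ with $b,c\in\mathfrak{m}$ and then exploit a derivation-like identity for $\delta$. From the proof of Proposition \ref{Lie} we already know $J=k\cdot 1+\mathfrak{m}^2$; since $\delta(1)=\Delta(1)-\Delta^{op}(1)=0$, it suffices to show that $\delta(bc)\in B\otimes J+J\otimes B$ for all $b,c\in\mathfrak{m}$.

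The central ingredient is the identity
\begin{equation*}
\delta(bc)=\Delta(b)\,\delta(c)+\delta(b)\,\Delta^{op}(c),
\end{equation*}
which one verifies by adding and subtracting $\Delta(b)\Delta^{op}(c)$ and using that both $\Delta$ and $\Delta^{op}$ are algebra maps (the latter because $B$ is commutative). The complementary ingredient is the standard counit observation that $\bar{\Delta}(x):=\Delta(x)-x\otimes 1-1\otimes x$ lies in $\mathfrak{m}\otimes\mathfrak{m}$ whenever $x\in\mathfrak{m}$, from which $\delta(x)=\bar{\Delta}(x)-\bar{\Delta}^{op}(x)\in\mathfrak{m}\otimes\mathfrak{m}$.

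Putting these together, I would expand
$\Delta(b)\,\delta(c)=(b\otimes 1)\delta(c)+(1\otimes b)\delta(c)+\bar{\Delta}(b)\,\delta(c)$. Since $\delta(c)\in\mathfrak{m}\otimes\mathfrak{m}$, the three summands land respectively in $\mathfrak{m}^2\otimes\mathfrak{m}$, $\mathfrak{m}\otimes\mathfrak{m}^2$, and $\mathfrak{m}^2\otimes\mathfrak{m}^2$, all contained in $B\otimes J+J\otimes B$. The symmetric expansion takes care of $\delta(b)\,\Delta^{op}(c)$, so summing yields $\delta(bc)\in B\otimes J+J\otimes B$. Therefore $\delta(J)\subset B\otimes J+J\otimes B$, the cobracket descends to $B/J$, and Proposition \ref{Lie} gives the identification $B/J\cong\mathfrak{m}/\mathfrak{m}^2$, finishing the argument. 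There is no genuine obstacle once one spots the derivation-type formula for $\delta$; without it, a direct nine-term expansion of $\Delta(b)\Delta(c)-\Delta^{op}(b)\Delta^{op}(c)$ also works but is considerably more tedious.
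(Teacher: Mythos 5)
Your proof is correct and takes essentially the same route as the paper: both reduce to $\delta(y_1y_2)$ with $y_1,y_2\in\mathfrak{m}$ (using $J=k1+\mathfrak{m}^2$ and $\delta(1)=0$), both use that the reduced comultiplication of an element of $\mathfrak{m}$ lies in $\mathfrak{m}\otimes\mathfrak{m}$, and both conclude by expanding $\delta$ of the product and checking each term lands in $B\otimes J+J\otimes B$. Your identity $\delta(bc)=\Delta(b)\delta(c)+\delta(b)\Delta^{op}(c)$ is merely a tidier grouping of the same expansion the paper carries out (indeed it retains a cross term that the paper's displayed formula omits, but that term also lies in $B\otimes J+J\otimes B$, so nothing changes).
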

\begin{proof}
 As shown in the proof of Proposition \ref{Lie}, $J=k1+\mathfrak{m}^2$. Let $T=\mathfrak{m}^2$ and we only need to show that 
$$\delta(T)\subset B\otimes J+ J\otimes B.$$
Pick an element in $T$. Without loss of generality, we may assume that the element is of the form $y_1y_2$ where $y_i\in B^+$. Moreover, since $B$ is connected as a coalgebra, we can assume that $\Delta(y_i)=1\otimes y_i+y_i\otimes 1+u_i$ where $u_i\in B^+\otimes B^+$. Denote $\tau(u_i)$ by $u_i^{op}$ where $\tau: B\otimes B\rightarrow B\otimes B$ is the flip map. Then 
\begin{equation}
\delta(y_1y_2)= u_1\Delta(y_2)-u_1^{op}\Delta^{op}(y_2),
\end{equation}
which is obviously in $ B\otimes J+ J\otimes B$. This completes the proof.
\end{proof}

We use $\overline{\delta}$ to denote the cobracket on $\mathfrak{m}/\mathfrak{m}^2$ defined in the previous lemma. As we have shown in the proof of Proposition \ref{Lie}, the map $\theta: B\rightarrow \mathcal{H}(B)$ induces a Lie algebra isomorphism from $ \mathfrak{m}/\mathfrak{m}^2$ to $L$, the space of primitive elements of $\mathcal{H}(B)$. We denote this isomorphism by $\overline{\theta}$. Now the cobracket $\overline{\delta}$ passes to $L$ via $\overline{\theta}$. Namely, 
$$\delta':=(\overline{\theta}\otimes \overline{\theta})\delta \overline{\theta}^{-1}$$
is a cobracket on $L$.
\begin{proposition}
Suppose that the base field $k$ has characteristic $0$. Let $B$ be a finitely generated connected Poisson Hopf algebra and let $L$ be the space of primitive elements of $\mathcal{H}(B)$. Then $L$ becomes a Lie bialgebra via $\delta'$ defined above.
\end{proposition}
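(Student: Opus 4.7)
The plan is to transport the question to $\mathfrak{a}=\mathfrak{m}/\mathfrak{m}^2$ via the Lie algebra isomorphism $\overline{\theta}$ established in Proposition \ref{Lie}. Since $\delta'=(\overline{\theta}\otimes\overline{\theta})\overline{\delta}\,\overline{\theta}^{-1}$ and $\overline{\theta}$ is an isomorphism of Lie algebras, it suffices to verify that $(\mathfrak{a},[\cdot,\cdot],\overline{\delta})$ is a Lie bialgebra, where the Lie bracket on $\mathfrak{a}$ is the one induced from the Poisson bracket on $B$ (which is precisely the bracket transported from $L$ via $\overline{\theta}$).

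For the Lie coalgebra axioms, antisymmetry is automatic: on $B$ we have $\delta=\Delta-\Delta^{\mathrm{op}}$, so $\tau\delta=-\delta$, and this passes to the quotient, giving $\overline{\delta}(\mathfrak{a})\subset \Asym(\mathfrak{a}\otimes\mathfrak{a})$. The co-Jacobi identity $(1+\xi+\xi^2)(1\otimes\delta)\delta=0$ on $B$ is a textbook consequence of the coassociativity of $\Delta$ (expand $(1\otimes(\Delta-\Delta^{\mathrm{op}}))(\Delta-\Delta^{\mathrm{op}})$ and observe that the six resulting terms cancel in pairs after antisymmetrization); the identity descends to $\mathfrak{a}$ via the quotient map.

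The core calculation is the cocycle compatibility $\overline{\delta}([\bar a,\bar b])=\bar a\cdot\overline{\delta}(\bar b)-\bar b\cdot\overline{\delta}(\bar a)$. I would lift $\bar a,\bar b$ to elements $a,b\in\mathfrak{m}$ and, using connectedness, write $\Delta(a)=1\otimes a+a\otimes 1+u_a$ and $\Delta(b)=1\otimes b+b\otimes 1+u_b$ with $u_a,u_b\in \mathfrak{m}\otimes\mathfrak{m}$. Starting from the Poisson-Hopf axiom $\Delta\{a,b\}=\{\Delta a,\Delta b\}$ (and the analogous identity for $\Delta^{\mathrm{op}}$), I would compute $\delta\{a,b\}=\{\Delta a,\Delta b\}-\{\Delta^{\mathrm{op}}a,\Delta^{\mathrm{op}}b\}$ by expanding via the Poisson structure on $B\otimes B$ from Proposition \ref{tensor}. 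The bracket $\{1\otimes a,b\otimes 1\}$ and its siblings vanish, while $\{1\otimes a,1\otimes b\}+\{a\otimes 1,b\otimes 1\}$ contributes only to the $J$-part and cancels with its $\Delta^{\mathrm{op}}$-counterpart. Crucially, $\{u_a,u_b\}$ lands in $\mathfrak{m}^2\otimes\mathfrak{m}+\mathfrak{m}\otimes\mathfrak{m}^2\subset J\otimes B+B\otimes J$ and hence is killed when projecting to $\mathfrak{a}\otimes\mathfrak{a}$. The surviving cross-terms then regroup: those containing $u_b$ assemble into $a\cdot(u_b-u_b^{\mathrm{op}})=a\cdot\delta(b)$, and those containing $u_a$ assemble (with a sign from antisymmetry of the bracket) into $-b\cdot(u_a-u_a^{\mathrm{op}})=-b\cdot\delta(a)$. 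Reducing modulo $B\otimes J+J\otimes B$ gives the desired identity in $\mathfrak{a}\otimes\mathfrak{a}$, and transporting through $\overline{\theta}\otimes\overline{\theta}$ yields the compatibility for $\delta'$ on $L$.

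The hard part is purely the bookkeeping in the last step: showing that the ``quadratic'' cross-term $\{u_a,u_b\}$ is truly negligible modulo $B\otimes J+J\otimes B$, and that the six or so surviving linear-in-$u$ terms regroup cleanly as $a\cdot\delta(b)-b\cdot\delta(a)$ rather than producing spurious residue. Everything else (antisymmetry, co-Jacobi, and the passage from $\mathfrak{a}$ to $L$) is a formal consequence of the facts that $\delta=\Delta-\Delta^{\mathrm{op}}$ comes from a coassociative comultiplication and that $\overline{\theta}$ is a Lie algebra isomorphism.
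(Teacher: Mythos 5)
Your proposal is correct and takes essentially the same route as the paper: the paper verifies exactly this $1$-cocycle identity by expanding $\delta(\{x,y\})=\{\Delta x,\Delta y\}-\{\Delta^{op}x,\Delta^{op}y\}$ via the tensor Poisson bracket and applying $\theta\otimes\theta$, where the derivation property $\theta(uv)=\e(u)\theta(v)+\e(v)\theta(u)$ together with $\ker\theta=k1+\mathfrak{m}^2$ does precisely the work of your reduction modulo $J\otimes B+B\otimes J$ (in particular it disposes of the quadratic term $\{u_a,u_b\}$ and regroups the linear terms into $a\cdot\delta'(b)-b\cdot\delta'(a)$). The Lie coalgebra axioms are, as in your write-up, inherited from $\delta=\Delta-\Delta^{op}$ and Lemma \ref{cobracket}, so nothing essential differs.
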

\begin{proof}
Clearly, $L$ is a Lie algebra via the commutator $[a, b]:=ab-ba$. Now we have to check the condition $(\ref{Liebialgebra})$ holds.
For any $a, b\in L$, choose $x, y\in \mathfrak{m}=B^+$ such that $a=\theta(x)$ and $b=\theta(y)$. It is clear that
$$\delta'([a, b])=(\theta\otimes \theta)\delta(\{x, y\}).$$
By definition, $\delta=\Delta-\Delta^{op}$. 
Hence 
\begin{equation*}
\delta'(a)=\theta(x_1)\otimes \theta(x_2)-\theta(x_2)\otimes \theta(x_1),
\end{equation*}
and 
\begin{equation*}
\delta'(b)=\theta(y_1)\otimes \theta(y_2)-\theta(y_2)\otimes \theta(y_1).
\end{equation*}
Notice that for any $u, v\in B$, $\theta(uv)=\e(u)\theta(v)+\e(v)\theta(u)$. Therefore,
\begin{align*}
&\,\,(\theta\otimes \theta)\delta(\{x, y\})\\
=&\,\,(\theta\otimes \theta)(x_1y_1\otimes \{x_2, y_2\}+\{x_1, y_1\}\otimes x_2y_2-x_2y_2\otimes \{x_1, y_1\}-\{x_2, y_2\}\otimes x_1y_1)\\
=&\,\,\theta(y_1)\otimes [\theta(x), \theta(y_2)]+ \theta(x_1)\otimes [\theta(x_2), \theta(y)]+[\theta(x), \theta(y_1)]\otimes \theta(y_2)
+[\theta(x_1), \theta(y)]\otimes \theta(x_2)\\
&\,\,-\theta(y_2)\otimes [\theta(x), \theta(y_1)]-\theta(x_2)\otimes [\theta(x_1), \theta(y)]-[\theta(x), \theta(y_2)]\otimes \theta(y_1)
-[\theta(x_2), \theta(y)]\otimes \theta(x_1)\\
=&\,\, a\cdot \delta'(b)-b\cdot\delta'(a).
\end{align*}
This completes the proof.
\end{proof}

\begin{example}
{\rm Let $B$ be the Poisson Hopf algebra defined in Example \ref{GK3}. As a Hopf algebra, $\mathcal{H}(B)$ is just $U(L)$, where $L$ is the Lie algebra spanned by $\{y_1, y_2, y_3\}$ with the following Lie bracket
\begin{align*}
[y_1, y_2]=0,\\
[y_3, y_1]= \lambda_1y_1+ \alpha y_2,\\
[y_3, y_2]=\lambda_2y_2,
\end{align*}
where $\alpha=0$ if $\lambda_1\neq \lambda_2$ and $\alpha=0$ or $1$ if $\lambda_1=\lambda_2$. The cobracket $\delta$ is given by 
\begin{align*}
\delta(y_1)&=\delta(y_2)=0\\
\delta(x_3)&=2(y_1\otimes y_2-y_2\otimes y_1).
\end{align*}}
\end{example}

\subsection{Normal basis property}
Let $B$ be a Poisson Hopf algebra. In Corollary \ref{freeness}, it is shown that $B\subset B^e$ is an $\mathcal{H}(B)$-extension. In what follows we will investigate the {\it normal basis property} \cite[Definition 8.2.1]{Mo} of this extension.

Let $I$ be the left ideal of $B^e$ generated by $h_x$ where $x\in B$. As shown in Proposition \ref{decomp},
$B^e\cong B\oplus I$ as left $B$-modules and $I$ is isomorphic to the universal Poisson derivation $\Omega_B$.

Consider the map $p:=(Id\otimes \pi h)\circ \Delta: B\rightarrow B\otimes \mathcal{H}(B)$. A straightforward calculation shows the following.
\begin{lemma}
The map $p: B\rightarrow B\otimes \mathcal{H}(B)$ defined above is a Poisson derivation.
\end{lemma}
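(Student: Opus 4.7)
The plan is to verify directly the two defining axioms of a Poisson derivation $p:B\to B\otimes\mathcal{H}(B)$, where $B\otimes\mathcal{H}(B)$ is viewed as a Poisson $B$-module in the canonical way. First I would pin down this Poisson module structure. Since $B$ is a normal Hopf subalgebra of $B^e$ by Proposition \ref{normalsub}, we have $B^eB^+=B^+B^e$, so the $B$-action on $\mathcal{H}(B)$ induced from left multiplication in $B^e$ is just the trivial one $b\cdot\bar{x}=\e(b)\bar{x}$, while the bracket action on $\mathcal{H}(B)$ comes from left multiplication by $h_b$ and hence reads $\{b,\bar{x}\}=\theta(b)\bar{x}$. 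Applying the tensor-product formulas of Section \ref{pre} to $B$ (the regular Poisson $B$-module) and this $\mathcal{H}(B)$, we obtain on $B\otimes\mathcal{H}(B)$ the action $a\cdot(c\otimes\bar{x})=ac\otimes\bar{x}$ and the bracket
\[
\{a,c\otimes\bar{x}\}=a_1c\otimes\theta(a_2)\bar{x}+\{a,c\}\otimes\bar{x},
\]
after simplifying with $a_1\e(a_2)=a$.

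Next I would record two algebraic identities about $\theta=\pi\circ h$ that follow directly from the defining relations \eqref{5.1}--\eqref{5.5} of $B^e$ together with the fact that $\pi$ kills $B^+$: namely $\theta(xy)=\e(x)\theta(y)+\e(y)\theta(x)$ coming from \eqref{hxy}, and $\theta(\{x,y\})=\theta(x)\theta(y)-\theta(y)\theta(x)$ coming from the Lie relation on the $h$'s. Using these, the Leibniz rule $p(ab)=a\cdot p(b)+b\cdot p(a)$ is immediate: expand $p(ab)=a_1b_1\otimes\theta(a_2b_2)$, apply the first identity, and collapse $\e$'s against $\Delta$.

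The Lie-bracket axiom $p(\{a,b\})=\{a,p(b)\}-\{b,p(a)\}$ is the computation that needs the most care. I would first use that $\Delta$ is a Poisson algebra map (so $\Delta\{a,b\}=a_1b_1\otimes\{a_2,b_2\}+\{a_1,b_1\}\otimes a_2b_2$, using commutativity of $B$) together with both identities above to rewrite
\[
p(\{a,b\})=a_1b_1\otimes\bigl(\theta(a_2)\theta(b_2)-\theta(b_2)\theta(a_2)\bigr)+\{a_1,b\}\otimes\theta(a_2)+\{a,b_1\}\otimes\theta(b_2).
\]
Then I would compute $\{a,p(b)\}$ and $\{b,p(a)\}$ from the tensor-product bracket formula above, observe that the cross-terms $\{a,b_1\}\otimes\theta(b_2)$ and $\{a_1,b\}\otimes\theta(a_2)$ appear on the right, and that the remaining pieces $a_1b_1\otimes\theta(a_2)\theta(b_2)$ and $b_1a_1\otimes\theta(b_2)\theta(a_2)$ subtract to produce exactly the commutator term above (using $B$-commutativity so $b_1a_1=a_1b_1$).

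The main obstacle is really only bookkeeping: making sure the Sweedler indices line up correctly when one passes between $\Delta(\{a,b\})$, the tensor-product Poisson module bracket, and the two multiplicative/Lie identities for $\theta$. Nothing deeper is needed beyond the structural facts already proved, namely the Hopf formulas of Theorem \ref{Hopfstr}, the defining relations of $B^e$, and Proposition \ref{normalsub}.
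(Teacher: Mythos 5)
Your proposal is correct, and it is exactly the verification the paper has in mind: the paper dismisses this lemma with ``a straightforward calculation,'' and your calculation supplies it, using the natural Poisson module structure on $B\otimes\mathcal{H}(B)$ (regular module tensored with the quotient module $\mathcal{H}(B)$, via the tensor-product formula for Poisson modules), which is indeed the structure used later in Lemma \ref{LU} and Theorem \ref{normalbasis}. The two identities for $\theta=\pi\circ h$ and the collapse of the $B$-action on $\mathcal{H}(B)$ to the trivial one via normality are all justified as you state, so there is no gap.
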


Since $I$ can be identified as the universal Poisson derivation, there is a $B^e$-module map $\tau: I \rightarrow B\otimes \mathcal{H}(B)$ such that $p=\tau h$. Let $\iota: B\rightarrow B\otimes \mathcal{H}(B)$ be the $B$-module map sending $b$ to $b\otimes 1$. Now there is a (left) $B$-module map 
\begin{equation}\label{Upsilon}
\Upsilon:=\iota\oplus \tau: B^e\rightarrow B\otimes \mathcal{H}(B),
\end{equation}
where we identify $B^e$ with $B\oplus I$.

Next, we are going to show that $\Upsilon: B^e\rightarrow B\otimes \mathcal{H}(B)$ is also a right $\mathcal{H}(B)$-comodule map. That is to say, the diagram\begin{equation}\label{comodule}
\begin{CD}
B^e@>\Upsilon>> B\otimes \mathcal{H}(B)\\
@V\lambda VV   @V\rho VV \\
B^e\otimes \mathcal{H}(B)@>\Upsilon\otimes Id>> B\otimes \mathcal{H}(B)\otimes \mathcal{H}(B)\\
\end{CD}
\end{equation}
commutes, where $\lambda$ and $\rho$ are comodule structure maps. 

Before we prove that $\Upsilon$ is an $\mathcal{H}(B)$-comodule map, a technical lemma regarding the maps $\lambda$ and $\Upsilon$ is needed. First, we will set up some conventions. As before, we let $\theta=\pi h$. Also recall that, for any $a, b\in B$, $h_a\cdot b$ is defined to be $\{a, b\}$.

For a set $S$, a {\it partition} $\mathcal{P}$ of $S$ is a collection of subsets of $S$ such that $\emptyset\notin \mathcal{P}$, $\bigcup_{L\in \mathcal{P}}L=S$ and $L_1\bigcap L_2=\emptyset$ for any distinct $L_1, L_2\in \mathcal{P}$.
For a finite set $S\subset B$ of $n$ elements, we choose a total order on it, say $a^{(n)}>a^{(n-1)}>...> a^{(1)}$. For a subset $J=\{a^{(\ell_i)}\}_{i=1}^s$ of $S$, let 
\begin{equation}\label{hJ}
h_J=h_{\ell_s}h_{\ell_{s-1}}\cdots h_{\ell_1},
\end{equation}
and
\begin{equation}\label{TJ}
T_{J}:=h_{\ell_s}\cdot(h_{\ell_{s-1}}\cdots(h_{\ell_{2}}\cdot a^{(\ell_1)}_1)\cdots)
\end{equation}
where $\ell_s>\cdots> \ell_1$ and $h_i:=h_{a^{(i)}}$. For a partition $\mathcal{P}=\{J_1, \cdots, J_N\}$ of $S$ with $t(J_1)>\cdots> t(J_N)$ where $t(J_i)$ is the smallest element in $J_i$, set 
\begin{equation}\label{TP}
T_\mathcal{P}=T_{J_1}T_{J_2}\cdots T_{J_N}.
\end{equation}
Also, let
\begin{equation}\label{thetaP}
\theta_{\mathcal{P}}:=\theta(t(J_1)_2)\cdots\theta (t(J_N)_2).
\end{equation}

\begin{remark}\label{convention}
For the purpose of our calculation, we adopt the convention that $\mathcal{P}=\{\emptyset\}$ is the only partition of $S=\emptyset$. Also, we set $h_\emptyset=1$, $T_\mathcal{P}=1$ and $\theta_\mathcal{P}=1$, where $\mathcal{P}=\{\emptyset\}$.

For the symbols $h_J$, $T_{\mathcal{P}}$ and $\theta_{\mathcal{P}}$, one should view them as elements in $B^e$, $B$ and $\mathcal{H}(B)$, respectively. Since $B$ is commutative, the order that we arrange $T_{J_i}$ in the definition of $T_\mathcal{P}$ does not matter. Also notice that if $J=\{a^{(i)}\}$, then 
$$T_J=a^{(i)}_1.$$
\end{remark}

Now we are ready for the lemmas.
\begin{lemma}\label{LU}
Retain the above notation. Then
\begin{equation}\label{lambda}
\lambda(h_S)=\sum_{J\subset S}\sum_{\text{partitions $\mathcal{P}$ of $S\setminus J$}}T_{\mathcal{P}}h_{J}\otimes \theta_{\mathcal{P}},
\end{equation}
where the first summation runs through all subsets $J$ of $S$ and the second summation runs through all partitions of $S\setminus J$. Also,
\begin{equation}\label{Upsilon}
\Upsilon(h_S)=\sum_{\text{partitions $\mathcal{P}$ of $S$}}T_{\mathcal{P}}\otimes \theta_{\mathcal{P}}.
\end{equation}
\end{lemma}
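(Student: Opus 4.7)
My plan is to proceed by induction on $n = |S|$, exploiting the multiplicativity of $\lambda$ and the $B^e$-linearity of $\tau$. The two formulas share the same combinatorial core: enumerating structures on $S$ by the role of the maximum element $a^{(n)}$.

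For the base case $n = 1$, $S = \{a\}$, Theorem \ref{Hopfstr} yields $\lambda(h_a) = (\mathrm{id} \otimes \pi)(a_1 \otimes h_{a_2} + h_{a_1} \otimes a_2) = a_1 \otimes \theta(a_2) + h_a \otimes 1$, which matches the right side of (\ref{lambda}) enumerated over $J = \emptyset$ (with $\mathcal{P} = \{\{a\}\}$) and $J = \{a\}$; simultaneously $\Upsilon(h_a) = \tau(h_a) = p(a) = a_1 \otimes \theta(a_2)$ matches (\ref{Upsilon}).

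For the inductive step, set $a = a^{(n)}$ and $S' = S \setminus \{a\}$, so $h_S = h_a h_{S'}$. To prove (\ref{lambda}), use multiplicativity of $\lambda$ and the inductive hypothesis to write
\[
\lambda(h_S) = \bigl(a_1 \otimes \theta(a_2) + h_a \otimes 1\bigr) \cdot \sum_{J' \subseteq S'} \sum_{\mathcal{P}'} T_{\mathcal{P}'} h_{J'} \otimes \theta_{\mathcal{P}'}.
\]
The crucial manipulation is to move $h_a$ past $T_{\mathcal{P}'} \in B$ using $h_a T_{\mathcal{P}'} = T_{\mathcal{P}'} h_a + \{a, T_{\mathcal{P}'}\}$ and to expand the bracket via the Leibniz rule over the factorization $T_{\mathcal{P}'} = T_{J_1} \cdots T_{J_N}$; by definition (\ref{TJ}) and because $a$ exceeds every element of any $J_i$, the summand $\{a, T_{J_i}\} = h_a \cdot T_{J_i}$ equals $T_{J_i \cup \{a\}}$. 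The resulting terms organize bijectively into the three mutually exclusive cases for how $a$ appears in a pair $(J, \mathcal{P})$: (i) $\{a\}$ is a singleton block of $\mathcal{P}$ (from the $a_1 \otimes \theta(a_2)$ factor; since $a$ is maximal, $\{a\}$ is automatically the first block, giving $T_\mathcal{P} = a_1 T_{\mathcal{P}'}$ and $\theta_\mathcal{P} = \theta(a_2) \theta_{\mathcal{P}'}$); (ii) $a \in J$ (from $T_{\mathcal{P}'} h_a h_{J'} = T_{\mathcal{P}'} h_J$); and (iii) $a$ is absorbed into a larger block $J_i \cup \{a\}$ of $\mathcal{P}$ (from the Leibniz piece, with $t(J_i)$ and hence $\theta_\mathcal{P}$ preserved).

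Formula (\ref{Upsilon}) is then obtained by applying $(\xi \otimes \mathrm{id})$ to (\ref{lambda}), where $\xi \colon B^e \to B$ is the $B^e$-linear augmentation with $\xi(1) = 1$ and hence $\xi(h_a) = h_a \cdot 1 = \{a, 1\} = 0$; consequently $\xi(T_\mathcal{P} h_J) = T_\mathcal{P} \cdot \xi(h_J)$ vanishes for $J \neq \emptyset$, leaving only the partitions of $S$. To conclude, one identifies $(\xi \otimes \mathrm{id})\lambda$ with $\Upsilon$ by noting that both are left $B$-linear maps $B^e \to B \otimes \mathcal{H}(B)$ agreeing with $\iota$ on $B$ and with $p$ on each $h_a \in I$; the universal property of $I \cong \Omega_B$ as the universal Poisson derivation of $B$ then forces the two $B^e$-module extensions of $p$ to coincide on $I$. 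The main obstacle is the case (iii) bookkeeping—verifying that the Leibniz expansion of $\{a, T_{\mathcal{P}'}\}$ is exactly the sum over block augmentations $J_i \mapsto J_i \cup \{a\}$, with the block minima (and therefore $\theta_\mathcal{P}$) preserved precisely because $a$ exceeds the relevant indices.
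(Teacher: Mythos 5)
Your induction for (\ref{lambda}) is essentially the paper's own argument: multiply $\lambda(h_{a^{(n)}})=a^{(n)}_1\otimes\theta(a^{(n)}_2)+h_n\otimes 1$ against the inductive expression for $\lambda(h_{S'})$, commute $h_n$ past $T_{\mathcal{P}'}$ via $h_nT_{\mathcal{P}'}=T_{\mathcal{P}'}h_n+h_n\cdot T_{\mathcal{P}'}$, expand the Poisson action by Leibniz over the blocks, and match the three resulting families with the trichotomy $a^{(n)}\in J$, $\{a^{(n)}\}\in\mathcal{P}$, or $a^{(n)}$ absorbed into a larger block; your observation that maximality of $a^{(n)}$ keeps the block minima (hence $\theta_{\mathcal{P}}$ and the block ordering) unchanged, and forces the singleton $\{a^{(n)}\}$ to be the leading block, is exactly the bookkeeping the paper performs. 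Where you genuinely diverge is (\ref{Upsilon}): the paper proves it by a parallel induction (using that $\Upsilon$ is a $B^e$-module map on $I$), whereas you deduce it from (\ref{lambda}) by applying $\xi\otimes\mathrm{id}$ and noting $\xi(T_{\mathcal{P}}h_J)=0$ for $J\neq\emptyset$, which is an economical shortcut. The one step you should tighten is the identification $(\xi\otimes\mathrm{id})\lambda=\Upsilon$ on $I$: left $B$-linearity together with agreement on the elements $h_a$ is not by itself enough, since $I$ is generated by the $h_a$ only as a left $B^e$-module (its $B$-module generators are the products $h_{a_1}\cdots h_{a_s}$, cf. Remark \ref{generalcase}). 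Your appeal to the universal property of $\Omega_B$ is the right idea, but it presupposes that $(\xi\otimes\mathrm{id})\lambda|_I$ is a $B^e$-module map for the same module structure on $B\otimes\mathcal{H}(B)$ used to define $\tau$, namely $u\cdot(b\otimes v)=u_1\cdot b\otimes\pi(u_2)v$; this does hold, and follows in one line from $\lambda$ being an algebra map with $\lambda(u)=u_1\otimes\pi(u_2)$ and $\xi$ being $B^e$-linear, but it should be stated and checked explicitly (after which either the universal property or simple generation of $I$ by the $h_a$ over $B^e$ finishes the identification).
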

\begin{proof}
Suppose that $S$ has $n$ elements with the ordering $a^{(n)}>a^{(n-1)}>...> a^{(1)}$. Also, as before, use $h_i$ for $h_{a^{(i)}}$. We are going to prove $(\ref{lambda})$ by induction on $n$. If $n=1$, then
$$\lambda(h_S)= a^{(1)}_1\otimes \theta(a^{(1)}_2)+h_1\otimes 1.$$
For the right-hand side of $(\ref{lambda})$, $J$ is either $S$ or $\emptyset$. If $J=S$, then $S\setminus J$ is the empty set and by our convention in Remark $\ref{convention}$, $T_\mathcal{P}=\theta_\mathcal{P}=1$. Hence $T_{\mathcal{P}}h_{J}\otimes \theta_{\mathcal{P}}$ gives $h_1\otimes 1$. If $J=\emptyset$, then $T_{\mathcal{P}}h_{J}\otimes \theta_{\mathcal{P}}$ becomes $a^{(1)}_1\otimes \theta(a^{(1)}_2)$. Consequently, the equation $(\ref{lambda})$ holds when $n=1$.

Now assume that $n\ge 2$ and let $S'$ be the subset $\{a^{(n-1)}, \cdots, a^{(1)}\}$. By induction hypothesis,
\begin{align}\label{lambda1}
\lambda(h_S)=&\,\,(a^{(n)}_1\otimes \theta(a^{(n)}_2)+h_n\otimes 1)h_{S'}\\
\nonumber=&\,\,(a^{(n)}_1\otimes \theta(a^{(n)}_2)+h_n\otimes 1)(\sum_{J'\subset S'}\sum_{\text{partitions $\mathcal{P'}$ of $S'\setminus J'$}}T_{\mathcal{P'}}h_{J'}\otimes \theta_{\mathcal{P'}})\\
\nonumber=&\,\,\sum_{J'\subset S'}\sum_{\text{partitions $\mathcal{P'}$ of $S'\setminus J'$}}(a_1^{(n)}T_{\mathcal{P'}}h_{J'}\otimes \theta(a_2^{(n)})\theta_{\mathcal{P'}}+h_nT_{\mathcal{P'}}h_{J'}\otimes \theta_{\mathcal{P'}})\\
\nonumber=&\,\,\sum_{J'\subset S'}\sum_{\text{partitions $\mathcal{P'}$ of $S'\setminus J'$}}(a_1^{(n)}T_{\mathcal{P'}}h_{J'}\otimes \theta(a_2^{(n)})\theta_{\mathcal{P'}}+T_{\mathcal{P'}}h_nh_{J'}\otimes \theta_{\mathcal{P'}}+(h_n\cdot T_{\mathcal{P'}})h_{J'}\otimes \theta_{\mathcal{P'}})
\end{align}

For a partition $\mathcal{P'}$ of $S'\setminus J'$, since $h_{n}$ acts as a derivation on $T_{\mathcal{P'}}$, we have
\begin{equation}
h_{n}\cdot(T_{\mathcal{P'}})=\sum_{\mathcal{P}}T_{\mathcal{P}}
\end{equation}
where $\mathcal{P}$ runs through the partitions of $S\setminus J'$ obtained by adding $a^{(n)}$ to a set in $\mathcal{P'}$. Notice that $\{a^{(n)}\}\notin \mathcal{P}$ by construction. Conversely, let $\mathcal{P}$ be a partition of $S\setminus J'$ and suppose that $\{a^{(n)}\}\notin \mathcal{P}$. Suppose $\mathcal{P}=\{I_1, I_2, \cdots, I_s\}$ and $a^{(n)}\in I_i$ for some $i$. By assumption the set $I_i\setminus \{a^{(n)}\}$ is not empty. Hence $\mathcal{P'}: =\{I_1,\cdots, I_i\setminus \{a^{(n)}\}, \cdots, I_s\}$ is a partition of $S'\setminus J'$ and $T_{\mathcal{P}}$ would appear as a summand of $h_{n}\cdot(T_{\mathcal{P'}})$. To sum up, we have
\begin{equation}
\sum_{J'\subset S'}\sum_{\text{partitions $\mathcal{P'}$ of $S'\setminus J'$}}(h_n\cdot T_{\mathcal{P'}})h_{J'}\otimes \theta_{\mathcal{P'}}= \sum_{\substack{J\subset S\\ a^{(n)}\notin J}}\sum_{\substack{\text{partitions $\mathcal{P}$} \\ \text{of}\,S\setminus J\\ \{a^{(n)}\}\notin \mathcal{P}}}T_{\mathcal{P}}h_J\otimes \theta_{\mathcal{P}}.
\end{equation}

Combining this with $(\ref{lambda1})$, we get
\begin{align*}
\lambda(h_S)=&\,\, \sum_{\substack{J\subset S\\ a^{(n)}\notin J}}\sum_{\substack{\text{partitions $\mathcal{P}$} \\ \text{of}\,S\setminus J\\ \{a^{(n)}\}\in \mathcal{P}}}T_{\mathcal{P}}h_J\otimes \theta_{\mathcal{P}}\\
&\,\,+\sum_{\substack{J\subset S\\ a^{(n)}\in J}}\sum_{\substack{\text{partitions $\mathcal{P}$} \\ \text{of}\,S\setminus J}}T_{\mathcal{P}}h_J\otimes \theta_{\mathcal{P}}
+\sum_{\substack{J\subset S\\ a^{(n)}\notin J}}\sum_{\substack{\text{partitions $\mathcal{P}$} \\ \text{of}\,S\setminus J\\ \{a^{(n)}\}\notin \mathcal{P}}}T_{\mathcal{P}}h_J\otimes \theta_{\mathcal{P}}\\
=&\,\,\sum_{J\subset S}\sum_{\text{partitions $\mathcal{P}$ of $S\setminus J$}}T_{\mathcal{P}}h_{J}\otimes \theta_{\mathcal{P}}.
\end{align*}
Hence the equation $(\ref{lambda})$ holds. The equation $(\ref{Upsilon})$ can be proved similarly so we omit the details here.
\end{proof}

\begin{proposition}\label{comodulemap}
The map $\Upsilon: B^e\rightarrow B\otimes \mathcal{H}(B)$ is a right $\mathcal{H}(B)$-comodule map.
\end{proposition}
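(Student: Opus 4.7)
The plan is to verify commutativity of diagram (\ref{comodule}) by reducing to elements of the form $h_S$ and then comparing both composites via the explicit expansions of Lemma \ref{LU}, with the primitivity of $\theta(b)$ in $\mathcal{H}(B)$ doing the key work.

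First I would observe that both composites $(\Upsilon \otimes \mathrm{Id})\lambda$ and $\rho\Upsilon = (\mathrm{Id} \otimes \Delta)\Upsilon$ are left $B$-linear. Indeed, $\lambda = (\mathrm{Id} \otimes \pi)\Delta$ is an algebra map with $\lambda(b) = b \otimes 1$ for $b \in B$, and $\Upsilon = \iota \oplus \tau$ is a $B$-module map by construction (since $\iota$ is left $B$-linear and $\tau$ is even $B^e$-linear). By Remark \ref{generalcase}, $B^e$ is generated as a left $B$-module by the ordered products $h_S$, so it suffices to verify $(\Upsilon \otimes \mathrm{Id})\lambda(h_S)=(\mathrm{Id} \otimes \Delta)\Upsilon(h_S)$.

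Applying $(\Upsilon \otimes \mathrm{Id})$ to formula (\ref{lambda}) and using left $B$-linearity of $\Upsilon$ together with formula (\ref{Upsilon}) on each factor $\Upsilon(h_J)$ yields
\[
(\Upsilon \otimes \mathrm{Id})(\lambda(h_S)) = \sum_{J \subset S}\ \sum_{\mathcal{P}\text{ of }S\setminus J}\ \sum_{\mathcal{Q}\text{ of }J} T_\mathcal{P} T_\mathcal{Q} \otimes \theta_\mathcal{Q} \otimes \theta_\mathcal{P}.
\]
Triples $(J, \mathcal{P}, \mathcal{Q})$ are in bijection with pairs $(\mathcal{R}, \mathcal{Q})$ consisting of a partition $\mathcal{R}$ of $S$ together with a sub-collection $\mathcal{Q} \subset \mathcal{R}$, via $\mathcal{R} = \mathcal{P} \sqcup \mathcal{Q}$ and $J = \bigcup \mathcal{Q}$. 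Commutativity of $B$ gives $T_\mathcal{P} T_\mathcal{Q} = T_\mathcal{R}$, so this expression rewrites as
\[
\sum_{\mathcal{R} \text{ of } S} T_\mathcal{R} \otimes \Bigl(\sum_{\mathcal{Q} \subset \mathcal{R}} \theta_\mathcal{Q} \otimes \theta_{\mathcal{R} \setminus \mathcal{Q}}\Bigr).
\]

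On the other hand, $(\mathrm{Id} \otimes \Delta)(\Upsilon(h_S)) = \sum_\mathcal{R} T_\mathcal{R} \otimes \Delta(\theta_\mathcal{R})$ by (\ref{Upsilon}). By the proof of Proposition \ref{primitivegen}, each $\theta(b)$ is primitive in $\mathcal{H}(B)$, so $\theta_\mathcal{R} = \theta(t(J_1)_2) \cdots \theta(t(J_N)_2)$ is a product of primitives in a fixed order; a short induction using $\Delta(xy) = \Delta(x)\Delta(y)$ gives
\[
\Delta(\theta_\mathcal{R}) = \sum_{\mathcal{Q} \subset \mathcal{R}} \theta_\mathcal{Q} \otimes \theta_{\mathcal{R}\setminus \mathcal{Q}},
\]
with the products on both sides taken in the order inherited from $\mathcal{R}$. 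Comparing with the previous display completes the proof. The main technical obstacle is keeping the orderings consistent when combining two partitions into one: commutativity of $B$ makes rearrangements of the $T$-factors harmless, and the fixed total order on $S$ induces a compatible order on every sub-collection of $\mathcal{R}$, so the $\theta$-factors, which live in the noncommutative algebra $\mathcal{H}(B)$, match precisely as well.
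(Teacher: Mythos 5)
Your proof is correct, but it is organized differently from the paper's. The paper also reduces to the generators $h_S$ via Lemma \ref{LU}, but its mechanism is to prove that $(\Upsilon\otimes Id)\lambda$ restricted to the ideal $I$ is a $B^e$-module map, by an induction that adjoins one element $a^{(n+1)}$ at a time and matches four families of sums (\ref{sum1})--(\ref{sum4}) against (\ref{sum1'})--(\ref{sum4'}); the comparison with $\rho\Upsilon$ is then completed (somewhat implicitly) through the $B^e$-linearity of $\tau$ and agreement on the generators $h_b$. You instead evaluate both composites on $h_S$ in closed form: $(\Upsilon\otimes Id)\lambda(h_S)$ via (\ref{lambda}) and (\ref{Upsilon}) applied to each $h_J$, and $(\mathrm{Id}\otimes\Delta)\Upsilon(h_S)$ via (\ref{Upsilon}) together with the primitivity of the elements $\theta(b)$ established in Proposition \ref{primitivegen}, and then match terms through the bijection $(J,\mathcal{P},\mathcal{Q})\leftrightarrow(\mathcal{R},\mathcal{Q}\subset\mathcal{R})$, using commutativity of $B$ for the $T$-factors and the restriction-compatible ordering of blocks by smallest elements for the $\theta$-factors. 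This buys a self-contained, term-by-term verification that avoids both the module-map step and the tacit final comparison in the paper, at the price of carrying the Sweedler legs shared between $T_{J_i}$ and $\theta(t(J_i)_2)$ through the bijection (which you do correctly, since the bijection preserves blocks). One cosmetic caveat, shared with the paper's own proof: Remark \ref{generalcase} yields $B$-module generators $h_{a_1}\cdots h_{a_s}$ possibly with repeated entries, so the ``set'' $S$ should be read as an ordered tuple with formally distinct labels; this does not affect either argument.
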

\begin{proof}
Since $B^e$ is the direct sum of $B$ and $I$, where $I$ is the left ideal of $B^e$ generated by $h_x$ where $x\in B$, it suffices to show $\rho\Upsilon= (\Upsilon\otimes Id)\lambda$ on $B$ and $I$. For any $b\in B$,
$$\rho\Upsilon(b)= (\Upsilon\otimes Id)\lambda(b)=b\otimes 1\otimes 1.$$
Therefore, it remains to show that $\rho\Upsilon$ and $(\Upsilon\otimes Id)\lambda$ agree on $I$.

First, we are going to show that $(\Upsilon\otimes Id)\lambda$ is a $B^e$-module map when restricted on $I$. Clearly, $(\Upsilon\otimes Id)\lambda$ is a $B$-module map. Hence, by Remark \ref{generalcase}, we only have to show that
\begin{equation}
(\Upsilon\otimes Id)\lambda(h(a^{(n+1)})\cdots h(a^{(1)}))=h_{a_{n+1}} \cdot(\Upsilon\otimes Id)\lambda(h(a^{(n)})\cdots h(a_1))
\end{equation}
for $n\ge 1$.
Let $S'$ be the set $\{a^{(n+1)}, \cdots, a^{(1)}\}$ with the ordering $a^{(n+1)}>\cdots> a^{(1)}$ and let $S$ be the subset $\{a^{(n)}, \cdots, a^{(1)}\}$. Hence with the notation we introduced before Lemma \ref{LU}, we need to show
\begin{equation}
(\Upsilon\otimes Id)\lambda(h_{S'})=h_{n+1}\cdot (\Upsilon\otimes Id)\lambda(h_S).
\end{equation}
By Lemma \ref{LU}, we have
\begin{align*}
&h_{n+1}\cdot (\Upsilon\otimes Id)\circ\lambda(h_S)\\
=&h_{n+1}\cdot(\Upsilon\otimes Id(\sum_{J\subset S}\sum_{\text{partitions $\mathcal{P}$ of $S\setminus J$}}T_{\mathcal{P}}h_{J}\otimes \theta_{\mathcal{P}}))\\
=& h_{n+1}\cdot(\sum_{J\subset S}\sum_{\text{partitions $\mathcal{P}$ of $S\setminus J$}}\sum_{\text{partitions $\mathcal{A}$ of $J$}}T_{\mathcal{P}}T_{\mathcal{A}}\otimes\theta_{\mathcal{A}}\otimes \theta_{\mathcal{P}})\\
=& \sum_{J\subset S}\sum_{\text{partitions $\mathcal{P}$ of $S\setminus J$}}\sum_{\text{partitions $\mathcal{A}$ of $J$}}(h_{n+1}\cdot(T_{\mathcal{P}}T_{\mathcal{A}})\otimes\theta_{\mathcal{A}}\otimes \theta_{\mathcal{P}}\\
+&a^{(n+1)}_1T_{\mathcal{P}}T_{\mathcal{A}}\otimes\theta(a^{(n+1)}_2)\theta_{\mathcal{A}}\otimes \theta_{\mathcal{P}}
+a^{(n+1)}_1T_{\mathcal{P}}T_{\mathcal{A}}\otimes\theta_{\mathcal{A}}\otimes \theta(a^{(n+1)}_2)\theta_{\mathcal{P}}).
\end{align*}
Hence $h_{n+1}\cdot (\Upsilon\otimes Id)\circ\lambda(h_S)$ can be broken up into the sum of the following expressions.
\begin{equation}\label{sum1}
\sum_{J\subset S}
\sum_{\substack{\text{partitions $\mathcal{P}$} \\ \text{of}\,S\setminus J}}
\sum_{\text{partitions $\mathcal{A}$ of $J$}}
h_{n+1}\cdot(T_{\mathcal{P}})T_{\mathcal{A}}\otimes\theta_{\mathcal{A}}\otimes \theta_{\mathcal{P}},
\end{equation}
\begin{equation}\label{sum2}
\sum_{J\subset S}
\sum_{\substack{\text{partitions $\mathcal{P}$} \\ \text{of}\,S\setminus J}}
\sum_{\text{partitions $\mathcal{A}$ of $J$}}
T_{\mathcal{P}}h_{n+1}\cdot(T_{\mathcal{A}})\otimes\theta_{\mathcal{A}}\otimes \theta_{\mathcal{P}},
\end{equation}
\begin{equation}\label{sum3}
\sum_{J\subset S}
\sum_{\substack{\text{partitions $\mathcal{P}$} \\ \text{of}\,S\setminus J}}
\sum_{\text{partitions $\mathcal{A}$ of $J$}}
a^{(n+1)}_1T_{\mathcal{P}}T_{\mathcal{A}}\otimes\theta(a^{(n+1)}_2)\theta_{\mathcal{A}}\otimes \theta_{\mathcal{P}},
\end{equation}
\begin{equation}\label{sum4}
\sum_{J\subset S}
\sum_{\substack{\text{partitions $\mathcal{P}$} \\ \text{of}\,S\setminus J}}
\sum_{\text{partitions $\mathcal{A}$ of $J$}}
a^{(n+1)}_1T_{\mathcal{P}}T_{\mathcal{A}}\otimes\theta_{\mathcal{A}}\otimes \theta(a^{(n+1)}_2)\theta_{\mathcal{P}}.
\end{equation}
On the other hand,
\begin{equation}\label{bigsum'}
(\Upsilon\otimes Id)\lambda(h_{S'})
 =\sum_{J'\subset S'}
\sum_{\substack{\text{partitions $\mathcal{P}'$} \\ \text{of}\,S'\setminus J'}}
 \sum_{\text{partitions $\mathcal{A}'$ of $J'$}}T_{\mathcal{P'}}T_{\mathcal{A'}}\otimes\theta_{\mathcal{A'}}\otimes \theta_{\mathcal{P'}}.
\end{equation}
Now, the sum (\ref{bigsum'}) can be rewritten into the sum of the following:

\begin{equation}\label{sum1'}
\sum_{\substack{J'\subset S'\\ a^{(n+1)}\notin J'}}\sum_{\substack{\text{partitions $\mathcal{P}'$} \\ \text{of}\,S'\setminus J'\\ \{a^{(n+1)}\}\notin \mathcal{P'}}}\sum_{\substack{\text{partitions $\mathcal{A}'$ of $J'$} }}
T_{\mathcal{P'}}T_{\mathcal{A'}}\otimes\theta_{\mathcal{A'}}\otimes \theta_{\mathcal{P'}}.
\end{equation}
\begin{equation}\label{sum2'}
\sum_{\substack{J'\subset S'\\ a^{(n+1)}\in J'}}\sum_{\substack{\text{partitions $\mathcal{P}'$} \\ \text{of}\,S'\setminus J'}}\sum_{\substack{\text{partitions $\mathcal{A}'$ of $J'$}\\ \{a^{(n+1)}\}\notin \mathcal{A'} }}T_{\mathcal{P'}}T_{\mathcal{A'}}\otimes\theta_{\mathcal{A'}}\otimes \theta_{\mathcal{P'}},
\end{equation}
\begin{equation}\label{sum3'}
\sum_{\substack{J'\subset S'\\ a^{(n+1)}\in J'}}
\sum_{\substack{\text{partitions $\mathcal{P}'$} \\ \text{of}\,S'\setminus J'}}
\sum_{\substack{\text{partitions $\mathcal{A}'$ of $J'$}\\ \{a^{(n+1)}\}\in \mathcal{A'} }}T_{\mathcal{P'}}T_{\mathcal{A'}}\otimes\theta_{\mathcal{A'}}\otimes \theta_{\mathcal{P'}},
\end{equation}

\begin{equation}\label{sum4'}
\sum_{\substack{J'\subset S'\\ a^{(n+1)}\notin J'}}\sum_{\substack{\text{partitions $\mathcal{P}'$} \\ \text{of}\,S'\setminus J'\\ \{a^{(n+1)}\}\in \mathcal{P'}}}\sum_{\substack{\text{partitions $\mathcal{A}'$ of $J'$} }}T_{\mathcal{P'}}T_{\mathcal{A'}}\otimes\theta_{\mathcal{A'}}\otimes \theta_{\mathcal{P'}},
\end{equation}

In (\ref{sum3'}), $T_{\mathcal{A'}}$ is of the form $a^{(n+1)}_1T_{\mathcal{A}}$ and $\theta_{\mathcal{A'}}$ is of the form $\theta(a^{(n+1)}_2)\theta_{\mathcal{A}}$, where $\mathcal{A}=\mathcal{A'}\setminus\{a^{(n+1)}\}$ is a partition of $S$. Hence (\ref{sum3'}) is equal to (\ref{sum3}). Similarly, one can show that (\ref{sum4'}) is equal to (\ref{sum4}).

For a partition $\mathcal{A}$ of a subset $J$ of $S$, since $h_{n+1}$ acts as a derivation on $T_{\mathcal{A}}$, we have
\begin{equation}
h_{n+1}\cdot(T_{\mathcal{A}})=\sum_{\mathcal{A}'}T_{\mathcal{A}'}
\end{equation}
where $\mathcal{A}'$ runs through all partitions of $J'=J\cup \{a^{(n+1)}\}\subset S'$ obtained by adding $a^{(n+1)}$ to a set in $\mathcal{A}$. Notice that $\{a^{(n+1)}\}\notin \mathcal{A}'$ by construction. Conversely, let $\mathcal{A}'$ be a partition of $J'=J\cup \{a^{(n+1)}\}\subset S'$ and suppose that $\{a^{(n+1)}\}\notin \mathcal{A}'$. Suppose $\mathcal{A}'=\{I_1, I_2, \cdots, I_s\}$ and $a^{(n+1)}\in I_i$ for some $i$. By assumption the set $I_i\setminus \{a^{(n+1)}\}$ is not empty. Hence $\mathcal{A}: =\{I_1,\cdots, I_i\setminus \{a^{(n+1)}\}, \cdots, I_s\}$ is a partition of $S$ and $T_{\mathcal{A}'}$ would appear as a summand of $h_{n+1}\cdot(T_{\mathcal{A}})$. Consequently, (\ref{sum2}) is equal to (\ref{sum2'}). Similarly, (\ref{sum1}) is equal to (\ref{sum1'}). This completes the proof.
\end{proof}

\begin{proposition}\label{injectivity}
The map $\Upsilon: B^e\rightarrow B\otimes \mathcal{H}(B)$ is injective.
\end{proposition}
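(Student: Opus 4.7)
The plan is to exploit the right $\mathcal{H}(B)$-comodule structure of $\Upsilon$, which was just established in Proposition \ref{comodulemap}. Since $\Upsilon$ is a comodule map, $\ker \Upsilon$ is automatically a right $\mathcal{H}(B)$-subcomodule of $B^e$, so the game reduces to showing that every nonzero subcomodule of $B^e$ meets $(B^e)^{co\mathcal{H}(B)}$ nontrivially, and then checking injectivity on the coinvariants.

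First I would observe that the restriction of $\Upsilon$ to $B \subset B^e$ is by construction exactly $\iota$, the map $b \mapsto b\otimes 1$, which is manifestly injective; moreover $(B^e)^{co\mathcal{H}(B)} = B$ by Corollary \ref{freeness}(2). So the remaining content is purely structural: I must show that any nonzero $\mathcal{H}(B)$-subcomodule of $B^e$ has nonzero coinvariants.

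For this I would invoke Proposition \ref{primitivegen}: $\mathcal{H}(B)$ is a connected Hopf algebra, so its coradical is one-dimensional. It follows from a standard coalgebra argument that the only simple right $\mathcal{H}(B)$-comodule is the trivial one, and hence any simple subcomodule of an $\mathcal{H}(B)$-comodule is automatically contained in the coinvariants. Since every nonzero element of $B^e$ generates a finite-dimensional subcomodule (local finiteness of comodules), which in turn admits a nonzero simple subcomodule, every nonzero $\mathcal{H}(B)$-subcomodule of $B^e$ meets $(B^e)^{co\mathcal{H}(B)} = B$ in a nonzero element.

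Putting these together, if $\ker \Upsilon \neq 0$, then $\ker \Upsilon \cap B \neq 0$; but $\Upsilon|_B = \iota$ is injective, contradicting the existence of a nonzero element in $\ker \Upsilon \cap B$. I do not anticipate a real obstacle in this proof: the combinatorial heavy lifting was already absorbed into Proposition \ref{comodulemap} and Lemma \ref{LU}, and what remains is a clean application of the connectedness of $\mathcal{H}(B)$. The one point that deserves explicit care is the \emph{socle meets coinvariants} principle for connected coalgebras, but this is a textbook consequence of local finiteness of comodules together with the fact that simple comodules correspond to simple subcoalgebras of the structure coalgebra.
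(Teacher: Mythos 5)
Your proposal is correct and follows essentially the same route as the paper's proof: both use Proposition \ref{comodulemap} to see that $\ker\Upsilon$ is an $\mathcal{H}(B)$-subcomodule, invoke connectedness of $\mathcal{H}(B)$ (Proposition \ref{primitivegen}) to conclude that any nonzero subcomodule meets the coinvariants $(B^e)^{co\mathcal{H}(B)}=B$, and then contradict the injectivity of $\Upsilon|_B=\iota$. The only cosmetic difference is that the paper phrases the socle-meets-coinvariants step via the description $s(B^e)=\lambda^{-1}(B^e\otimes \mathcal{H}(B)_0)$ from Andruskiewitsch--D\u{a}sc\u{a}lescu, whereas you argue directly from local finiteness and triviality of simple comodules.
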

\begin{proof}
If $\Upsilon$ is not injective, by Proposition \ref{comodulemap}, $\ker \Upsilon$ is a non-zero subcomodule of $B^e$. By \cite[5.1.1]{Mo}, $\ker \Upsilon$ must contain a simple subcomodule, i.e. $\ker \Upsilon \cap s(B^e)\neq \{0\}$, where $s(B^e)$ is the socle of the right $\mathcal{H}(B)$-comodule $B^e$. As stated in \cite[Section 2]{AD}, $s(B^e)=\lambda^{-1}(B^e\otimes \mathcal{H}(B)_0)$. By Proposition \ref{primitivegen}, $\mathcal{H}(B)_0$, the coradical of $\mathcal{H}(B)$, is equal to $k1$. Hence, $s(B^e)=(B^e)^{co\mathcal{H}(B)}$. Now it follows from Proposition \ref{freeness} that $s(B^e)=B$. However, by construction, $\Upsilon$ is injective when restricted on $B$, i.e. $\ker \Upsilon \cap B= \{0\}$. This is a contradiction and therefore $\Upsilon$ is an injective map.
\end{proof}

Now it is very tempting to claim that $\Upsilon$ is a bijection. Unfortunately, we are only able to show the surjectivity of $\Upsilon$ when the Poisson Hopf algebra $B$ is pointed.

\begin{theorem}\label{normalbasis}
Let $B$ be a pointed Poisson Hopf algebra. Then the $\mathcal{H}(B)$-extension $B\subset B^e$ has the normal basis property. That is, 
\begin{equation*}
B^e\cong B\otimes \mathcal{H}(B)
\end{equation*}
as left $B$-modules and right $\mathcal{H}(B)$-modules. The isomorphism is given by the map $\Upsilon$ defined in (\ref{Upsilon}).
\end{theorem}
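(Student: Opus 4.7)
The plan is to prove the theorem by showing that the map $\Upsilon$ of $(\ref{Upsilon})$ is itself a bijection. Since $\Upsilon$ is $B$-linear by construction and right $\mathcal{H}(B)$-colinear by Proposition \ref{comodulemap}, and its injectivity is already given by Proposition \ref{injectivity}, the only remaining task is to establish surjectivity.

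The key idea is to split $\Upsilon$ in the category of right $\mathcal{H}(B)$-comodules. By Corollary \ref{freeness}(3), $B^e$ is an injective right $\mathcal{H}(B)$-comodule. The general categorical fact that monomorphisms out of injective objects split then produces a right $\mathcal{H}(B)$-colinear retraction $\sigma: B \otimes \mathcal{H}(B) \to B^e$ satisfying $\sigma \circ \Upsilon = \text{id}_{B^e}$. Setting $K := \ker \sigma$, one obtains a direct sum decomposition of right $\mathcal{H}(B)$-comodules
\begin{equation*}
B \otimes \mathcal{H}(B) = \Upsilon(B^e) \oplus K.
\end{equation*}

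I would then prove $K = 0$ by comparing coinvariants on both sides. Since $\mathcal{H}(B)$ is connected by Proposition \ref{primitivegen}, a direct computation yields $(B \otimes \mathcal{H}(B))^{co\mathcal{H}(B)} = B \otimes 1$. On the other hand, $(B^e)^{co\mathcal{H}(B)} = B$ by Corollary \ref{freeness}(2), and since $\Upsilon$ is an injective comodule map sending $B$ isomorphically onto $B \otimes 1$, one deduces $\Upsilon(B^e)^{co\mathcal{H}(B)} = \Upsilon(B) = B \otimes 1$. The decomposition above then forces $K^{co\mathcal{H}(B)} = 0$. Now any $\mathcal{H}(B)$-comodule is locally finite by the fundamental theorem of comodules, and any nonzero finite-dimensional comodule over a connected Hopf algebra has nonzero socle; since the only simple right $\mathcal{H}(B)$-comodule is the trivial one, that socle is entirely contained in the coinvariants. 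Hence any nonzero $\mathcal{H}(B)$-comodule has nonzero coinvariants, forcing $K = 0$ and giving surjectivity of $\Upsilon$.

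The main step requiring careful justification is the existence of the $\mathcal{H}(B)$-colinear retraction $\sigma$, which rests essentially on the injectivity of $B^e$ as an $\mathcal{H}(B)$-comodule from Corollary \ref{freeness}(3); the pointedness hypothesis on $B$ enters through the chain of prior results culminating in that corollary. Once this splitting is in hand, the remainder of the argument is a clean coinvariants comparison driven by the connectedness of $\mathcal{H}(B)$.
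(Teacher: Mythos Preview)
Your argument is correct, and it is genuinely different from the paper's. The paper proves surjectivity by an explicit double induction: outer induction on the length $\ell$ of the monomial $\theta(a_\ell)\cdots\theta(a_1)$, inner induction on the coradical-filtration degree of $a_\ell$. At each step it uses the Taft--Wilson form of $\Delta(a_\ell)$ in a pointed coalgebra to write down an explicit preimage under $\Upsilon$. Your approach is purely categorical: you use injectivity of $B^e$ as an $\mathcal{H}(B)$-comodule (Corollary~\ref{freeness}(3)) to split the monomorphism $\Upsilon$, and then kill the complementary summand $K$ by the socle/coinvariants comparison, using that $\mathcal{H}(B)$ is connected (Proposition~\ref{primitivegen}). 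Each step is sound; in particular, in any abelian category a monomorphism out of an injective object splits, and over a connected coalgebra a nonzero comodule has nonzero coinvariants.

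One point worth flagging: your closing sentence is inaccurate. You say that pointedness of $B$ ``enters through the chain of prior results culminating in that corollary,'' but in fact every ingredient you invoke --- Corollary~\ref{freeness}, Proposition~\ref{primitivegen}, Proposition~\ref{comodulemap}, Proposition~\ref{injectivity} --- is stated and proved in the paper for an \emph{arbitrary} Poisson Hopf algebra $B$. So your argument never uses pointedness at all, and you have actually proved the theorem without that hypothesis. This is strictly stronger than what the paper obtains; indeed the paper explicitly remarks, just before the theorem, that the authors were ``only able to show the surjectivity of $\Upsilon$ when the Poisson Hopf algebra $B$ is pointed.'' What the paper's approach buys is constructiveness: it produces explicit preimages, which feeds directly into the parallel argument for the Galois property in the next subsection. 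What your approach buys is brevity and the removal of the pointedness assumption.
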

\begin{proof}
Thanks to Proposition \ref{injectivity}, all we have to show is that $\Upsilon$ is surjective. As shown in the proof of Proposition \ref{primitivegen}, $\mathcal{H}(B)$ is generated as an algebra by elements $\theta(a)$ where $a\in B$. Since $\Upsilon$ is a $B$-module map, it suffices to show that for any positive integer $\ell$ and $a_1, \cdots, a_\ell\in B$, the element
\begin{equation}\label{generic}
1\otimes \theta(a_\ell)\cdots \theta(a_1)
\end{equation}
is in the image of $\Upsilon$. In fact, we will show the element $(\ref{generic})$ is in $\Upsilon(I)$, where $I$ is the left ideal of $B^e$ generated by elements $h_b$ where $b\in B$. We proceed by induction on $\ell$.

Suppose that $\ell=1$. Recall that $\{B_i\}_{i\ge 0}$ is the coradical filtration of $B$. Then $a_1\in B_n$ for some $n$. If $n=0$, without loss of generality, we may assume $a_1=g$ where $g$ is a group-like element. Hence by the definition of $\Upsilon$,  one has $1\otimes \theta(a_1)=\Upsilon(g^{-1}h_{a_1})$. Now assume that $n\ge 1$. By \cite[Theorem 5.4.1]{Mo}, we may assume $a_1$ is such that 
\begin{equation*}
\Delta(a_1)=g\otimes a_1+ a_1\otimes x+ \sum_{i}b_i\otimes c_i, 
\end{equation*}
where $g, x$ are group-like elements and $b_i, c_i\in B_{n-1}$. By the induction on $n$, there exists $z\in I$ such that $\Upsilon(z)=a_1\otimes \theta(x)+\sum_ib_i\otimes \theta(c_i)$. Therefore a direct calculation shows that
\begin{equation*}
1\otimes \theta(a_1)= \Upsilon(g^{-1}h_{a_1}-g^{-1}z).
\end{equation*}
Clearly, the element $g^{-1}h_{a_1}-g^{-1}z\in I$.

Now suppose that $\ell\ge 2$. By the induction hypothesis on $\ell$, there exists $y\in I$ such that $\Upsilon(y)=1\otimes \theta(a_{\ell-1})\cdots \theta(a_1)$. Again, $a_\ell\in B_n$ for some $n$. If $n=0$, without loss of generality, we may assume $a_\ell=g$ where $g$ is a group-like element. By construction, $\Upsilon$ is a $B^e$-module map on $I$. Hence
\begin{equation*}
\Upsilon(h_{a_\ell}y)= h_{a_\ell}\cdot\Upsilon(y)=g\otimes  \theta(a_\ell)\cdots \theta(a_1).
\end{equation*}
Therefore,
\begin{equation*}
1\otimes  \theta(a_\ell)\cdots \theta(a_1)=\Upsilon (g^{-1}h_gy).
\end{equation*}
Now assume that $n\ge 1$. Without loss of generality, we may assume $a_\ell$ is such that 
\begin{equation*}
\Delta(a_\ell)=g\otimes a_\ell+ a_\ell\otimes x+ \sum_{i}b_i\otimes c_i, 
\end{equation*}
where $g, x$ are group-like elements and $b_i, c_i\in B_{n-1}$. Now
\begin{equation*}
\Upsilon(h_{a_\ell}y)= h_{a_\ell}\cdot\Upsilon(y)=g\otimes  \theta(a_\ell)\cdots \theta(a_1)+ a_\ell\otimes \theta(x) \theta(a_{\ell-1})\cdots \theta(a_1)+ \sum_i b_i\otimes \theta(c_i)  \theta(a_{\ell-1})\cdots \theta(a_1).
\end{equation*}
 By the induction on $n$, there exists $w\in I$ such that $\Upsilon(w)=a_\ell\otimes \theta(x) \theta(a_{\ell-1})\cdots \theta(a_1)+ \sum_i b_i\otimes \theta(c_i)  \theta(a_{\ell-1})\cdots \theta(a_1)$. Therefore,
 \begin{equation*}
 1\otimes  \theta(a_\ell)\cdots \theta(a_1)=\Upsilon (g^{-1}h_{a_\ell}y-g^{-1}w).
 \end{equation*}
 Obviously the element $g^{-1}h_{a_\ell}y-g^{-1}w\in I$. This completes the proof.
\end{proof}

\subsection{Galois extension} In the previous section, we have shown that for a pointed Poisson Hopf algebra $B$, the $\mathcal{H}(B)$-extension $B\subset B^e$ has the normal basis property. In fact, the extension enjoys another nice property - it is a (right) $\mathcal{H}(B)$-Galois extension. For the definition of a Hopf-Galois extension, one can refer to \cite[Definition 8.1.1]{Mo}. It amounts to show, in our setting, that the map $\beta: B^e\otimes_BB^e\rightarrow B^e\otimes \mathcal{H}(B)$, given by $x\otimes y\mapsto (x\otimes 1)\lambda(y)$, is bijective.

\begin{proposition}
Let $B$ be a pointed Poisson Hopf algebra. Then the $\mathcal{H}(B)$-extension $B\subset B^e$ is Galois.
\end{proposition}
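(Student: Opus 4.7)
The plan is to deduce the Galois property from the stronger \emph{cleft} property, and then invoke the classical theorem (e.g.\ \cite[Theorem 8.2.4]{Mo}) that every cleft $\mathcal{H}(B)$-extension is $\mathcal{H}(B)$-Galois. Since Theorem \ref{normalbasis} already supplies the normal basis, the task reduces to producing a convolution-invertible right $\mathcal{H}(B)$-colinear section $\gamma: \mathcal{H}(B) \to B^e$ of the canonical projection $\pi: B^e \to \mathcal{H}(B)$ satisfying $\gamma(1) = 1$.

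First I would define $\gamma(\bar h) := \Upsilon^{-1}(1 \otimes \bar h)$ using the normal basis isomorphism from Theorem \ref{normalbasis}. Since $\Upsilon$ is left $B$-linear with $\Upsilon(1) = 1 \otimes 1$, we get $\gamma(1) = 1$ immediately. Right $\mathcal{H}(B)$-colinearity of $\gamma$ is obtained by transporting the relation $(\Upsilon \otimes \mathrm{id})\lambda = \rho \Upsilon$ from Proposition \ref{comodulemap} through $\Upsilon^{-1}$, which yields $\lambda(\gamma(\bar h)) = \gamma(\bar h_1) \otimes \bar h_2$. To check that $\gamma$ is a genuine section, i.e.\ $\pi \gamma = \mathrm{id}_{\mathcal{H}(B)}$, I would observe that both $\pi$ and the $k$-linear map $(\epsilon_B \otimes \mathrm{id}_{\mathcal{H}(B)}) \circ \Upsilon$ are right $\mathcal{H}(B)$-colinear maps $B^e \to \mathcal{H}(B)$ whose restrictions to $B = (B^e)^{co\mathcal{H}(B)}$ both equal $\epsilon_B$. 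By the fundamental theorem of Hopf modules applied to the normal basis decomposition, colinear maps $B^e \to \mathcal{H}(B)$ are determined by their restriction to the coinvariants, so the two maps coincide and $\pi \gamma(\bar h) = (\epsilon_B \otimes \mathrm{id})(1 \otimes \bar h) = \bar h$.

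For convolution invertibility I would use that $\mathcal{H}(B)$ is connected (Proposition \ref{primitivegen}): since $\gamma(1) = 1$ is a unit in $B^e$, the standard inductive argument along the coradical filtration of $\mathcal{H}(B)$ produces a convolution inverse $\gamma^{-1}$. This exhibits $B \subset B^e$ as a cleft $\mathcal{H}(B)$-extension, and the cited theorem then supplies the explicit inverse
$$\beta^{-1}(x \otimes \bar h) = x \gamma^{-1}(\bar h_1) \otimes_B \gamma(\bar h_2),$$
with bijectivity verified by the usual cleft calculation using the convolution identities for $\gamma$.

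The main obstacle I anticipate is the verification $\pi \gamma = \mathrm{id}$. Since $\Upsilon$ is only a left $B$-module and right $\mathcal{H}(B)$-comodule morphism, not an algebra map, one cannot test the identity on algebra generators of $B^e$; the argument must proceed structurally via the fundamental theorem of Hopf modules, which is the sharpest tool that remains once multiplicativity of $\Upsilon$ is unavailable. Once this identification is in hand, the rest of the proof is essentially formal Hopf-algebraic bookkeeping.
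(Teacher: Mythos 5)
Your proof is correct in substance, but it takes a genuinely different route from the paper's. The paper never mentions cleftness at this stage: it uses the fact that $B^e$ is an injective $\mathcal{H}(B)$-comodule (Corollary \ref{freeness}) together with Schauenburg's criterion \cite[Corollary 2.4.9]{Sch} to reduce the Galois property to surjectivity of the Galois map $\beta$, and then proves surjectivity by hand, by induction on the length of the monomials $\theta(a_1)\cdots\theta(a_\ell)$ and on the coradical filtration of $B$ (pointedness entering via \cite[Theorem 5.4.1]{Mo}), in close parallel to the proof of Theorem \ref{normalbasis}. You instead bootstrap off Theorem \ref{normalbasis} itself: $\gamma(\bar h):=\Upsilon^{-1}(1\otimes\bar h)$ is right $\mathcal{H}(B)$-colinear because $\Upsilon$ is a colinear bijection (Proposition \ref{comodulemap}), $\gamma(1)=1$ since $\Upsilon$ restricts to $b\mapsto b\otimes 1$ on $B$, and connectedness of $\mathcal{H}(B)$ (Proposition \ref{primitivegen}) makes any such map convolution invertible by the standard coradical-filtration argument (\cite[Lemma 5.2.10]{Mo}); hence the extension is cleft, and \cite[Theorem 8.2.4]{Mo} gives Galois. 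What your route buys is economy: it avoids repeating the filtration induction and it produces cleftness (hence Proposition \ref{crossproduct}) directly, whereas the paper only recovers cleftness afterwards by combining Galois with the normal basis property. What the paper's route buys is independence: its surjectivity argument does not rely on Theorem \ref{normalbasis}, only resembles it.

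One step of your write-up should be deleted rather than repaired. Cleftness in the sense of \cite[Definition 7.2.1(2)]{Mo} asks only for a convolution-invertible right $\mathcal{H}(B)$-colinear map $\gamma:\mathcal{H}(B)\to B^e$; it does not ask $\gamma$ to be a section of the projection $\pi$, so the verification $\pi\gamma=\mathrm{id}$ is superfluous. Moreover, the justification you sketch for it is not valid: a right $\mathcal{H}(B)$-colinear map $f:B^e\to\mathcal{H}(B)$ satisfies $f=(\epsilon_{\mathcal{H}(B)}f\otimes \mathrm{id})\circ\lambda$, so it is determined by the functional $\epsilon_{\mathcal{H}(B)}\circ f$ on all of $B^e$, not by its restriction to the coinvariants $B$; two colinear (even left $B$-linear) maps that agree on $B$ need not coincide, and the fundamental theorem of Hopf modules does not give such a rigidity statement. (The identity $\pi\gamma=\mathrm{id}$ is in fact true, because $(\epsilon_B\otimes\mathrm{id})\Upsilon=\pi$, which one can check from Lemma \ref{LU} using $\epsilon(\{a,b\})=0$ and left $B$-linearity, but you neither need it nor establish it by the argument given.) With that paragraph removed, your proof is complete.
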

\begin{proof} By Proposition \ref{freeness}, $B^e$ is an injective comodule over $\mathcal{H}(B)$. Hence, by \cite[Corollary 2.4.9]{Sch}, we only have to show the Galois map $\beta$ is surjective. Clearly, $\beta$ is a left $B^e$-module map, where the $B^e$-module structure on $B^e\otimes_BB^e$ is given by $z(x\otimes y):= zx\otimes y$.
Therefore, it suffices to show that for any positive integer $\ell$ and $a_1, \cdots, a_\ell\in B$, the element
\begin{equation*}
1\otimes \theta(a_1)\cdots \theta(a_\ell)
\end{equation*}
is in the image of $\beta$. The proof is very similar to that of Theorem \ref{normalbasis}. We proceed by induction on $\ell$.

Suppose that $\ell=1$. Recall that $\{B_i\}_{i\ge 0}$ is the coradical filtration of $B$. Then $a_1\in B_n$ for some $n$. If $n=0$, without loss of generality, we may assume $a_1=g$ where $g$ is a group-like element.  
A direct calculation shows that
\begin{equation*}
1\otimes \theta(a_1)= \beta(g^{-1}\otimes h_g-g^{-1}h_g\otimes 1).
\end{equation*}
Now assume that $n\ge 1$. By \cite[Theorem 5.4.1]{Mo}, we may assume $a_1$ is such that 
\begin{equation*}
\Delta(a_1)=g\otimes a_1+ a_1\otimes x+ \sum_{i}b_i\otimes c_i, 
\end{equation*}
where $g, x$ are group-like elements and $b_i, c_i\in B_{n-1}$. 
So we have
$$\lambda(h_{a_1})=g\otimes \theta(a_1)+\e(a_1)h_g\otimes 1+\sum_i(b_i\otimes \theta(c_i)+\e(c_i)h_{b_i}\otimes 1):=g\otimes \theta(a_1)+ w.$$
By the induction on $n$, there exists $z\in B^e\otimes_BB^e$ such that $\beta(z)=w$. Therefore,
\begin{equation*}
1\otimes \theta(a_1)= \beta(g^{-1}h-g^{-1}z).
\end{equation*}
Clearly, the element $g^{-1}h_{a_1}-g^{-1}z\in I$.

Now suppose that $\ell\ge 2$. By the induction hypothesis on $\ell$, there exists $y\in B^e\otimes_BB^e$ such that $\beta(y)=1\otimes \theta(a_{1})\cdots \theta(a_{\ell-1})$. Again, $a_\ell\in B_n$ for some $n$. If $n=0$, without loss of generality, we may assume $a_\ell=g$ where $g$ is a group-like element. 
Now
\begin{equation*}
\beta(yh_{a_\ell})=\beta(y)\lambda(h_{a_\ell})=g\otimes  \theta(a_1)\cdots \theta(a_\ell)+ h_{a_\ell}\otimes \theta(a_1)\cdots \theta(a_{\ell-1}).
\end{equation*}
Therefore,
\begin{equation*}
1\otimes  \theta(a_1)\cdots \theta(a_\ell)=\beta (g^{-1}yh_g-g^{-1}h_gy).
\end{equation*}
Now assume that $n\ge 1$. Without loss of generality, we may assume that $a_\ell$ is such that 
\begin{equation*}
\Delta(a_\ell)=g\otimes a_\ell+ a_\ell\otimes x+ \sum_{i}b_i\otimes c_i, 
\end{equation*}
where $g, x$ are group-like elements and $b_i, c_i\in B_{n-1}$. Now
\begin{align*}
\beta(yh_{a_\ell})=&\beta(y)\lambda(h_{a_\ell})\\
=&g\otimes\theta(a_1)\cdots \theta(a_\ell)+  \e(a_\ell)h_g\otimes \theta(a_{1})\cdots \theta(a_{\ell-1}) \\
&+\sum_i(b_i\otimes \theta(a_{1})\cdots \theta(a_{\ell-1})\theta(c_i)+\e(c_i)h_{b_i}\otimes \theta(a_{1})\cdots \theta(a_{\ell-1}))\\
:= &g\otimes\theta(a_1)\cdots \theta(a_\ell)+w'.
\end{align*}
 By the induction on $n$, there exists $z'\in  B^e\otimes_BB^e$ such that $\beta(z')=w'$. Therefore,
 \begin{equation*}
 1\otimes  \theta(a_\ell)\cdots \theta(a_1)=\beta(g^{-1}yh_{a_\ell}-g^{-1}z').
 \end{equation*}
 This completes the proof.
\end{proof}

By the previous results, the structure of $B^e$, where $B$ is a pointed Poisson Hopf algebra, is rather clear. In fact, since the $\mathcal{H}(B)$-extension $B\subset B^e$ is Galois and has the normal basis property, it follows from \cite[Theorem 8.2.4]{Mo} that the extension is $\mathcal{H}(B)$-cleft in the sense of \cite[Definition 7.2.1(2)]{Mo}. Now, by \cite[Theorem 7.2.2]{Mo}, the
algebra $B^e$ can be recovered as the {\it crossed product} of $B$ with $\mathcal{H}(B)$.
\begin{proposition}\label{crossproduct}
Let $B$ be a pointed Poisson Hopf algebra. Then 
$$B^e\cong B\#_\sigma \mathcal{H}(B).$$
\end{proposition}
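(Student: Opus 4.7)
The strategy is to invoke two standard structure theorems from Hopf algebra theory applied directly to the extension-theoretic data already assembled. First, I would observe that by Theorem \ref{normalbasis} the $\mathcal{H}(B)$-extension $B \subset B^e$ has the normal basis property, while by the immediately preceding proposition the same extension is right $\mathcal{H}(B)$-Galois. Montgomery's \cite[Theorem 8.2.4]{Mo} asserts that for any Hopf algebra $H$, an $H$-extension is $H$-cleft if and only if it is simultaneously $H$-Galois and possesses the normal basis property. Applying this yields that $B \subset B^e$ is $\mathcal{H}(B)$-cleft, meaning there exists a convolution-invertible right $\mathcal{H}(B)$-colinear map $\gamma \colon \mathcal{H}(B) \to B^e$.

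Next, I would invoke \cite[Theorem 7.2.2]{Mo}, which establishes that a right $H$-comodule algebra $A$ with coinvariants $B = A^{coH}$ is $H$-cleft if and only if $A \cong B \#_\sigma H$ as left $B$-modules, right $H$-comodule algebras, via a crossed product structure. In our setting, the measure $\mathcal{H}(B) \otimes B \to B$ and the normalized 2-cocycle $\sigma \colon \mathcal{H}(B) \otimes \mathcal{H}(B) \to B$ are both recovered from the cleaving map $\gamma$ by the standard formulas (the action is $h \cdot b = \gamma(h_1)\, b\, \gamma^{-1}(h_2)$, and $\sigma(h,k) = \gamma(h_1)\gamma(k_1)\gamma^{-1}(h_2 k_2)$). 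Combining the two theorems gives the desired isomorphism $B^e \cong B \#_\sigma \mathcal{H}(B)$.

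The point to stress is that at this stage there is essentially no obstacle. The genuine substance of the proposition resides in the two hypotheses being satisfied: the normal basis property (whose verification required the delicate combinatorial analysis of partitions in Lemma \ref{LU} and Proposition \ref{comodulemap}, together with the inductive surjectivity argument in Theorem \ref{normalbasis}) and the Galois property (established through a parallel inductive argument for the surjectivity of the canonical map $\beta$). Once both are in hand, the passage from a Galois extension with normal basis property to an explicit crossed product decomposition is purely formal machinery, so the proof of Proposition \ref{crossproduct} itself reduces to citing \cite[Theorem 8.2.4]{Mo} and \cite[Theorem 7.2.2]{Mo} in succession.
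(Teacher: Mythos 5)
Your argument is exactly the paper's: the authors also combine the Galois property and the normal basis property (Theorem \ref{normalbasis}) via \cite[Theorem 8.2.4]{Mo} to conclude that $B\subset B^e$ is $\mathcal{H}(B)$-cleft, and then invoke \cite[Theorem 7.2.2]{Mo} to obtain the crossed product decomposition. Your proposal is correct and essentially identical in approach, including the observation that the real work lies in the preceding results.
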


\medskip
\section{The structure of $(\gr B)^e$}\label{graded}

In the previous section, we showed that for a pointed Poisson Hopf algebra $B$, $B^e$ is isomorphic to $B\#_\sigma \mathcal{H}(B)$. However, the existence of the cocycle $\sigma$ still makes the structure of $B\#_\sigma \mathcal{H}(B)$ very complicated. 

In this section, we focus on a particular type of pointed Poisson Hopf algebras $B$ such that $\{g, t\}=0$ for any $g, t\in G(B)$. We will show that $C=\gr B$ carries an induced Poisson structure and $C^e$ has a nice decomposition (Proposition \ref{Decomp}).
\begin{lemma}
Let $B$ be a Pointed Poisson Hopf algebra and denote its coradical filtration by $\{B_n\}_{n\ge 0}$. If $\{g, t\}=0$ for any $g, t\in G(B)$, then 
$$\{B_i, B_j\}\subset B_{i+j},$$
for any $i, j\ge 0$.
\end{lemma}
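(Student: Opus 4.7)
The plan is to argue by induction on $n = i + j$, using the Poisson bialgebra axiom together with the wedge characterization $B_n = B_{n-1} \wedge B_0$ of the coradical filtration of the pointed coalgebra $B$.

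For the base case $n = 0$, pointedness gives $a, b \in B_0 = kG(B)$, and extending $\{g, t\} = 0$ bilinearly yields $\{B_0, B_0\} = 0 \subset B_0$. For the inductive step, fix $a \in B_i$ and $b \in B_j$ with $i + j = n \geq 1$, assume the conclusion for all pairs of strictly smaller total degree, and apply the Poisson bialgebra axiom together with the bracket formula of Proposition \ref{tensor} to obtain
\begin{equation*}
\Delta(\{a, b\}) = a_1 b_1 \otimes \{a_2, b_2\} + \{a_1, b_1\} \otimes a_2 b_2.
\end{equation*}
Since $\Delta(a) \in \sum_{p=0}^{i} B_p \otimes B_{i-p}$ and similarly for $b$ by \cite[Theorem 5.2.2]{Mo}, it suffices to show that each Sweedler summand, indexed by a bidegree $(p, q)$ with $0 \le p \le i$ and $0 \le q \le j$, lies in $B_{n-1} \otimes B + B \otimes B_0$; the conclusion $\{a, b\} \in B_{n-1} \wedge B_0 = B_n$ will then be immediate.

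For a summand of the first type $a_1 b_1 \otimes \{a_2, b_2\}$ with $a_1 \in B_p$, $b_1 \in B_q$, $a_2 \in B_{i-p}$, $b_2 \in B_{j-q}$, three subcases arise: if $(p, q) = (i, j)$, then $a_2, b_2 \in B_0$ and the base-case hypothesis forces the inner bracket to vanish; if $0 < p + q < n$, the induction hypothesis applied to $(i-p, j-q)$ together with the multiplicativity $B_p B_q \subset B_{p+q}$ of the coradical filtration \cite[p.~62]{Mo} places the summand in $B_{p+q} \otimes B_{n-p-q} \subset B_{n-1} \otimes B$; finally if $p = q = 0$, then $a_1 b_1 \in B_0 \subset B_{n-1}$ (here $n \geq 1$), so the summand again lies in $B_{n-1} \otimes B$. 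A completely symmetric analysis handles the second-type summands $\{a_1, b_1\} \otimes a_2 b_2$: the extremal case $(p, q) = (0, 0)$ vanishes by the hypothesis on group-likes, the intermediate cases $0 < p + q < n$ fall into $B_{n-1} \otimes B$ by the induction hypothesis for $\{a_1, b_1\}$, and the case $(p, q) = (i, j)$ leaves a residual term in $B \otimes B_0$.

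The only step of any substance is the treatment of the two extremal bidegrees $(0, 0)$ and $(i, j)$, which is precisely where the hypothesis $\{g, t\} = 0$ on group-likes enters the proof: without it the Leibniz rule would close the induction only up to a strictly higher filtration degree. Everything else is a direct unfolding of the Poisson bialgebra compatibility and the multiplicativity of the coradical filtration, so I expect the main obstacle to be purely clerical, namely keeping the bidegree bookkeeping of the Sweedler components clean when assembling the two types of summands.
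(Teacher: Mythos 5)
Your proof is correct and is exactly the induction the paper has in mind: the paper's own proof is just the remark ``Easy by induction,'' and your argument supplies the intended details (induction on $i+j$ via $\Delta(\{a,b\})=\{\Delta(a),\Delta(b)\}$, the multiplicativity of the coradical filtration, and $B_n=B_{n-1}\wedge B_0$, with the hypothesis on group-likes killing the extremal bidegrees). No gaps; the bidegree bookkeeping and the case split are exhaustive and accurate.
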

\begin{proof}
Easy by induction.
\end{proof}

In the previous lemma, the assumption that $\{g ,t\}=0$ for any $g, t\in G(B)$ is essential. As shown in Example (\ref{group}), it might be the case that $\{g, t\}\neq 0$ for some $g, t\in G(B)$.

\begin{proposition}\label{gradedPoisson}
Let $B$ be a Pointed Poisson Hopf algebra such that $\{g, t\}=0$ for any $g, t\in G(B)$. Then $C=\gr B$ inherits a Poisson Hopf algebra structure from $B$ with $C(n)=B_n/B_{n-1}$. Moreover, $C$ is a graded Poisson Hopf algebra in the sense that it is a graded Hopf algebra and 
$$\{C(i), C(j)\}\subset C(i+j).$$
\end{proposition}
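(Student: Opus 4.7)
The plan is to use the previous lemma, $\{B_i, B_j\} \subset B_{i+j}$, to descend the Poisson bracket from $B$ to the associated graded space $C = \gr B$, and then check that all the required axioms (Poisson axioms plus compatibility with the Hopf structure) pass to the quotient.

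First I would recall that since $B$ is a pointed Hopf algebra, the coradical filtration $\{B_n\}$ is a Hopf algebra filtration (cited from \cite[p.~62]{Mo} in Section~\ref{pre}), so $C = \gr B$ is automatically a graded Hopf algebra with $C(n) = B_n / B_{n-1}$. It therefore remains to produce a graded Poisson bracket on $C$ compatible with this Hopf structure.

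Next, for $\bar{a} \in C(i)$ and $\bar{b} \in C(j)$ with representatives $a \in B_i$ and $b \in B_j$, define
\begin{equation*}
\{\bar{a}, \bar{b}\} := \overline{\{a, b\}} \in B_{i+j}/B_{i+j-1} = C(i+j).
\end{equation*}
The previous lemma gives $\{a, b\} \in B_{i+j}$, so the right-hand side makes sense; and if, say, $a \in B_{i-1}$ then $\{a, b\} \in B_{i+j-1}$ by the same lemma, showing that the bracket is well defined on cosets. By construction it satisfies $\{C(i), C(j)\} \subset C(i+j)$, which is the required gradedness.

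Then I would verify the Poisson algebra axioms for $(C, \{\,,\,\})$: bilinearity, antisymmetry, the Jacobi identity, and the Leibniz rule $\{\bar{a}, \bar{b}\bar{c}\} = \{\bar{a}, \bar{b}\}\bar{c} + \bar{b}\{\bar{a}, \bar{c}\}$. Each identity holds in $B$ by hypothesis, and passing to the associated graded simply replaces each term by its class in the appropriate component $C(n)$; the identities on $C$ are the images of the identities on $B$ under the quotient maps $B_n \twoheadrightarrow C(n)$, so they are automatic. Finally, the Poisson--Hopf compatibility $\Delta(\{\bar{a},\bar{b}\}) = \{\Delta(\bar{a}), \Delta(\bar{b})\}$ follows by applying $\gr \Delta$ to the identity $\Delta(\{a,b\}) = \{\Delta(a), \Delta(b)\}$ in $B \otimes B$, after noting that $\gr(B \otimes B) = C \otimes C$ with the tensor grading and that the induced Poisson structure on $C \otimes C$ (as in Proposition~\ref{tensor}) is precisely the graded one coming from the filtration. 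This last identification is the only mildly delicate point, but it is routine since the filtration on $B \otimes B$ is the tensor-product filtration, whose associated graded is $C \otimes C$ with bracket computed componentwise via the formula in Proposition~\ref{tensor}.
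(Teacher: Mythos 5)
Your proposal is correct and follows essentially the same route as the paper: the paper also defines $\{\overline{x},\overline{y}\}:=\overline{\{x,y\}}$ on representatives using the lemma $\{B_i,B_j\}\subset B_{i+j}$, notes that gradedness is immediate, and leaves the remaining verifications (well-definedness, Poisson axioms, compatibility of $\Delta$ with the bracket via the tensor filtration) as routine, which is exactly what you spell out.
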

\begin{proof}
It is well known that $C$ is a graded Hopf algebra. For any $x\in B_i$ and $y\in B_j$, let $\overline{x}$ and $\overline{y}$ be the corresponding elements in $C(i)$ and $C(j)$, respectively. Then the Poisson bracket on $C$ is defined by 
$$\{\overline{x}, \overline{y}\}=\overline{\{x, y\}},$$
where $\overline{\{x, y\}}$ is the element in $C(i+j)$ represented by $\{x, y\}\in B_{i+j}$. It is clear that $\{C(i), C(j)\}\subset C(i+j)$. It is routine to check that $C$ is a Poisson Hopf algebra with the Poisson bracket defined above.
\end{proof}

\subsection{The structure of $C^e$} Throughout this subsection, let $B$ be a Pointed Poisson Hopf algebra such that $\{g, t\}=0$ for any $g, t\in G(B)$. We simply denote $G(B)$ by $G$ and let $C=\gr B$. There is a canonical projection $\pi : C\rightarrow kG$ such that $\pi\iota=id$ where $\iota$ is the inclusion $kG\rightarrow C$. So by a well-know result of Radford \cite{Ra3},
\begin{equation}\label{bi}
C\cong R\#kG
\end{equation}
as Hopf algebras, where $R=C^{co \pi}=\{x\in C: (id\otimes\pi)\Delta(x)=x\otimes 1\}$. 

The algebra $R\#kG$ is called the {\it biproduct} of $R$ and $kG$.
In fact, every element in $C$ can be written as a linear combination of elements of the form $xg$ where $x\in R$ and $g\in G$ and the isomorphism sends $xg$ to $x\#g$. Moreover, $R$ is a braided graded Hopf algebra in $^G_G\mathcal{M}$, the Yetter-Drinfeld category over $G$. The left $G$-action on $R$ is given by the conjugation (which is trivial in this case since $C$ is commutative) and the left $G$-coaction is given by $(\pi\otimes id)\Delta$. 

\begin{lemma}\label{Poissonmap}
Retain the above notation. Then the map $\iota$ and $\pi$ are Poisson maps.
\end{lemma}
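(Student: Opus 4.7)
The plan is to leverage the grading on $C = \gr B$ together with Proposition \ref{gradedPoisson}, which says that the Poisson bracket respects the grading: $\{C(i), C(j)\} \subset C(i+j)$. The key auxiliary observation is that under the Radford biproduct decomposition of the coradically graded Hopf algebra $C$, the projection $\pi: C \to kG$ coincides with the projection onto the degree-zero summand $C(0) = kG$, sending every $C(n)$ with $n \ge 1$ to zero. This identification follows because $C$ is coradically graded with $C_0 = C(0) = kG$, and any Hopf algebra map $C \to kG$ splitting $\iota$ must be trivial on higher coradical layers.

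The Poisson map property for $\iota$ is immediate. By hypothesis, $\{g, t\}_B = 0$ for all $g, t \in G$, so the subalgebra $kG \subset B$ carries the trivial Poisson bracket, as does the image $\iota(kG) = C(0) \subset C$ (for $g, t \in G$, $\{g, t\}_C = \overline{\{g, t\}_B} = 0$). Hence $\iota(\{g,t\}_{kG}) = 0 = \{\iota(g), \iota(t)\}_C$ for all $g,t \in G$, and bilinearity extends this to all of $kG$.

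For $\pi$, I would work on homogeneous elements $x \in C(i)$, $y \in C(j)$ and split into cases. If $i = j = 0$, then $x, y \in kG$ and both $\pi\{x, y\}_C$ and $\{\pi(x), \pi(y)\}_{kG}$ vanish by the triviality of the bracket on $kG$. If $i + j \ge 1$, Proposition \ref{gradedPoisson} gives $\{x, y\}_C \in C(i+j)$ with $i + j \ge 1$, so $\pi\{x, y\}_C = 0$; on the other hand at least one of $i, j$ is positive, so at least one of $\pi(x), \pi(y)$ is zero, and $\{\pi(x), \pi(y)\}_{kG} = 0$ as well (indeed the bracket on $kG$ is identically zero). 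Extending by bilinearity yields the identity on all of $C$.

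I do not anticipate any real obstacle: once $\pi$ is identified with the graded projection, the verification is purely a matter of bookkeeping with degrees. The only conceptual ingredient is the identification of the Radford projection with the degree-zero projection, which is a standard consequence of coradical gradedness.
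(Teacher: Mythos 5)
Your proof is correct and follows essentially the same route as the paper: the paper notes that $\ker\pi=\bigoplus_{i\ge 1}C(i)$ and that Proposition \ref{gradedPoisson} makes this a Poisson ideal, which is exactly the degree bookkeeping you carry out explicitly, together with the (trivial) verification for $\iota$ on group-likes. No gaps; your version just spells out the details the paper leaves implicit.
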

\begin{proof}
The map $\iota$ is clearly a Poisson map. Notice that by definition, $\ker \pi=\bigoplus_{i=1}^{\infty}C(i)$. Now it follows from Proposition \ref{gradedPoisson} that $\ker \pi$ is a Poisson ideal. Hence $\pi$ is also a Poisson map.
\end{proof}

Now we make an easy observation before moving on to the next lemma. Since $\ker \pi$ is a Poisson ideal of $C$, for any $u\in \ker \pi\otimes \ker \pi$ and $w\in C\otimes C$, by the definition of the Poisson bracket on $C\otimes C$, we have 
\begin{equation}\label{fact}
\{u, w\}\in \ker \pi\otimes \ker \pi.
\end{equation}

\begin{lemma}\label{inR}
Retain the above notation. Then the following are true.
\begin{enumerate}
\item[\textup{(1)}] The algebra $R$ is a Poisson subalgebra of $C$, 

\item[\textup{(2)}] $\{g, \{y, g^{-1}\}\}\in R$ and $g^{-1}\{g, y\}\in R$ for any $y\in R$ and $g\in G$.
\end{enumerate}
\end{lemma}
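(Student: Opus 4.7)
The plan is to observe that the composite $\rho := (Id\otimes \pi)\Delta\colon C\to C\otimes kG$ is itself a Poisson algebra homomorphism, and to use this single fact to dispatch both claims. That $\rho$ is a Poisson algebra map follows from two ingredients already in hand: the Poisson bialgebra axiom makes $\Delta\colon C\to C\otimes C$ a Poisson algebra map, while Lemma~\ref{Poissonmap} together with the hypothesis $\{g,t\}=0$ on $G(B)$ (which descends to $G = G(C)$) says that $\pi\colon C\to kG$ is a Poisson map, with $kG$ carrying the trivial bracket. Hence $Id\otimes \pi\colon C\otimes C\to C\otimes kG$ is a Poisson algebra map for the tensor-product brackets of Proposition~\ref{tensor}.

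For part (1), since $kG$ carries the trivial Poisson bracket, the formula of Proposition~\ref{tensor} specialises on $C\otimes k\cdot 1$ to $\{a\otimes 1,\, c\otimes 1\} = \{a,c\}\otimes 1$, so $C\otimes k\cdot 1$ is a Poisson subalgebra of $C\otimes kG$. Because $R = \rho^{-1}(C\otimes k\cdot 1)$ by the definition of $R$ and $\rho$ is a Poisson algebra homomorphism, it follows at once that $R$ is a Poisson subalgebra of $C$.

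For part (2), I would apply $\rho$ directly to each expression and check that the output lies in $C\otimes k\cdot 1$. Using $\rho(g^{\pm 1}) = g^{\pm 1}\otimes g^{\pm 1}$, $\rho(y) = y\otimes 1$, and that $\rho$ is a Poisson algebra map, one computes
\[
\rho(g^{-1}\{g,y\}) = (g^{-1}\otimes g^{-1})\,\{g\otimes g,\, y\otimes 1\} = (g^{-1}\otimes g^{-1})(\{g,y\}\otimes g) = g^{-1}\{g,y\}\otimes 1,
\]
and the entirely analogous three-step computation for $\{g,\{y,g^{-1}\}\}$, in which the cross-term is killed by $\{g,g^{-1}\}=0$, yields $\rho(\{g,\{y,g^{-1}\}\}) = \{g,\{y,g^{-1}\}\}\otimes 1$.

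I do not foresee any genuine obstacle. The only step that requires a moment of care is verifying that $Id\otimes \pi$ is Poisson with respect to the specific bracket of Proposition~\ref{tensor}, but this is immediate once one observes that $\pi(C)\subset kG$ has trivial bracket, so every term in the tensor-product bracket formula either is preserved by $Id\otimes\pi$ or lands in a slot where $\pi$ annihilates the Poisson bracket.
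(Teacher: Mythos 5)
Your proof is correct, but it takes a genuinely more structural route than the paper's. The paper proves (1) by a hands-on computation: it takes $a,b\in R$ homogeneous of degree $\ge 1$ (hence in $\ker\pi$), writes $\Delta(a)=a\otimes 1+g\otimes a+u$ and $\Delta(b)=b\otimes 1+t\otimes b+v$ with $u,v\in\ker\pi\otimes\ker\pi$, expands $\{\Delta(a),\Delta(b)\}$ and applies $id\otimes\pi$, using the observation (\ref{fact}) that bracketing an element of $\ker\pi\otimes\ker\pi$ against anything stays in $\ker\pi\otimes\ker\pi$; part (2) is then dispatched as ``similar.'' You instead package the same inputs (Lemma \ref{Poissonmap} and the vanishing of the bracket on $kG$, which forces $\pi(\{b,d\})=0$ for all $b,d\in C$) into the single assertion that $\rho=(Id\otimes\pi)\Delta$ is a morphism of Poisson algebras for the brackets of Proposition \ref{tensor}, after which both parts reduce to one-line evaluations of $\rho$ on $g^{\pm1}$ and $y$; in particular part (2) is handled uniformly rather than by a second ad hoc expansion, and no case analysis on the coradical grading or on the shape of $\Delta$ of homogeneous elements is needed. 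What the paper's argument buys in exchange is that it stays entirely elementwise and never needs to verify that $Id\otimes\pi$ intertwines the tensor-product brackets. One small caveat in your write-up: $R$ is defined by the condition $\rho(x)=x\otimes 1$, not literally as $\rho^{-1}(C\otimes k\cdot 1)$; the two coincide because $(id\otimes\e)\rho=id$ (using $\e\circ\pi=\e$), or you can sidestep the identification altogether in part (1) by computing $\rho(\{a,b\})=\{\rho(a),\rho(b)\}=\{a\otimes 1,b\otimes 1\}=\{a,b\}\otimes 1$ directly for $a,b\in R$ — a one-line fix, not a gap.
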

\begin{proof}
For any $a, b\in R$, we have to show that $(id\otimes\pi)\Delta(\{a, b\})=\{a, b\}\otimes 1$. Without loss of generality, we may assume that $a, b$ are homogeneous of degree $\ge1$ and therefore $a, b\in \ker \pi$. Since $C$ is a pointed and graded coalgebra, we may further assume that $\Delta(a)$ and $\Delta(b)$ are of the forms
$\Delta(a)=a\otimes 1+g\otimes a+u$,
and 
$\Delta(b)=b\otimes 1+t\otimes b+v$, where $g, t$ are group-like elements and $u, v\in \ker \pi \otimes \ker \pi$.
Then a direct calculation shows that
\begin{align*}
\Delta(\{a, b\})=&\,\,\{\Delta(a), \Delta(b)\}\\
=&\,\,\{a, b\}\otimes 1+\{a, h\}\otimes b+ \{g, b\}\otimes a+gh\otimes \{a, b\}+\{u, \Delta(a)\}+\{a\otimes1+g\otimes a, v\}.
\end{align*}
By using $(\ref{fact})$ and the fact that $a, b$ are in the Poisson ideal $\ker\pi$, one sees that 
$$(id\otimes\pi)\Delta(\{a, b\})=\{a, b\}\otimes 1.$$
Part $(2)$ can be proved similarly.
\end{proof}

For any $g\in G$ and $y\in R$, we let $g*y=g^{-1}\{g,y\}$. Then the following lemma is clear. Recall that $\{g, t\}=0$ for any $g, t\in G$.
\begin{lemma}\label{star}
Retain the above notation. For any $g, t\in G$ and $y\in R$, the following statements are true.
\begin{enumerate}
\item[\textup{(1)}] $(gt)*y=g*y+t*y$,
\item[\textup{(2)}] $g*(t*y)= g^{-1}t^{-1}\{g, \{t, y\}\}$,
\item[\textup{(3)}] $g*(g*y)= \{g, \{y, g^{-1}\}\}$.
\end{enumerate}
\end{lemma}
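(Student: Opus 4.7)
\medskip

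The plan is to prove all three identities by direct expansion from the definition $g*y = g^{-1}\{g,y\}$, using only three ingredients: the Leibniz rule for the Poisson bracket, the commutativity of $C$ as an associative algebra, and the standing hypothesis $\{g,t\}=0$ for all $g,t\in G$. No deeper structure (such as the coalgebra decomposition or the biproduct) is needed, so the argument is essentially a bookkeeping computation. Before treating the three items, I would first record the auxiliary identity
\[
\{g,t^{-1}\}\;=\;0\qquad\text{for all }g,t\in G.
\]
This follows by applying the Leibniz rule to $0=\{g,1\}=\{g,t\cdot t^{-1}\}=\{g,t\}t^{-1}+t\{g,t^{-1}\}$ and using $\{g,t\}=0$ and that $t$ is invertible. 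In particular $\{g,g^{-1}\}=0$ (which also follows from antisymmetry), and analogous applications give $\{g,t^{-n}\}=0$ for all $n$.

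For part (1), I would compute
\[
(gt)*y \;=\; (gt)^{-1}\{gt,y\} \;=\; t^{-1}g^{-1}\bigl(\{g,y\}t + g\{t,y\}\bigr),
\]
where the second equality is the Leibniz rule in the second slot of $\{-,-\}$. Commutativity of $C$ lets me move $t$ past $g^{-1}$, giving $g^{-1}\{g,y\}+t^{-1}\{t,y\}=g*y+t*y$.

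For part (2), I would apply Leibniz to $\{g,t^{-1}\{t,y\}\}$ obtaining
\[
g*(t*y) \;=\; g^{-1}\bigl(t^{-1}\{g,\{t,y\}\} + \{g,t^{-1}\}\{t,y\}\bigr),
\]
and the second summand vanishes by the auxiliary identity, yielding $g^{-1}t^{-1}\{g,\{t,y\}\}$. For part (3), the same kind of expansion gives $g*(g*y)=g^{-2}\{g,\{g,y\}\}$, so the task reduces to showing $\{g,\{y,g^{-1}\}\}=g^{-2}\{g,\{g,y\}\}$. I would do this by first expressing $\{y,g^{-1}\}=-\{g^{-1},y\}$ in terms of $\{g,y\}$: from $0=\{g g^{-1},y\}=g\{g^{-1},y\}+g^{-1}\{g,y\}$ one gets $\{g^{-1},y\}=-g^{-2}\{g,y\}$, hence $\{y,g^{-1}\}=g^{-2}\{g,y\}$. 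Then a final Leibniz expansion of $\{g,g^{-2}\{g,y\}\}$, together with $\{g,g^{-2}\}=0$ (again from the auxiliary identity), produces the required formula.

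There is no real obstacle here; the only thing to be careful about is sign management and the order of factors when applying the Leibniz rule in each slot of the bracket, and to invoke commutativity of $C$ at the right moments. Since by Lemma~\ref{inR}(2) all the resulting expressions do lie in $R$, the identities hold as identities in $R$ as well as in $C$.
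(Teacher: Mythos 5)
Your proposal is correct and takes essentially the same route as the paper: parts (1) and (2) are exactly the direct Leibniz computations the paper leaves to the reader, and every step (including the auxiliary vanishing $\{g,t^{-1}\}=0$, which in fact already follows from the standing hypothesis since $t^{-1}\in G$) checks out. The only cosmetic difference is in (3): the paper deduces it from (1) and (2) via $g^{-1}*y=-g*y$ and antisymmetry of the bracket, whereas you re-expand directly using $\{y,g^{-1}\}=g^{-2}\{g,y\}$ and $\{g,g^{-2}\}=0$; the two computations are equivalent bookkeeping.
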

\begin{proof}
Part (1) and (2) can be easily verified by calculation. Notice that $(1)$ indicates that $g^{-1}*y=-g*y$. Hence
$$g*(g*y)=-g*(g^{-1}*y)=-\{g, \{g^{-1}, y\}\}=\{g, \{y, g^{-1}\}\}.$$
This completes the proof.
\end{proof}

By the functoriality of taking the enveloping algebra, the maps $\iota$ and $\pi$ induce maps $\iota^e: (kG)^e\rightarrow C^e$ and $\pi^e: C^e\rightarrow (kG)^e$ such that $\pi^e\iota^e=id$ (which implies that $\iota^e$ is injective). Consequently,
\begin{equation}
C^e\cong T\#(kG)^e,
\end{equation}
where $T$ is the right coinvariants of the map $\pi^e$. The inclusion $R\rightarrow C$ induces an algebra map $\Psi: R^e\rightarrow C^e$, which satisfies the following commutative diagram
\[
\begin{CD}
R@>\alpha>> R^e@<\beta<< R\\
@VVV   @V\Psi VV   @VVV\\
C@>m>> C^e@<h<< C\,,\\
\end{CD}
\]
where $\alpha$ and $\beta$ are structure maps of $R^e$.

\begin{proposition} Retain the above notation. Then $\Psi(R^e)\subset T$. Consequently, we can view $\Psi$ as a map from $R^e$ to $T$. 
\end{proposition}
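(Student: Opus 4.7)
\medskip
\noindent\textbf{Proof proposal.} The set $T = (C^e)^{co (kG)^e}$ is the right coinvariant subalgebra with respect to the coaction $\lambda := (\mathrm{id} \otimes \pi^e) \circ \Delta_{C^e}$. Since $\pi^e$ is a coalgebra map, $\lambda$ is an algebra map, so $T$ is a subalgebra of $C^e$. Because $R^e$ is generated as an algebra by the images $\alpha(R) = m_R$ and $\beta(R) = h_R$, it suffices to verify that $\Psi(m_r) = m_r$ and $\Psi(h_r) = h_r$ lie in $T$ for every $r \in R$ (here we identify $m_r \in C^e$ with $r$, as the structure map $m$ is injective). The overall plan is to unwind the coaction using Theorem~\ref{Hopfstr} and reduce the verification to the defining condition $r \in R = C^{co\pi}$.

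For $r \in R$, Theorem~\ref{Hopfstr} gives $\Delta_{C^e}(r) = \sum r_{(1)} \otimes r_{(2)}$, where the Sweedler notation refers to the comultiplication in $C$ (via the Hopf subalgebra inclusion $C \hookrightarrow C^e$). By functoriality, $\pi^e \circ m_C = m_{kG} \circ \pi$, so
\begin{equation*}
\lambda(r) = \sum r_{(1)} \otimes \pi^e(r_{(2)}) = \sum r_{(1)} \otimes \pi(r_{(2)}) = r \otimes 1,
\end{equation*}
the last equality using $r \in R$. Hence $m_r \in T$.

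For the generators $h_r$ with $r \in R$, Theorem~\ref{Hopfstr} gives $\Delta_{C^e}(h_r) = \sum r_{(1)} \otimes h_{r_{(2)}} + \sum h_{r_{(1)}} \otimes r_{(2)}$. Using $\pi^e(h_c) = h_{\pi(c)}$ (again by functoriality), applying $(\mathrm{id} \otimes \pi^e)$ yields
\begin{equation*}
\lambda(h_r) = \sum r_{(1)} \otimes h_{\pi(r_{(2)})} + \sum h_{r_{(1)}} \otimes \pi(r_{(2)}).
\end{equation*}
The second sum equals $h_r \otimes 1$ by the coinvariance of $r$. For the first sum, write $\pi(r_{(2)}) = \sum_{g \in G} \lambda_g(r_{(2)}) g$ for suitable linear functionals $\lambda_g$. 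The coinvariance of $r$ forces $\sum r_{(1)} \lambda_g(r_{(2)}) = 0$ for all $g \neq 1$ and $\sum r_{(1)} \lambda_1(r_{(2)}) = r$, so
\begin{equation*}
\sum r_{(1)} \otimes h_{\pi(r_{(2)})} = \sum_{g \in G} \left( \sum r_{(1)} \lambda_g(r_{(2)}) \right) \otimes h_g = r \otimes h_1 = 0,
\end{equation*}
since $h_1 = 0$ (this follows from $h_{1 \cdot 1} = 2 h_1$ by relation~(\ref{hxy})). Thus $\lambda(h_r) = h_r \otimes 1$, so $h_r \in T$, completing the proof.

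The only slightly delicate point — and the one that had to be verified carefully — is the vanishing of the first summand in the computation of $\lambda(h_r)$; the key input is the elementary identity $h_1 = 0$ together with the defining property of $R$. Everything else is a direct application of Theorem~\ref{Hopfstr} and the naturality of the enveloping algebra construction.
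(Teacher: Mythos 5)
Your proof is correct and follows essentially the same route as the paper: both verify the coinvariance condition $(\mathrm{id}\otimes\pi^e)\Delta(y)=y\otimes 1$ on the algebra generators $m_r$ and $h_r$ of $\Psi(R^e)$, using Theorem \ref{Hopfstr} together with the compatibilities $\pi^e\circ m=m\circ\pi$ and $\pi^e\circ h=h\circ\pi$. The only difference is cosmetic: the paper reduces to homogeneous $x$ of positive degree and writes out $\Delta(x)$ explicitly, whereas you kill the summand $\sum r_{(1)}\otimes h_{\pi(r_{(2)})}$ directly from the coinvariance of $r$ together with $h_1=0$, which is a slightly cleaner handling of the same computation.
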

\begin{proof}
Clearly, as an algebra, $\Psi(R^e)$ is generated by $\Psi(\alpha_x)=m_x$ and $\Psi(\beta_x)=h_x$ where $x\in R$.
Without loss of generality, we may assume that $x$ is homogeneous of degree $\ge 1$ and thus $x\in \ker \pi$.
Following our convention, we simply use $x$ for $m_x$. By definition,
$$T=\{y\in C^e: (id\otimes\pi^e)\Delta(y)=y\otimes 1\}.$$
By the definition of $\pi^e$, we have the following commutative diagram
\begin{equation}\label{commute}
\begin{CD}
C@>m>>C^e@<h<< C\\
@V\pi VV   @V\pi^e VV   @V\pi VV\\
kG@>m>> (kG)^e@<h<< kG\,,\\
\end{CD}
\end{equation}
where by a little abuse of notation, we still use $m, h$ for the structure maps of $(kG)^e$.
The left square of $(\ref{commute})$ says that $\pi^e$, when restricted to $C$, is equal to $\pi$. Also, we have seen that $C$ is a Hopf subalgebra of $C^e$. Hence
$$(id\otimes\pi^e)\Delta(x)=(id\otimes\pi)\Delta(x)=x\otimes 1.$$
On the other hand, the right square of $(\ref{commute})$ indicates that $\pi^e(h_a)=h\pi(a)=0$ for any $a\in \ker\pi$. As in the proof of Lemma $\ref{inR}$, we may assume that $\Delta(x)=x\otimes 1+g\otimes x+\sum_is_i\otimes t_i$, where $g$ is group-like and $s_i, t_i\in \ker \pi\otimes \ker \pi$.
Therefore,
$$(id\otimes\pi^e)\Delta(h_x)=(id\otimes\pi^e)(h_x\otimes 1+g\otimes h_x+h_g\otimes x+\sum_i(s_i\otimes h_{t_i}+h_{s_i}\otimes t_i))= h_x\otimes 1.$$
This completes the proof.
\end{proof}

\begin{proposition}
The algebra $R^e$ becomes a $(kG)^e$-module algebra via 
\begin{align*}
g\cdot y= y,\\
g\cdot \beta_y=\beta_y+g^{-1}\{g, y\},\\
h_g\cdot y=g^{-1}\{g, y\},\\
h_g\cdot \beta_y=\beta_{g^{-1}\{g, y\}}+\{g, \{y, g^{-1}\}\},
\end{align*}
for any $g\in G$ and $y\in R$. Moreover, the map $\Psi: R^e\rightarrow T$ is a $(kG)^e$-module map.
\end{proposition}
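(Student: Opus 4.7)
The plan is to construct the $(kG)^e$-action on $R^e$ through the universal property of $R^e$, and then to compare with the adjoint action of $(kG)^e\subset C^e$ on $\Psi(R^e)$ to identify $\Psi$ as a module map. Three identities will be used repeatedly: the commutator relation $[h_x,z]=\{x,z\}$ for $x,z\in C$ inside $C^e$; the consequence $h_1=0$ of the defining relations of $C^e$, which combined with $h_{gg^{-1}}=g^{-1}h_g+gh_{g^{-1}}$ forces $gh_{g^{-1}}=-g^{-1}h_g$ in $(kG)^e$; and $\{g^{-1},y\}=-g^{-2}\{g,y\}$, obtained by differentiating $g^{-1}g=1$. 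Lemma \ref{inR}(2) is what guarantees that the right-hand sides of the stated formulas actually lie in $R$, so all the expressions make sense in $R^e$.

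For each $g\in G$, I first produce $\sigma_g:=(g\cdot-):R^e\to R^e$ by applying the universal property of $R^e$ to $\gamma_g(y)=y$ and $\delta_g(y)=\beta_y+g^{-1}\{g,y\}$. The only delicate hypothesis is that $\delta_g$ is a Lie map, which reduces to the identity
\[
\{a,g^{-1}\{g,b\}\}-\{b,g^{-1}\{g,a\}\}=g^{-1}\{g,\{a,b\}\};
\]
this follows from the Poisson Jacobi identity after noting that the potential $R$-valued obstruction $\{a,g^{-1}\}\{g,b\}-\{b,g^{-1}\}\{g,a\}$ vanishes by commutativity of $R$. To produce $\tau_g:=(h_g\cdot-):R^e\to R^e$, which, because $h_g$ is $(g,g)$-skew primitive, must be a $(\sigma_g,\sigma_g)$-skew derivation, I would use the dual-numbers trick: apply the universal property of $R^e$ in the target $R^e[\epsilon]:=R^e\otimes_k k[\epsilon]/(\epsilon^2)$ to
\[
\tilde\gamma_g(y)=y+\epsilon\,g^{-1}\{g,y\},\qquad \tilde\delta_g(y)=\beta_y+g^{-1}\{g,y\}+\epsilon\bigl(\beta_{g^{-1}\{g,y\}}+\{g,\{y,g^{-1}\}\}\bigr).
\]
The induced algebra map $\tilde\Phi_g:R^e\to R^e[\epsilon]$ splits as $\sigma_g+\epsilon\,\tau_g$, which forces $\tau_g$ to be a $(\sigma_g,\sigma_g)$-skew derivation automatically and makes the module-algebra axiom a tautology.

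It then remains to verify the four defining relations of $(kG)^e$ on $R^e$: $\sigma_{gt}=\sigma_g\sigma_t$; $[\tau_g,\sigma_t]=0$ (coming from $[h_g,t]=\{g,t\}=0$); $[\tau_g,\tau_t]=0$ (from $h_{\{g,t\}}=0$); and $\tau_{gt}=\sigma_t\tau_g+\sigma_g\tau_t$ (from $h_{gt}=th_g+gh_t$). Both sides of each identity are either algebra maps or skew derivations of a single twist on $R^e$, so it suffices to compare them on the algebra generators $\alpha_y=y$ and $\beta_y$. The checks on $\alpha_y$ reduce directly to Lemma \ref{star}; the checks on $\beta_y$ involve second-order Poisson brackets such as $\{g,\{y,g^{-1}\}\}$ and are handled by the same mix of Jacobi, commutativity of $R$, and the three identities singled out in the first paragraph. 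I expect this bundle of verifications on $\beta_y$ to be the main, though not deep, source of work.

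Finally, for the statement that $\Psi$ is a $(kG)^e$-module map, it suffices to check $\Psi(a\cdot\xi)=(\mathrm{ad}\,a)(\Psi(\xi))$ on the four pairs $a\in\{g,h_g\}$, $\xi\in\{\alpha_y,\beta_y\}$, where $\mathrm{ad}$ denotes the left adjoint action of $(kG)^e$ on $C^e$. Each case is a short manipulation in $C^e$: for instance $g\cdot h_y=gh_yg^{-1}=h_y+g^{-1}\{g,y\}$, and $h_g\cdot y=gyh_{g^{-1}}+h_gyg^{-1}$ collapses to $g^{-1}\{g,y\}$ after applying $gh_{g^{-1}}=-g^{-1}h_g$ together with $\{g^{-1},y\}=-g^{-2}\{g,y\}$. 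Because the adjoint action is always a module-algebra action on $C^e$ and preserves the coinvariants $T$, this comparison simultaneously identifies $\Psi$ as a $(kG)^e$-module map and provides an independent conceptual check of the module-algebra axioms established above.
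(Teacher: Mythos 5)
Your proposal is correct, and its second half coincides with the paper's own argument: the paper also proves that $\Psi$ is a module map by computing the adjoint action $a\cdot z=a_1zS(a_2)$ of $g$ and $h_g$ on the generators $y=m_y$ and $h_y$ of $\Psi(R^e)\subset T$ and comparing with the stated formulas, using exactly the identities $h_1=0$, $gh_{g^{-1}}=-g^{-1}h_g$ and $\{g^{-1},y\}=-g^{-2}\{g,y\}$ that you isolate (the paper's formula $g\cdot h_y=h_y+g\{y,g^{-1}\}$ is your $h_y+g^{-1}\{g,y\}$). For the first half the paper simply says that one checks the relations (\ref{5.1})--(\ref{5.5}) of $(kG)^e$ on the generators of $R^e$ and vice versa, and omits all details; your route is a genuinely tidier packaging of that verification. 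By producing $\sigma_g$ from the universal property of $R^e$ (Proposition \ref{universal} via Proposition \ref{iso}) applied to the pair $(\gamma_g,\delta_g)$, and $\tau_g$ from the same universal property valued in $R^e\otimes k[\epsilon]/(\epsilon^2)$, you get for free that the action is defined on all of $R^e$ (not just on generators) and that the measuring axiom holds for the group-like $g$ and the $(g,g)$-skew primitive $h_g$, leaving only the four operator identities $\sigma_{gt}=\sigma_g\sigma_t$, $[\tau_g,\sigma_t]=0$, $[\tau_g,\tau_t]=0$, $\tau_{gt}=\sigma_t\tau_g+\sigma_g\tau_t$ to be checked on $\alpha_y,\beta_y$; what the paper calls ``relations of $R^e$ preserved by the generators of $(kG)^e$'' reappears in your setup as the hypotheses that $\delta_g$ and $\tilde\delta_g$ are Lie maps compatible with $\gamma_g$, $\tilde\gamma_g$. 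Two small caveats: the obstruction $\{a,g^{-1}\}\{g,b\}-\{b,g^{-1}\}\{g,a\}$ vanishes only after substituting $\{a,g^{-1}\}=g^{-2}\{g,a\}$ (then commutativity applies), and the second-order bracket identities behind $\tilde\delta_g$ being a Lie map (e.g. $\{g,\{\{a,b\},g^{-1}\}\}=\{a,\{g,\{b,g^{-1}\}\}\}-\{b,\{g,\{a,g^{-1}\}\}\}+2\{g^{-1}\{g,a\},g^{-1}\{g,b\}\}$) still need to be written out; they do hold, by the same Jacobi-plus-commutativity manipulations, and the paper omits this work as well, so this is not a gap relative to the paper's standard of proof.
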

\begin{proof}
By Lemma \ref{inR}, those defining equations make sense. To prove that $R^e$ is a $(kG)^e$-module algebra, it suffices to check that the relations $(\ref{5.1})$ - $(\ref{5.5})$ of $(kG)^e$ are preserved by the generators of $R^e$, and that the relations $(\ref{5.1})$ - $(\ref{5.5})$ of $R^e$ are preserved by the generators of $(kG)^e$ as well. For the sake of brevity, we omit the details.

To show that $\Psi: R^e\rightarrow T$ is a $(kG)^e$-module map, we have to explicitly find out the $(kG)^e$-action on $\Psi(R^e)$. As an algebra, $\Psi(R^e)$ is generated by $\Psi(\alpha_y)=m_y=y$ and $\Psi(\beta_y)=h_y$ where $y\in R$. Hence we only have to specify the $(kG)^e$-action on $y$ and $h_y$. Recall that the $(kG)^e$ action on $T$ is given by
$$a\cdot z=a_1zS(a_2),$$
for any $a\in (kG)^e$ and $z\in T$.
A direct calculation shows that
\begin{align*}
g\cdot y= y,\\
g\cdot h_y=h_y+g\{y, g^{-1}\},\\
h_g\cdot y=g^{-1}\{g, y\},\\
h_g\cdot h_y=h_{g^{-1}\{g, y\}}+\{g, \{y, g^{-1}\}\},
\end{align*}
for any $g\in G$ and $y\in R$. By comparing these with the $(kG)^e$-action on $R^e$ and noticing the fact that both $R^e$ and $T$ are $(kG)^e$-module algebras, we see that $\Psi$ is a $kG$-module map.
\end{proof}

By the previous proposition, $R^e$ is a $(kG)^e$-Hopf module algebra and thus one can form the algebra $R^e\#(kG)^e$. Also, the map $\Psi$ induces an algebra map $\Phi=\Psi\otimes Id: R^e\#(kG)^e\rightarrow T\#(kG)^e\cong C^e$.

There is a canonical algebra map $\gamma: R\#kG\rightarrow R^e\#(kG)^e$ induced by the inclusions $R\rightarrow R^e$ and $kG\rightarrow (kG)^e$. Now we define a linear map $\lambda: C=R\#kG\rightarrow R^e\#(kG)^e$ which sends $x\#g=xg$ to 
$$x\#h_g+\beta_x\#g+g\{x, g^{-1}\}\#g=(x\#1)(1\#h_g)+(1\#g)(\beta_x\#1)=xh_g+g\beta_x.$$
Note that the element $a\in R^e$ (resp. $b\in (kG)^e$) is identified with $a\#1\in R^e\#(kG)^e$ (resp. $1\#b\in R^e\#(kG)^e$).

\begin{proposition}\label{generation}
 Retain the above notation. Then the following are true.
\begin{enumerate}
\item[\textup{(1)}] The map $\lambda$ is a Lie map;
\item[\textup{(2)}] $\gamma(\{a, b\})=[\lambda(a), \gamma(b)]$ and $\lambda(ab)=\gamma(a)\lambda(b)+\gamma(b)\lambda(a)$, for any $a, b\in R\#kG$;
\item[\textup{(3)}] The algebra $R^e\#(kG)^e$ is generated by the image of $\gamma$ and $\lambda$.
\end{enumerate}
\end{proposition}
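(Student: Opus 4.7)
The plan is to verify (1) and (2) on pairs of algebra generators of $R\#kG$—namely pairs drawn from $R\cup G$—and then extend by bilinearity and the Leibniz rules proved along the way; (3) then follows by reading off generators on both sides.

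Three cases arise. When $a,b\in R$ the identities collapse to the defining relations $\beta_{\{x,y\}}=[\beta_x,\beta_y]$, $\beta_{xy}=\alpha_x\beta_y+\alpha_y\beta_x$, and $\alpha_{\{x,y\}}=[\beta_x,\alpha_y]$ of $R^e$, transferred verbatim to the subalgebra $R^e\#1\subseteq R^e\#(kG)^e$. When $a,b\in G$ they reduce to the analogous relations in $(kG)^e$, with the extra input that the bracket vanishes on group-likes: both sides of $\gamma\{g,t\}=[\lambda(g),\gamma(t)]$ and $\lambda\{g,t\}=[\lambda(g),\lambda(t)]$ are zero. The substantive case is the mixed one $a=g\in G$, $b=y\in R$. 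Using $\Delta(h_g)=g\otimes h_g+h_g\otimes g$, the brackets $[h_g,y]$ and $[h_g,\beta_y]$ in the smash product expand via the $(kG)^e$-action on $R^e$ from the preceding proposition, and matching them against $\gamma\{g,y\}$ and $\lambda\{g,y\}$ reduces to Lemma~\ref{star} together with the cancellation identity $g^{-1}\{g,x\}=g\{x,g^{-1}\}$ (obtained from $\{gg^{-1},x\}=0$).

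With the generator cases in hand, extension is routine. Both identities in~(2) are biadditive and each side satisfies a Leibniz rule in each argument—crucially, $\gamma(R\#kG)$ is commutative inside $R^e\#(kG)^e$ because the $G$-action on $R$ is trivial—so they extend to arbitrary $a,b\in R\#kG$. Identity~(1) then propagates from generators via~(2) and the Poisson Leibniz rule:
\[
[\lambda(a_1 a_2),\lambda(b)]=\gamma(a_1)[\lambda(a_2),\lambda(b)]+\gamma(a_2)[\lambda(a_1),\lambda(b)]+\gamma\{a_1,b\}\lambda(a_2)+\gamma\{a_2,b\}\lambda(a_1),
\]
which matches $\lambda\{a_1 a_2,b\}$. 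For~(3), since $R^e=\langle\alpha(R),\beta(R)\rangle$ and $(kG)^e=\langle G,h(kG)\rangle$ as algebras, and $R^e\#(kG)^e$ is generated by the two subalgebras $R^e\#1$ and $1\#(kG)^e$, the four families $\gamma(R)$, $\gamma(G)$, $\lambda(R)$, $\lambda(G)$ suffice. The main obstacle is the mixed case of~(1): matching $[h_g,\beta_y]$ to $\lambda\{g,y\}=\lambda(zg)$ with $z=g^{-1}\{g,y\}$ comes down to the identity $\{g,\{y,g^{-1}\}\}=g\{z,g^{-1}\}$, which is Lemma~\ref{star}(3) rewritten via the cancellation identity above—this is the delicate point where the $(kG)^e$-action on $R^e$ must genuinely encode the full Poisson structure of $C$.
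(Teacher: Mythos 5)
Your generator-case analysis is sound and, in the mixed case $a=g\in G$, $b=y\in R$, it hits exactly the same pressure point as the paper: after expanding $[h_g,\beta_y]$ in the smash product via $\Delta(h_g)=g\otimes h_g+h_g\otimes g$, the matching with $\lambda(\{g,y\})=\lambda((g*y)g)$ reduces to $g*(g*y)=\{g,\{y,g^{-1}\}\}$, which is Lemma~\ref{star}(3) (and Lemma~\ref{inR} is what makes $g*y\in R$, so that $\beta_{g*y}$ makes sense). Note, though, that your route differs from the paper's: the paper does not reduce to algebra generators at all; it verifies $\lambda(\{xg,yt\})=[\lambda(xg),\lambda(yt)]$ directly on the spanning set $\{xg: x\in R,\ g\in G\}$, using the commutation formulas $g\beta_x=\beta_xg+(g*x)g$ and $[h_g,\beta_y]=(g*y)h_g+\beta_{g*y}g+\{g,\{y,g^{-1}\}\}g$, and leaves part~(2) as a similar computation; part~(3) is handled exactly as you do. Since both sides of all the identities are bilinear, checking on this spanning set needs no extension step, which is what makes the paper's argument self-contained.

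The genuine weak point in your write-up is the claim that ``each side satisfies a Leibniz rule in each argument,'' so that the identities in~(2) ``extend to arbitrary $a,b$.'' For the identity $\lambda(ab)=\gamma(a)\lambda(b)+\gamma(b)\lambda(a)$ this is circular: the only Leibniz-type behaviour available for $\lambda$ on products \emph{is} this identity, so you cannot extend it in the first argument without already knowing it for pairs whose second entry is a non-generator, and the natural decomposition $\lambda((a_1a_2)b)=\lambda(a_1(a_2b))$ does not lower the total word length. Moreover your displayed propagation formula for~(1), and the extension of $\gamma(\{a,b\})=[\lambda(a),\gamma(b)]$ in the variable $a$, both presuppose this second identity in full, so the whole bootstrap rests on it. The gap is repairable without new ideas: either verify $\lambda(ab)=\gamma(a)\lambda(b)+\gamma(b)\lambda(a)$ directly on the spanning set $a=xg$, $b=yt$ (a short computation from $h_{gt}=gh_t+th_g$, $\beta_{xy}=x\beta_y+y\beta_x$ and the triviality of the $G$-action on $R$ — essentially the paper's ``part~(2) is similar''), or argue in two stages: first show, for $a$ a generator and $b=yt$ an arbitrary spanning element, that the identity holds by direct computation, and then check that
\begin{equation*}
D=\{a\in R\#kG:\ \lambda(ab)=\gamma(a)\lambda(b)+\gamma(b)\lambda(a)\ \text{for all } b\}
\end{equation*}
is a subalgebra (this uses the commutativity of $\gamma(R\#kG)$ that you correctly isolated), hence all of $R\#kG$. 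With that repaired, your extension of~(1) and of the first identity in~(2) goes through as you describe.
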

\begin{proof}
In the following calculation, we denote $g^{-1}\{g, x\}=-g\{g^{-1}, x\}$ by $g*x$ for any $x\in R$ and $g\in G$. 
To show that $\lambda$ is a Lie map, it suffices to show that 
\begin{equation*}
\lambda(\{ xg,yt \})=[\lambda(xg), \lambda(yt)],
\end{equation*}
for any $x, y\in R$ and $g, t\in G$. 

Notice that we have the following formulae,
\begin{align*}
g\beta_x=&\,\,\beta_xg+(g*x)g\\
[h_g,\beta_y]=&\,\,(g*y)h_g+\beta_{g*y}g+ \{g, \{y, g^{-1}\}\}g.
\end{align*}

Now, 
\begin{align*}
\lambda(\{ xg,yt \})=&\,\,\lambda(\{x, y\}gt - (t*x)ygt+x(g*y)gt)\\
=&\,\,\{x, y\}h_{gt} +gt\beta_{\{x, y\}}- (t*x)yh_{gt}-gt\beta_{(t*x)y}\\
 &\,\,+ x(g*y)h_{gt}+gt\beta_{x(g*y)}.
\end{align*}

On the other hand,
\begin{align*}
[\lambda(xg), \lambda(yt)]=&\,\, [xh_g+g\beta_x, yh_t+t\beta_y]\\
=&\,\, x[h_g, y]h_t+y[x, h_t]h_g +t[x, \beta_y]h_g+tx[h_g, \beta_y]+x[h_g, t]\beta_y\\
&+g[\beta_x, y]h_t+yg[\beta_x, h_t]+y[g, h_t]\beta_x+t[g, \beta_y]\beta_x+tg[\beta_x, \beta_y]+g[\beta_x, t]\beta_y.
\end{align*}
By comparing the two equations, it amounts to show the following equality.
\begin{align}\label{simplify}
  &-(t*x)yh_{gt}-gt\beta_{(t*x)y}+ x(g*y)h_{gt}+gt\beta_{x(g*y)}\\
\nonumber=& x[h_g, y]h_t+y[x, h_t]h_g+tx[h_g, \beta_y]+yg[\beta_x, h_t]+t[g, \beta_y]\beta_x+g[\beta_x, t]\beta_y.
\end{align}

The left-hand side of (\ref{simplify}) becomes
\begin{align*}
&-(t*x)ygh_{t}-(t*x)yth_{g}-gty\beta_{t*x}-gt(t*x)\beta_{y}\\
&+x(g*y)gh_{t}+x(g*y)th_{g}+gtx\beta_{g*y}+gt(g*y)\beta_{x}\\
=&-(t*x)ygh_{t}-(t*x)yth_{g}+x(g*y)gh_{t}+x(g*y)th_{g}\\
&-y\beta_{t*x}gt-y((gt)*(t*x))gt-(t*x)\beta_ygt-(t*x)((gt)*y)gt\\
&+x\beta_{g*y}gt+x((gt)*(g*y))gt+(g*y)\beta_xgt+(g*y)((gt)*x)gt.
\end{align*}

Notice that $[h_g, y]=\{g, y\}=(g*y)g$. Hence the right-hand side of (\ref{simplify}) becomes
\begin{align*}
&x(g*y)gh_t- y(t*x)th_g\\
&+tx(g*y)h_g+x\beta_{g*y}tg+ x(t*(g*y))tg+ tx\{g, \{y, g^{-1}\}\}g\\
&-gy(t*x)h_t-y\beta_{t*x}tg- y(g*(t*x))tg- gy\{t, \{x, t^{-1}\}\}t\\
&+(g*y)\beta_xgt+(g*y)(t*x)gt+(g*y)(g*x)gt\\
&-(t*x)\beta_ygt-(t*x)(g*y)gt-(t*x)(t*y)gt.
\end{align*}

Consequently, it amounts to show that 
\begin{align*}
&-y((gt)*(t*x))+x((gt)*(g*y))\\
=&\,\,x(t*(g*y))tg- y(g*(t*x))tg+x\{g, \{y, g^{-1}\}\}- y\{t, \{x, t^{-1}\}\}
\end{align*}
which is true by Lemma \ref{star}. Therefore $\lambda$ is a Lie map. Part (2) can be proved similarly and we leave it to the reader. 

As an algebra, $R^e\#(kG)^e$ is generated by elements of the forms
$$x\#1, \beta_y\#1, 1\# g, 1\# h_t,$$
where $x, y\in R$ and $g, t\in G$. Obviously, $x\#1$ and $1\# g$ are in the image of $\gamma$. Also, $\beta_y\#1=\lambda(y\#1)$ and $1\#h_t=\lambda(1\#t)$. This completes the proof.
\end{proof}

\begin{proposition}\label{mapPhi}
Retain the above notation. Then the diagram
\[
\begin{CD}
R\#kG@>\gamma>> R^e\#(kG)^e@<\lambda<< R\#kG\\
@V=VV   @V\Phi VV   @VV=V\\
R\#kG@>m>> T\#(kG)^e@<h<< R\#kG\\
\end{CD}
\]
commutes. Moreover, the map $\Phi$ is the unique map that makes this diagram commutes. 
\end{proposition}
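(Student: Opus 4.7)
The plan is to verify commutativity of the two triangles by checking on generators of $R\#kG$, and then to deduce uniqueness from Proposition \ref{generation}(3). The map $\Phi=\Psi\otimes\mathrm{Id}$ is an algebra homomorphism from $R^e\#(kG)^e$ to $T\#(kG)^e\cong C^e$ because $\Psi$ is a $(kG)^e$-module algebra map, and this multiplicativity of $\Phi$ is essentially all that the verification requires.

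For the left triangle $\Phi\circ\gamma=m$, both sides are algebra maps $R\#kG=C\to C^e$, so it suffices to check equality on the generating sets $\{x\#1:x\in R\}$ and $\{1\#g:g\in G\}$. By the definitions of $\gamma$, $\Psi$ and $\iota^e$, one has $\Phi\gamma(x\#1)=\Psi(\alpha_x)=m_x=m(x\#1)$ and $\Phi\gamma(1\#g)=\iota^e(g)=g=m(1\#g)$, settling this triangle.

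For the right triangle $\Phi\circ\lambda=h$, I use the compact form $\lambda(xg)=xh_g+g\beta_x$ from the construction. Since $\Phi$ is multiplicative and sends $x\mapsto x$, $g\mapsto g$, $\beta_x\mapsto h_x$ and $h_g\mapsto h_g$ under the identification $T\#(kG)^e\cong C^e$, I compute
\[
\Phi(\lambda(xg))=\Phi(x)\Phi(h_g)+\Phi(g)\Phi(\beta_x)=xh_g+gh_x
\]
in $C^e$. The defining relation $h_{ab}=m_bh_a+m_ah_b$ of the Poisson enveloping algebra then gives $h(xg)=h_{xg}=gh_x+xh_g$, which matches the expression above.

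Uniqueness of $\Phi$ is immediate from Proposition \ref{generation}(3): any algebra homomorphism out of $R^e\#(kG)^e$ is determined by its restrictions to the images of $\gamma$ and $\lambda$, which together generate the algebra, so $\Phi$ is the only such map. The one conceptual subtlety, which is already absorbed into the tidy formula $\lambda(xg)=xh_g+g\beta_x$, is that the cross term $g^{-1}\{g,x\}\#g$ hidden in the expanded form of $g\cdot\beta_x\in R^e\#(kG)^e$ is exactly what is needed to reconcile the commutator $[h_x,g]=\{x,g\}$ in $C^e$ with the clean identity $h_{xg}=gh_x+xh_g$; this bookkeeping never surfaces once one works directly with $xh_g+g\beta_x$ and the algebra-map property of $\Phi$.
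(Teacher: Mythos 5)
Your proposal is correct and follows the same route as the paper: the paper simply notes that commutativity is checked directly from the definitions of $\gamma$ and $\lambda$ (using that $\Phi$ is an algebra map and the relation $h_{xg}=m_g h_x+m_x h_g$, exactly as you do) and that uniqueness follows from Proposition \ref{generation}(3). Your write-up just fills in the generator-by-generator verification that the paper leaves implicit, including the correct observation that the cross term $g^{-1}\{g,x\}\#g$ in $g\beta_x$ accounts for the commutator $[h_x,g]=\{x,g\}$ in $C^e$.
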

\begin{proof}
The commutativity of the diagram is easy to check by the definitions of $\gamma$ and $\lambda$. The uniqueness of the map follows from Proposition \ref{generation} (3).
\end{proof}

Putting things together, we have the following theorem.
\begin{theorem}\label{Decomp}
Let $B$ be a pointed Poisson Hopf algebra with group-like elements $G$ such that $\{g, t\}=0$ for any $g, t\in G$. Then the following statements are true.
\begin{enumerate}
\item[\textup{(1)}] $C=\gr B$ is a graded Poisson Hopf algebra. 
\item[\textup{(2)}] $R^e\#(kG)^e\cong C^e$, where $C\cong R\#kG$ is the biproduct decomposition of $C$. The isomorphism is given by the map $\Phi$ as in Proposition \ref{mapPhi}.

\end{enumerate}
\end{theorem}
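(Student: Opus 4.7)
The plan is to dispatch part (1) by invoking Proposition \ref{gradedPoisson} directly, and to prove part (2) by constructing a two-sided inverse to $\Phi$ via the universal property of the Poisson enveloping algebra. All the nontrivial Poisson-theoretic content is already packaged into Proposition \ref{generation}; what remains is a clean formal argument.

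Recall from \cite[Definition 1]{Oh2} (the universal property used implicitly in Propositions \ref{universal} and \ref{iso}) that giving an algebra map out of $C^e$ into an associative algebra $E$ is equivalent to giving an algebra map $\gamma' : C \to E$ and a Lie algebra map $\lambda' : C \to E$ such that
\begin{equation*}
\gamma'(\{a,b\}) = [\lambda'(a), \gamma'(b)] \quad \text{and} \quad \lambda'(ab) = \gamma'(a)\lambda'(b) + \gamma'(b)\lambda'(a).
\end{equation*}
The maps $\gamma, \lambda : C = R\#kG \to R^e\#(kG)^e$ defined just before Proposition \ref{generation} satisfy exactly these compatibilities by parts (1) and (2) of that proposition. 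The universal property therefore produces a unique algebra homomorphism
\begin{equation*}
\Theta : C^e \longrightarrow R^e \# (kG)^e
\end{equation*}
with $\Theta \circ m = \gamma$ and $\Theta \circ h = \lambda$.

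Next I would check that $\Theta$ inverts $\Phi$ on both sides. For $\Theta \circ \Phi$: by the commutative diagram of Proposition \ref{mapPhi} we have $\Phi \circ \gamma = m$ and $\Phi \circ \lambda = h$, so
\begin{equation*}
\Theta \circ \Phi \circ \gamma = \Theta \circ m = \gamma, \qquad \Theta \circ \Phi \circ \lambda = \Theta \circ h = \lambda.
\end{equation*}
By Proposition \ref{generation}(3), the algebra $R^e \# (kG)^e$ is generated by the images of $\gamma$ and $\lambda$, and both $\Theta \circ \Phi$ and the identity are algebra endomorphisms agreeing on this generating set, hence they coincide. For $\Phi \circ \Theta$: both maps are algebra endomorphisms of $C^e$ with $(\Phi\circ\Theta)\circ m = \Phi\circ\gamma = m$ and $(\Phi\circ\Theta)\circ h = \Phi\circ\lambda = h$, so the universal property of $C^e$ forces $\Phi \circ \Theta = \text{id}_{C^e}$.

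The main obstacle, already disposed of in the preceding lemmas, is verifying that $\lambda$ is a Lie map and that the Poisson-enveloping compatibility holds between $\gamma$ and $\lambda$; this rests on the identities of Lemma \ref{star} together with the explicit $(kG)^e$-action on $R^e$. Once those are in hand, the isomorphism $\Phi$ drops out as a purely formal consequence of the universal property and the generation statement of Proposition \ref{generation}(3).
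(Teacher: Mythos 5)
Your proposal is correct and is essentially the paper's argument spelled out: the paper's own proof of part (2) simply cites Proposition \ref{mapPhi} together with the universal property of $C^e\cong T\#(kG)^e$, which is exactly the mechanism you use — the compatibilities in Proposition \ref{generation}(1)(2) yield the map $\Theta$ out of $C^e$, the generation statement \ref{generation}(3) gives $\Theta\circ\Phi=\mathrm{id}$, and uniqueness in the universal property gives $\Phi\circ\Theta=\mathrm{id}$. Part (1) is handled identically in both, by Proposition \ref{gradedPoisson}.
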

\begin{proof}
Part (1) is just Proposition \ref{gradedPoisson}.
Part (2) follows from Proposition \ref{mapPhi} and the universal property of $C^e\cong T\#(kG)^e$.
\end{proof}

For the completion of our discussion, we study the structures of $(kG)^e$ and $R^e$ for the rest of the section. 

\subsection{The structure of $(kG)^e$} Let $G$ be a free abelian group of rank $\ell$ with generators $g_1, ...., g_\ell$. Then $kG$ becomes a Poisson Hopf algebra with $\{a, b\}=0$ for any $a, b\in G$. By definition, $(kG)^e$ is generated by $g_1, ...., g_\ell$ and $h_{g_1}, ..., h_{g_\ell}$. 

Let $\mathfrak{g}$ be the abelian Lie algebra of dimension $\ell$ and let $G$ acts trivially on $\mathfrak{g}$. Then we can form the Hopf algebra $S(\mathfrak{g})\otimes kG$.

\begin{proposition}\label{kGE}
Let $G$ be a free abelian group of rank $\ell$ and equip $kG$ with the trivial Poisson bracket. Then 
$$(kG)^e\cong S(\mathfrak{g})\otimes kG$$
as Hopf algebras,
where $\mathfrak{g}$ is the abelian Lie algebra of dimension $\ell$ with trivial $G$-action.
\end{proposition}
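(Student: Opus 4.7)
The plan is to construct an explicit Hopf algebra homomorphism from $S(\mathfrak{g})\otimes kG$ onto $(kG)^e$ and then use the Poincar\'e--Birkhoff--Witt theorem (Corollary \ref{PBW}) to check that it is an isomorphism.

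First I would unpack the defining relations (\ref{5.1})--(\ref{5.5}) of $(kG)^e$ in the special case of trivial bracket. Since $\{x,y\}=0$ for all $x,y\in kG$, the relations force $h_xh_y=h_yh_x$ and $h_xm_y=m_yh_x$, so the $h$'s commute among themselves and centralize $kG$. The relation $h_{xy}=m_yh_x+m_xh_y$ says $h$ is a derivation of $kG$, and in particular $h_1=0$ and $h_{g_i^{-1}}=-g_i^{-2}h_{g_i}$. Hence $(kG)^e$ is generated as an algebra by $g_1^{\pm 1},\dots,g_\ell^{\pm 1}$ together with $h_{g_1},\dots,h_{g_\ell}$.

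Next I would produce primitive elements. By Theorem \ref{Hopfstr}, $\Delta(h_{g_i})=g_i\otimes h_{g_i}+h_{g_i}\otimes g_i$, so setting $x_i:=g_i^{-1}h_{g_i}$ gives $\Delta(x_i)=1\otimes x_i+x_i\otimes 1$. The $x_i$'s commute pairwise and commute with every $g_j$ by the centrality observed above. Fixing a basis $e_1,\dots,e_\ell$ of $\mathfrak{g}$, the assignment $e_i\mapsto x_i$ and $g_j\mapsto g_j$ therefore extends to a well-defined Hopf algebra map
\begin{equation*}
\Phi\colon S(\mathfrak{g})\otimes kG\longrightarrow (kG)^e,
\end{equation*}
where the tensor is really the smash product with the trivial $G$-action on $\mathfrak{g}$.

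Finally I would show $\Phi$ is bijective. Surjectivity is immediate since $g_i=\Phi(g_i)$ and $h_{g_i}=\Phi(g_i\otimes e_i)$ generate $(kG)^e$. For injectivity I would invoke Corollary \ref{PBW}: because $kG\cong k[g_1^{\pm 1},\dots,g_\ell^{\pm 1}]$ is smooth, the K\"ahler differentials $\Omega_{kG}$ are free of rank $\ell$ on $dg_1,\dots,dg_\ell$, whence $(kG)^e$ is free over $kG$ with PBW basis $\{h_{g_1}^{a_1}\cdots h_{g_\ell}^{a_\ell}\}_{a\in\mathbb{Z}_{\ge 0}^\ell}$. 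On the other hand $\{g^\alpha e_1^{a_1}\cdots e_\ell^{a_\ell}\}_{\alpha\in\mathbb{Z}^\ell,\,a\in\mathbb{Z}_{\ge 0}^\ell}$ is a $k$-basis of $S(\mathfrak{g})\otimes kG$, and $\Phi$ sends it to $\{g^{\alpha-a}h_{g_1}^{a_1}\cdots h_{g_\ell}^{a_\ell}\}$; since $(\alpha,a)\mapsto(\alpha-a,a)$ is a bijection of $\mathbb{Z}^\ell\times\mathbb{Z}_{\ge 0}^\ell$, $\Phi$ carries a $k$-basis to a $k$-basis and is hence an isomorphism.

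The only mildly subtle point is the twist $x_i=g_i^{-1}h_{g_i}$: one must switch from the naturally available skew-primitive generators $h_{g_i}$ to honestly primitive ones in order to match the Hopf structure on $S(\mathfrak{g})\otimes kG$; once this is done, everything else reduces to commutativity checks from the trivial bracket and the PBW basis.
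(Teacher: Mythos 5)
Your proof is correct, but it takes a different route from the paper's at the key step. The paper also defines the map on generators, but it proves bijectivity without any basis count: it constructs an algebra map $\alpha\colon kG\to S(\mathfrak{g})\otimes kG$, $g\mapsto 1\otimes g$, and a ``Lie/derivation'' map $\beta\colon kG\to S(\mathfrak{g})\otimes kG$, $g_1^{i_1}\cdots g_\ell^{i_\ell}\mapsto \sum_j i_j\,y_j\otimes g_1^{i_1}\cdots g_j^{i_j-1}\cdots$, checks the compatibilities $\alpha(\{a,b\})=[\beta(a),\alpha(b)]$ and $\beta(ab)=\alpha(a)\beta(b)+\alpha(b)\beta(a)$, and then invokes the universal property of $(kG)^e$ (as in Proposition \ref{iso}) to conclude that the map $\Xi$ in the other direction is invertible. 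You instead prove surjectivity on generators and injectivity by matching the monomial basis of $S(\mathfrak{g})\otimes kG$ with the PBW basis $\{g^{\beta}h_{g_1}^{a_1}\cdots h_{g_\ell}^{a_\ell}\}$ coming from Corollary \ref{PBW}; this is heavier machinery than the paper needs (though the same deduction from Corollary \ref{PBW} to a free $B$-basis is made in the proof of Proposition \ref{Lie}, so it is legitimate here), while the paper's argument is self-contained given the universal property. One point in your favor: your twist $x_i=g_i^{-1}h_{g_i}$ addresses the coalgebra compatibility honestly, since $\Delta(h_{g_i})=g_i\otimes h_{g_i}+h_{g_i}\otimes g_i$ by Theorem \ref{Hopfstr}, so $h_{g_i}$ itself is not primitive; the paper's $\Xi$, which sends the primitive $y_i\otimes 1$ to $h_{g_i}$, is a Hopf map only after exactly this kind of adjustment (or a corresponding twist of the Hopf structure on $S(\mathfrak{g})\otimes kG$), a subtlety the paper's proof passes over. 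Minor quibbles only: your tensor factors are written in the order $g_i\otimes e_i$ rather than $e_i\otimes g_i$, and the passage from $\gr_F(kG)^e\cong S_{kG}(\Omega_{kG})$ to the monomial $kG$-basis deserves the one-line filtered-graded remark, but neither affects correctness.
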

\begin{proof}
Let $\alpha$ be the algebra map $kG\rightarrow S(\mathfrak{g})\otimes kG$ sending $g$ to $1\otimes g$. Let $\{g_1, ...., g_\ell\}$ be a set of free generators of $G$ and $\{y_1, ...., y_\ell\}$ be a basis of $\mathfrak{g}$. By the defining relations of $(kG)^e$, there is a unique algebra map $\Xi: S(\mathfrak{g})\otimes kG$ to $(kG)^e$ sending $1\otimes g$ to $g$ and $y_i\otimes1$ to $h_{g_i}$. The map $\Xi$ is actually a Hopf algebra map.

Define a linear map $\beta: kG\rightarrow S(\mathfrak{g})\otimes kG$ sending $g_1^{i_1}g_2^{i_2}\cdots g_\ell^{i_\ell}$ to 
\begin{equation}
\sum_{j=1}^\ell i_jy_j\otimes g_1^{i_1}\cdots g_j^{i_j-1}\cdots 
\end{equation}

Clearly, $\beta$ is a Lie algebra map. Moreover, we have $\alpha(\{a, b\})=[\beta(a), \alpha(b)]$ and $\beta(ab)=\alpha(a)\beta(b)+\alpha(b)\beta(a)$ for any $a, b\in kG$.
Now $\Xi$ is the unique map that makes the following diagram commute
\[
\begin{CD}
kG@>\alpha>> S(\mathfrak{g})\otimes kG@<\beta<< kG\\
@V=VV   @V\Xi VV   @VV=V\\
kG@>m>> (kG)^e@<h<< kG\,.\\
\end{CD}
\]
Therefore, by the universal property of $(kG)^e$, the map $\Xi$ is an isomorphism.
\end{proof}

\subsection{The structure of $R^e$} The algebra $R$ is a braided graded Hopf algebra in the Yetter-Drinfeld category $\mathcal{M}^G_G$. Since the Hopf algebra $C=\gr B$ is commutative, the braiding on $R\otimes R$ is trivial, i.e. the braiding sends $x\otimes y$ to $y\otimes x$. Therefore $R$ is actually a Hopf algebra in the usual sense. Also, as a coalgebra, $R$ is connected and coradically graded. Putting all these together, we have
\begin{proposition}\label{Rpolynomial}
 Retain the above notation and assume that the base field $k$ has characteristic $0$. If $R$ is finitely generated, then $R$ is isomorphic to a polynomial algebra of finitely many variables. Consequently, $R^e$ has a basis of the form
 $$x_1^{j_1}\cdots x^{j_m}_m \beta^{t_1}_{x_1}\cdots \beta^{t_m}_{x_m},$$
 where $\{x_1, \cdots, x_m\}\subset R^+$ is a minimal generating set of $R$.
\end{proposition}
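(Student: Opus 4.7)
The plan has two clean steps: first identify $R$ as a polynomial algebra, and then read off the basis of $R^e$ from the PBW theorem (Corollary \ref{PBW}).

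As noted in the paragraph preceding the proposition, $R$ is a connected, coradically graded Hopf algebra, and because the ambient Hopf algebra $C=\gr B$ is commutative, the braiding on $R\otimes R$ is trivial, so $R$ is genuinely a commutative graded connected Hopf algebra in the usual sense. The first step is to invoke the classical structure theorem for such Hopf algebras over a field of characteristic zero (due to Leray, and also derivable by duality from Milnor--Moore, or from Cartier's smoothness theorem for commutative Hopf algebras in characteristic zero): any connected graded commutative Hopf algebra in characteristic zero is isomorphic, as an algebra, to a symmetric algebra on a space of generators that can be identified with $R^+/(R^+)^2$. A standard graded Nakayama argument (using $R_0=k$) shows that any homogeneous lift of a basis of $R^+/(R^+)^2$ to $R^+$ is a minimal generating set of $R$, and that the cardinality of any such set equals $\dim_k R^+/(R^+)^2$. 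Since $R$ is finitely generated by hypothesis, this dimension is some finite number $m$, and fixing a minimal generating set $\{x_1,\dots,x_m\}\subset R^+$ yields $R\cong k[x_1,\dots,x_m]$.

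For the second step, once $R$ is a polynomial algebra, its K\"ahler differentials $\Omega_R$ form a free $R$-module with basis $dx_1,\dots,dx_m$. Corollary \ref{PBW} then produces an $R$-algebra isomorphism
\[
S_R(\Omega_R)\;\cong\;\gr_F R^e,
\]
and $S_R(\Omega_R)=R[dx_1,\dots,dx_m]$ has the $k$-basis of ordered monomials $x_1^{j_1}\cdots x_m^{j_m}(dx_1)^{t_1}\cdots (dx_m)^{t_m}$. Under the isomorphism of Proposition \ref{iso}, the class of $dx_i$ in $\gr_F R^e$ corresponds to the class of $\beta_{x_i}=h_{x_i}\in F_1 R^e$. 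A routine lifting argument (choose the ordered monomial $x_1^{j_1}\cdots x_m^{j_m}\beta_{x_1}^{t_1}\cdots \beta_{x_m}^{t_m}$ as a preimage of the corresponding basis element of $\gr_F R^e$, and use that a family of elements in a filtered vector space is a basis whenever its principal symbols form a basis of the associated graded) gives the claimed $k$-basis of $R^e$.

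The main obstacle is really only the first step: pinning down a precise characteristic-zero structure theorem for connected graded commutative Hopf algebras that is strong enough to give a polynomial (not merely free graded-commutative) algebra. In the strictly commutative setting considered here there are no exterior generators, so the free graded-commutative algebra is automatically polynomial; but this needs to be stated carefully. Once $R\cong k[x_1,\dots,x_m]$ is established, the passage to the basis of $R^e$ is a direct application of Corollary \ref{PBW} together with a standard associated-graded lifting, and requires no further delicate analysis.
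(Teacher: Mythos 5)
Your proposal is correct and follows essentially the same two-step route as the paper: first establish that $R$ is a polynomial algebra, then apply Corollary \ref{PBW} (equivalently \cite[Theorem 3.7]{OPS}) and lift the monomial basis of $\gr_F R^e\cong S_R(\Omega_R)$ to $R^e$. The only difference is the source for the first step: the paper simply cites Ion's theorem \cite[Theorem 1.13]{I} for symmetrically braided connected coradically graded Hopf algebras, whereas you invoke the classical characteristic-zero structure theory (Leray/Milnor--Moore/Cartier); your added remark that strict commutativity plus Cartier reducedness excludes exterior generators correctly handles the one subtlety in that substitution, and your graded Nakayama and filtered-to-graded lifting arguments are the routine details the paper leaves implicit.
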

\begin{proof}
As just discussed, $R$ is a connected Hopf algebra and is coradically graded. Hence $R$ is a polynomial algebra by \cite[Theorem 1.13]{I}. The second part follows from Proposition \ref{PBW} or \cite[Theorem 3.7]{OPS}.
\end{proof}

\vspace{1.5 cm}


\begin{thebibliography}{99}
\bibitem[AM]{AM} A. L. Agore and G. Militaru, {\it The global extension problem, crossed products and co-flag non-commutative Poisson algebras}, arXiv:1309.1986.

\bibitem[AC]{AC} N. Andruskiewitsch and J. Cuadra, {\it On the structure of (co-Frobenius) Hopf algebras}, arXiv: 1011.3457v3.


\bibitem[AD]{AD} N. Andruskiewitsch and S. D\u{a}sc\u{a}lescu, {\it Co-Frobenius Hopf algebras and the coradical filtration}, Math. Z.  243 (2003), 145-154.
\bibitem[BOZZ]{BOZZ} K.A. Brown, S. O'Hagan, J.J. Zhang and G. Zhuang, {\it Connected Hopf algebras and iterated Ore extensions}, submitted for publication, arXiv:1308.1998, 2013.



\bibitem[EK]{EK} P. Etingof and D. Kazhdan, {\it Quantization of Poisson algebraic groups and Poisson homogeneous spaces}, Sym\'etries quantiques (Les Houches, 1995), 935-946, North-Holland, Amsterdam, 1998. 

\bibitem[GZ]{GZ}  K. R. Goodearl and J. J. Zhang, {\it Noetherian Hopf algebra domains of Gelfand-Kirillov dimension two}, J. Algebra 324(11) (2010), 3131-3168.

\bibitem[H]{H} J. Huebschmann, {\it Poisson cohomology and quantization}, J. Reine Angew. Math. 408 (1990) 57-113.
\bibitem[I]{I} B. Ion, {\it Relative PBW type theorems for symmetrically braided Hopf algebras}, Comm. Algebra 39 (2011), no. 7, 2508-2518.   

\bibitem[KL]{KL} G.R. Krause and T.H. Lenagan, {\it Growth of algebras and Gelfand-Kirillov dimension} (revised edition), American Mathematical Society, Providence, RI, 2000.

\bibitem[KS]{KS} L.I. Korogodski and Y.S. Soibelman, {\it Algebras of functions on quantum groups, Part I}, Mathematical surveys and monographs, 56, American Mathematical Society, Providence, RI, 1998.



\bibitem[Mo]{Mo} S. Montgomery, {\it Hopf algebra and their actions on rings}, CBMS Regional Conference Series in Mathematics, 82, Providence, RI, 1993.

\bibitem[Oh1]{Oh1} S.-Q. Oh, {\it Poisson enveloping algebras}, Comm. Algebra 27 (1999), 2181-2186.

\bibitem[Oh2]{Oh2} S.-Q. Oh, {\it Hopf structure for Poisson enveloping algebras}, Contributions to Algebra and Geometry 44(2) (2003),  567-574.

\bibitem[Oh3]{Oh3} S.-Q. Oh, {\it Poisson polynomial rings}, Comm. Algebra 34 (2006), 1265-1277.

\bibitem[OPS]{OPS} S.-Q. Oh, C.-G Park and Y.-Y Shin, {\it A Poincar\'e-Birkhoff-Witt theorem for Poisson enveloping algebras}, Comm. Algebra 30(10) (2002), 4867-4887.

\bibitem[Ra1]{Ra1} D. E. Radford, {\it Operators on Hopf algebras}, Amer. J. Math. 99(1) (1977), 139-158.

\bibitem[Ra2]{Ra3} D. E. Radford, {\it Hopf algebras with projection}, J. Algebra 92 (1985), 322-347.
\bibitem[Ri]{Ri} G. S. Rinehart, {\it Differential forms on general commutative algebras}, Trans. Amer. Math. Soc. 108 (1963), 195-222.

\bibitem[S]{Sch} P. Schauenburg, {\it Hopf-Galois and Bi-Galois Extensions}, Galois theory, Hopf algebras, and semiabelian categories, 469-515, Fields Inst. Commun., 43, Amer. Math. Soc., Providence, RI, 2004.

\bibitem[Sch]{Schn} H. J. Schneider, {\it Some remarks on exact sequence of quantum groups}, Comm. Algebra 21(9) (1993), 3337-3357. 



\bibitem[U]{U} U. Umirbaev, {\it Universal enveloping algebras and universal derivations of Poisson algebras}, J. Algebra 354 (2012), 77-94.

\bibitem[We]{We}
C.A. Weibel,
{\it An Introduction to Homological Algebra}, Cambridge
Studies in Advanced Mathematics, 38. Cambridge University Press,
Cambridge, 1994.

\bibitem[WZZ]{WZZ}
D.-G. Wang, J.J. Zhang and G. Zhuang,
{\it Connected Hopf algebras of
GK-dimension four}, Trans. Amer. Math. Soc. (to appear), arXiv: 1302.2270.
\bibitem[WW]{WW} L. Wang and X. Wang, {\it Classification of connected Hopf algebra algebras of dimension $p^3$ I}, preprints, arXiv:1309.0286.
\bibitem[ZVZ]{ZVZ} C. Zhu, F. Van Oystaeyen and Y. Zhang, {\it On the (co)homology of Frobenius Poisson algebras}, arXiv:1305.1748.

\bibitem[Zh]{Zh1} 
G. Zhuang, {\it Properties of connected Hopf algebras of finite 
Gelfand-Kirillov dimension}, 
J. London Math. Soc., 87(2) (2013), 877-898.

\end{thebibliography}
\end{document}